\documentclass[11pt,a4paper,reqno]{amsart}
\usepackage{graphicx} 
\usepackage{amssymb}
\usepackage{amsthm}
\usepackage{booktabs}
\usepackage{pdflscape}
\usepackage[a4paper,left=2.5cm,right=2.5cm,top=2.8cm,bottom=2.5cm]{geometry}

\usepackage[foot]{amsaddr}

\usepackage[numbers,sort&compress]{natbib}
\bibpunct[, ]{[}{]}{,}{n}{,}{,}
\renewcommand\bibfont{\fontsize{10}{12}\selectfont}

\usepackage{amsmath} 
\usepackage{caption}
\usepackage{algorithmic}
\usepackage[ruled,linesnumbered,vlined]{algorithm2e}
\usepackage{hyperref}
\usepackage{multirow}
\usepackage{subcaption}
\newdimen\figrasterwd
\figrasterwd\textwidth
\usepackage[shortlabels]{enumitem}
\usepackage{xcolor}
\usepackage{cleveref}
\usepackage{enumitem}
\newcommand{\mscLink}[1]{#1}

\newtheorem{theorem}{Theorem}[section]
\newtheorem{lemma}[theorem]{Lemma}
\newtheorem{corollary}[theorem]{Corollary}
\newtheorem{proposition}[theorem]{Proposition}

\newtheorem{definition}[theorem]{Definition}

\newtheorem{assumption}{Assumption}

\allowdisplaybreaks
\usepackage{tikz}
\usetikzlibrary{calc}
\usepackage{pgfplots}
\pgfplotsset{compat=newest}
\usetikzlibrary{arrows,arrows.meta,intersections,shapes.geometric,positioning,calc,patterns}

\begin{document}
\title[A constraint dissolving method for problems with geometric constraints]{A constraint dissolving inexact penalty method for optimization problems with geometric constraints}




\author{Xiaoxi Jia$^1$}\address{$^1$Faculty of Mathematics, Chemnitz University of Technology, Germany\\ Mail: \textsc{\{xiaoxi.jia,manuel.schaller\}@math.tu-chemnitz.de}}
\author{Leander Lerch$^2$}\address{$^2$Faculty of Electrical Engineering and Information Technology, Chemnitz University of Technology, Germany\\ Mail: Mail: \textsc{\{leander.lerch,stefan.streif\}@etit.tu-chemnitz.de}}
\author{Stefan Streif$^2$}
\author{Manuel Schaller$^1$}

\thanks{The authors acknowledge funding by the European Social Fund Plus (ESF Plus) and the Free State of Saxony through the project MORE-KIBA}

\maketitle
{
\small\textbf{\abstractname.}
Optimization problems with geometric constraints have a broad range of applications, including machine learning, finance, and control. A powerful algorithmic tool to resolve these geometric constraints are constraint dissolving methods.
To this end, we propose a framework for constraint dissolving mappings for nonconvex geometric constraints. 
Leveraging these, we develop a constraint dissolving inexact penalty method to solve 
optimization problems with general set-membership constraints and possibly nonconvex geometric constraints. 
We establish the convergence of the proposed algorithm and prove that every feasible accumulation point is Mordukhovich stationary. Notably, we rely only on mild asymptotic Mordukhovich regularity, which is significantly weaker than the constraint qualifications adopted in the existing literature on constraint dissolving methods. Numerical experiments addressing classical equality-, complementarity-, sparsity-, and low-rank constrained optimization problems demonstrate that the proposed method is competitive with the safeguarded augmented Lagrangian method in terms of solution quality and significantly outperforms the penalty decomposition method.
\par\addvspace{\baselineskip}

{
\small\textbf{Keywords.}
	Geometric constraints $\cdot$ Constraint dissolving methods $\cdot$ Penalty-type methods $\cdot$ Asymptotic regularity 
\par\addvspace{\baselineskip}
}

{
\small\textbf{AMS subject classifications.}
	\mscLink{49J52}, \mscLink{90C26}, \mscLink{90C30}
\par\addvspace{\baselineskip}
}

\maketitle

\section{Introduction}
\noindent Besides classical equality and inequality constraints, geometric constraints, characterized by their intrinsic geometric structures, have attracted considerable attention in recent years due to their broad applications in machine learning, signal processing, control, finance, and many other areas. Typical examples include sparsity constraints, complementarity constraints, switching constraints, low-rank constraints, and manifold constraints. 

As a prototype for these applications, we will focus on the optimization problem
\begin{equation}\label{Eq:1}\tag{P}
\begin{aligned}
  \min \ &f(x)  \\ 
  \text{s.t.} \ & G(x)\in C, \ x\in D,
\end{aligned}
\end{equation}
where $f:\mathbb R^n \to \mathbb R$ and $G: \mathbb R^n\to \mathbb R^m$ are continuously differentiable, $C \subset \mathbb R^m$ is a nonempty, closed, and convex set, while $D\subset \mathbb R^n$ is assumed to be nonempty and closed, but not necessarily convex. Owing to the potential nonconvexity of $D$ and its ability to model a broad class of constraint sets, we refer to the constraint $x\in D$
as a \emph{geometric constraint}. 

To the best of our knowledge, (inexact) penalty-based methods have become one of the most successful approaches for 
optimization methods with geometric constraints.
Existing methods can be broadly classified into three categories: augmented Lagrangian methods \cite{de2023constrained, jia2023augmented}, penalty decomposition methods \cite{kanzow2023inexact}, and constraint dissolving methods which however only have been studied for convex problems~\cite{xiao2024dissolving,xiao2026exact}. Extending them to nonconvex geometric constraints constitutes the main focus of this manuscript.

A safeguard augmented Lagrangian method for solving \eqref{Eq:1} was introduced in \cite{jia2023augmented}, where the disjunctive, sparsity, and low-rank constraints were considered. To the best of our knowledge, this was the first algorithmic framework capable of handling the general formulation \eqref{Eq:1} with the arbitrary closed set $D$ and the convex set $C$, rather than being restricted to classical equality and inequality constraints or some particular geometric structure of $D$. Later, \cite{de2023constrained} further generalized the corresponding algorithmic framework and convergence theory to a broader class of nonconvex set-membership constrained optimization problems, of which \eqref{Eq:1} is a special case after an equivalent reformulation with a suitable auxiliary variable.

Second, \cite{kanzow2023inexact} employed an (inexact) penalty decomposition method for solving \eqref{Eq:1}. By introducing an auxiliary variable, carefully choosing the initial penalty value and updating the penalty parameter, the proposed algorithm demonstrated superior numerical performance than the safeguarded augmented Lagrangian method \cite{jia2023augmented}. Nevertheless, its practical performance depends on a careful choice of the penalty parameter, which needs to be tailored to the specific problem under consideration.

In these two classes of methods, under asymptotic Mordukhovich regularity (AM-regularity) being a sequential constraint qualification introduced in \cite{mehlitz2020asymptotic}, it is shown that every accumulation point satisfies the Mordukhovich stationarity conditions formulated via the limiting normal cone.

As a third class of methods, the constraint dissolving technique aims to design a constraint dissolving mapping that incorporates the constraint information into the objective function. This idea was originally proposed by \cite{xiao2024solving} for constructing penalty functions in (Stiefel) manifold optimization. Subsequently, constraint dissolving methods have been extended by Xiao et al. \cite{hu2024constraint,xiao2024dissolving} for solving optimization problems with equality constraints. More recently, \cite{xiao2026exact} further generalized this framework to equality-constrained optimization over convex sets. Despite these encouraging developments, existing constraint dissolving methods are still restricted to optimization problems with equality constraints and convex constraints. To the best of our knowledge, no constraint dissolving-type algorithm is currently available for the general optimization problem \eqref{Eq:1}. 

In this manuscript, we propose a constraint dissolving inexact penalty method for solving \eqref{Eq:1}, which can be viewed as an inexact penalty method enhanced by a constraint dissolving mapping, thereby benefiting from both the flexibility of inexact penalty methods and the geometric advantages offered by the constraint dissolving technique. The main contributions of this manuscript are summarized as follows. 
\begin{enumerate}
    \item We generalize the constraint dissolving technique, which was previously restricted to equality-constrained optimization \cite{hu2024constraint, xiao2024dissolving, xiao2026exact}, to optimization problems with general set-membership constraints $G(x)\in C$ and geometric constraints $x\in D$ , where $C$ is an arbitrary closed convex set and $D$ is allowed to be nonconvex. To the best of our knowledge, this is the first constraint dissolving framework capable of handling such a general class of optimization problems.
    \item We employ a mild constraint qualification, namely AM-regularity \cite{mehlitz2020asymptotic}, for both the construction of the constraint dissolving mapping and the convergence analysis of the corresponding algorithm. In contrast, AM-regularity is considerably weaker than the linear independence constraint qualification (LICQ) employed in \cite{xiao2024dissolving} and the assumption adopted in \cite[Assumption~1.1(2)]{xiao2026exact}, which can be viewed as an extension of the relaxed constant rank constraint qualification (rCRCQ), see \Cref{Lem:weakCQ} for a detailed comparison.
    \item We propose an \textit{inexact} penalty method combined with the constraint dissolving technique, where the penalty parameter is allowed to be updated adaptively. In contrast, the existing constraint dissolving methods \cite{hu2024constraint, xiao2024dissolving, xiao2026exact} are all based on exact penalty formulations with a fixed sufficiently large penalty parameter.
\end{enumerate}

This manuscript is organized as follows. We first introduce some preliminaries and properties of constraint dissolving mappings in \Cref{Sec:pre}. \Cref{Sec:alg} presents the constraint dissolving inexact penalty method and its convergence analysis. The implementation of constraint dissolving mappings for geometric constraints is discussed in \Cref{Sub:imp}. Numerical experiments are reported in \Cref{Sec:num}. We conclude the manuscript in \Cref{Sub:con}.

\section{Preliminaries}\label{Sec:pre}
In this manuscript, we adopt variational analysis as the primary tool. Let us first recall some definitions and properties, primarily following \cite{rockafellar1998variational}.

For any nonempty, closed, but not necessarily convex set $D \subset \mathbb R^n$, let us denote 
\begin{equation*}
    P_D(x):= \text{argmin}_{z\in D}\|z-x\| \quad \text{and}\quad \text{dist}_D(x):=\inf_{z\in D}\|z-x\|
\end{equation*}
as the corresponding projection mapping and the distance function, respectively. Since $D$ is closed and nonempty, $P_D(x)$ is nonempty for every $x$. However, when $D$ is nonconvex, $P_D(x)$ may fail to be a singleton, i.e., the projection onto $D$ does not need to be unique.

Let $C \in \mathbb R^n$ be nonempty, closed, and convex and let $G: \mathbb R^n \to \mathbb R^m$ be continuously differentiable. Then for each $x\in \mathbb R ^n$, by \cite[Lemma~2.1]{de2023constrained}, the gradient of $v(x):=\frac{1}{2}\text{dist}^2_C(G(x))$ can be represented as
\begin{equation}\label{Eq:gradientProjec}
    \nabla v(x)=\nabla G(x)^T\big(G(x)-P_C(G(x))\big).
\end{equation}
If $G(x)\in C$, then the corresponding gradient is trivially equal to $0$.

Given $z\in D$, the \emph{tangent cone} to $D$ at $z$ is given
\begin{equation*}
    \mathcal T_D(z):=\left\{ w \in \mathbb R^n \,\big|\, \frac{z^k-z}{\tau^k} \to w \quad \text{for some} \ z^k \xrightarrow[D]{} z, \tau^k \downarrow 0\right\},
    \end{equation*}
the \emph{limiting/regular tangent cone} to $D$ at $z$ is given by
\begin{equation*}
\mathcal T_D^{\mathrm{lim}}(z)
=
\left\{
\begin{array}{l|l}
w\in\mathbb R^n
&
\begin{array}{l}
\text{for all sequences } \bar z^k \xrightarrow[D]{} z
\text{ and } \tau^k\downarrow 0\\
\text{there exists}\ z^k \xrightarrow[D]{} z
\text{ such that }
\dfrac{z^k-\bar z^k}{\tau^k}\to w
\end{array}
\end{array}
\right\},
\end{equation*}
and consequently $\mathcal T_D^{\mathrm{lim}}(z) \subset \mathcal T_D(z)$.
The \emph{normal cone} to $D$ at $z$ is defined as
\begin{equation*}
    \mathcal N_D(z):=\{ v\in \mathbb R^n \,\big|\, \left<v,x-z\right>\leq o(\|x-z\|) \ \text{for} \ x\in D\},
\end{equation*}
the \emph{limiting normal cone} is defined as
\begin{equation*}
    \mathcal N_D^{\lim}(z):=\{ v\in \mathbb R^n \,|\, \text{there exist sequences} \ z^k \xrightarrow[D]{} z \ \text{and} \ v^k\to v \ \text{with} \ v^k\in \mathcal N_D(z^k)\}.
\end{equation*}
We set $\mathcal N_D(z)=\emptyset$ and $\mathcal N_D^{\lim}(z)=\emptyset$ if $z\notin D$. Note that the limiting normal cone is stable in the sense that
\begin{equation}\label{Eq:robustlimnormalcone}
    \limsup_{z\to \bar z} \mathcal N_D^{\lim}(z)=\mathcal N_D^{\lim}(\bar z) \quad \forall \bar z \in D.
\end{equation}
When $D$ is convex, then the limiting normal cone coincides with the corresponding standard normal cone, i.e., for $z\in D$,
\begin{equation*}
    \mathcal N_D^{\lim}(z)=\mathcal N_D(z):=\{ v\in \mathbb R^n \,|\, \left<v,x-z\right>\leq 0 \quad \forall{x\in D}\}.
\end{equation*}


\subsection{Stationarity and constraint qualification concepts}
This subsection recalls standard stationarity and regularity concepts for the prototypical optimization problem \eqref{Eq:1}. In the following, we denote
\[
\mathcal F:=\{x\in D \,|\, G(x)\in C\}
\]
as the feasible set of \eqref{Eq:1}. 

\begin{definition}\label{Def:MStat}
A point $ \bar x \in \mathcal F $ is called an \emph{M-stationary point} (Mordukhovich-stationary point) 
of \eqref{Eq:1} if there exists a multiplier $ \lambda \in \mathbb R^m $
such that 
\begin{equation*}
    0 \in \nabla f(\bar x) + \nabla G(\bar x)^T \lambda + \mathcal{N}_D^\textup{lim} (\bar x),
    \quad 
    \lambda \in \mathcal{N}_C ( G(\bar x) ).
\end{equation*}
\end{definition}

\begin{definition}\label{Def:AMStat}
A point $ \bar x \in \mathcal F $ is called an \emph{AM-stationary point} (asymptotically M-stationary point) 
of \eqref{Eq:1} if there exist sequences 
$\{ x^k \},\{\varepsilon^k\}\subset\mathbb R^n$ and $\{\lambda^k\},\{ z^k \} \subset\mathbb R^m$ such that
$x^k\to\bar x$, $\varepsilon^k\to 0$, $z^k\to 0$, as well as
\begin{equation*}
   \varepsilon^k \in \nabla f(x^k) + \nabla G(x^k)^T \lambda^k + \mathcal N_D^\textup{lim} (x^k),
   \quad\lambda^k \in \mathcal{N}_C( G(x^k) - z^k)
   \qquad
   \forall k\in\mathbb N.
\end{equation*}
\end{definition}

In order for an AM-stationary point to be M-stationary, let us recall the definition of the so-called asymptotic Mordukhovich regularity.

\begin{definition}\label{Def:AsymptoticRegularity}
	A point $\bar x\in\mathcal F $ of \eqref{Eq:1} is called \emph{AM-regular}
	(asymptotically Mordukhovich-regular) whenever the condition
	\[
		\limsup\limits_{x\to \bar x,\,z\to 0}\mathcal M(x,z)
		\subset
		\mathcal M(\bar x,0)
	\]
	holds, where $\mathcal M\colon\mathbb{R}^n\times\mathbb{R}^m\rightrightarrows\mathbb{R}^n$ 
	is the set-valued mapping defined via
	\[
		\mathcal M(x,z)
		:=
		\nabla G(x)^T\mathcal N_C(G(x)-z)+\mathcal N_D^\textup{lim}(x).
	\]
\end{definition}

The so-called AM-regularity serves as a constraint qualification, which is obviously satisfied if the linear independence constraint qualification (LICQ) holds for any $x\in \mathcal F$, i.e., $\nabla G(x)^T$ is full column rank in the case of $C := \{0\}$ and $D := \mathbb{R}^n$, which has been required in \cite{xiao2024dissolving}. As mentioned in \cite[Lemma~2.8]{jia2023augmented} and \cite[Theorem~3.10, 5.2]{mehlitz2020asymptotic}, the AM-regularity is easily verified whenever $C$ is a polyhedron, $G$ is an affine function, and $D$ is the union of finitely many polyhedrons.

According to \cite{xiao2024dissolving, xiao2026exact}, a constraint qualification seems to be necessary for the validity of constraint dissolving methods. In this manuscript, we merely require AM-regularity to hold. The central assumptions are summarized as follows.

\begin{assumption}(Properties of the optimization problem \eqref{Eq:1})
\begin{enumerate}[label=(\alph*)] \label{Ass:2}
    \item \label{item:2a} $f:\mathbb R^n \to \mathbb R$ and $G:\mathbb R^n\to \mathbb R^m$ are continuously differentiable. 
    \item \label{item:2b} $C \in \mathbb R^m$ is nonempty, closed, and convex. $D\in \mathbb R^n$ is nonempty, closed, but not necessarily convex.
    \item \label{item:2c} The AM-regularity, as defined in \Cref{Def:AsymptoticRegularity}, holds for any feasible point.
\end{enumerate}    
\end{assumption}

\subsection{Constraint dissolving mappings for nonconvex sets}
 In this part, we introduce constraint dissolving mappings for problems governed by nonconvex geometric constraints. To this end, in accordance to our prototypical problem \eqref{Eq:1}, we follow the notation as in \Cref{Ass:2}.


\begin{definition}[Constraint dissolving mapping]
    A mapping $\mathcal A :\mathbb R^n \to \mathbb R^n$ is called a \textit{constraint dissolving mapping} associated with the constraints of \eqref{Eq:1} if the following conditions hold.
    \begin{enumerate}[label=(\alph*)] \label{Ass:1}
    \item \label{item:a}$\mathcal A: \mathbb R^n \to \mathbb R^n$ is continuously differentiable. 
    \item \label{item:b} $\mathcal A(x)=x$ holds for all $x\in \mathcal F$.
    \item \label{item:c}$\nabla \mathcal A(x)^T \nabla G(x)^T \mathcal N_C (G(x))=\{0\}$ holds for all $x\in \mathcal F$.
\end{enumerate}
Moreover, there exists a 
    continuous mapping
$\mathcal R_{\mathcal A} : D\to \mathbb{R}^{n \times n}$
such that:
\begin{enumerate}[label=(\alph*),start=4]
    \item \label{item:d} \( \mathcal R_{\mathcal A}(x) = 0 \) for all \( x \in \mathcal F\),
   \item \label{item:e} for any \( x \in \mathcal F \), there exists \( \omega_x > 0 \) such that
    \[
    \left(\nabla \mathcal A(y) - I_n - \mathcal R_{\mathcal A}(y))\right)^T \mathcal N_D^{\text{lim}}(y) = \{0\}
    \]
   for all $ y \in D \text{ with } \|y - x\| \le \omega_x$.
   \end{enumerate}
\end{definition}

The existing constraint dissolving methods, for example \cite{xiao2024dissolving,xiao2026exact}, were applied to problems with equation constraint $G(x)=0$, i.e., $C:=\{0\}$. In this setting, the constraint dissolving mapping $\mathcal A$ is required to satisfy $\nabla \mathcal A(x)^T\nabla G(x)^T=0$, see \cite[Assumption~1.2]{xiao2024dissolving} and \cite[Assumption~1.2.2]{xiao2026exact}, which is a particular case of \ref{item:c}. Note that \ref{item:d} and \ref{item:e} are a generalization of \cite[Assumption~1.2.3]{xiao2026exact} to the nonconvex set $D$. Moreover, we only require the mapping $\mathcal R_{\mathcal A}$ to be continuous, rather than locally Lipschitz continuous as required in \cite[Assumption~1.2.3]{xiao2026exact}. 

In fact, ignoring the continuity requirement on the mapping $\mathcal R_{\mathcal A}$, our definition of the constraint dissolving mapping satisfying \Cref{Ass:1} reduces exactly to those given in \cite[Assumption~1.2]{xiao2024dissolving} and \cite[Assumption~1.2]{xiao2026exact} for the corresponding setting, thereby substantially extending them to set-membership and geometric constraints.

We next introduce some properties of the constraint dissolving mapping $\mathcal A$. Some of these results have already been established in \cite{xiao2026exact} under the restrictive constraint system and constraint dissolving mapping. For completeness, we provide detailed proofs adapted to our more general setting and to the generalized constraint dissolving mapping.

The following result is devoted to illustrate that $\nabla \mathcal A$ is tangentially identical.
\begin{proposition}\label{Prop:1}
If $\mathcal A$ is a constraint dissolving mapping, i.e., \Cref{Ass:1} holds,  then $\nabla \mathcal A (x) d=d$ for any $x\in \mathcal F$ and $d\in \mathcal T_{\mathcal F}(x)$.
\end{proposition}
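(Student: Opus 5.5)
The plan is to use only properties \ref{item:a} and \ref{item:b} of \Cref{Ass:1}: $\mathcal A$ is continuously differentiable and coincides with the identity on $\mathcal F$. The idea is to differentiate the identity $\mathcal A(z)=z$ along a feasible sequence that realizes the tangent direction $d$. Properties \ref{item:c}--\ref{item:e} will not be needed here; presumably they enter only later, when stationarity conditions are transferred through $\mathcal A$.

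Fix $x\in\mathcal F$ and $d\in\mathcal T_{\mathcal F}(x)$. By the definition of the tangent cone, I will choose sequences $z^k\in\mathcal F$ with $z^k\to x$ and $\tau^k\downarrow 0$ such that $(z^k-x)/\tau^k\to d$. Since both $z^k$ and $x$ lie in $\mathcal F$, property \ref{item:b} gives $\mathcal A(z^k)=z^k$ and $\mathcal A(x)=x$, and hence
\[
\frac{\mathcal A(z^k)-\mathcal A(x)}{\tau^k}=\frac{z^k-x}{\tau^k}\longrightarrow d .
\]

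Next, I will use the differentiability of $\mathcal A$ at $x$ from property \ref{item:a}. It gives
\[
\mathcal A(z^k)-\mathcal A(x)=\nabla\mathcal A(x)(z^k-x)+o(\|z^k-x\|).
\]
Dividing by $\tau^k$ gives
\[
\frac{\mathcal A(z^k)-\mathcal A(x)}{\tau^k}=\nabla\mathcal A(x)\frac{z^k-x}{\tau^k}+\frac{o(\|z^k-x\|)}{\tau^k}.
\]
The quotient $\|z^k-x\|/\tau^k$ converges to $\|d\|$, so it is bounded. Therefore the remainder term $o(\|z^k-x\|)/\tau^k$ tends to $0$. For any indices with $z^k=x$ the remainder is exactly zero, so those cause no difficulty. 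Passing to the limit, the right-hand side converges to $\nabla\mathcal A(x)d$. Comparing with the previous display, uniqueness of limits yields $\nabla\mathcal A(x)d=d$.

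There is no real obstacle in this argument. The only point requiring care is the control of the remainder term, which relies on the boundedness of $\|z^k-x\|/\tau^k$. Note also that only differentiability of $\mathcal A$ at $x$ is used, not continuity of $\nabla\mathcal A$.
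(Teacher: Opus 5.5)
Your proposal is correct and follows the same route as the paper. Both take a feasible sequence $z^k\to x$ with $(z^k-x)/\tau^k\to d$, use property (b) to get $\mathcal A(z^k)-\mathcal A(x)=z^k-x$, divide by $\tau^k$, and pass to the limit via differentiability of $\mathcal A$. Your explicit treatment of the remainder term, and your remark that only differentiability at $x$ is needed, are accurate refinements of the paper's one-line limit argument.
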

\begin{proof}
For any $x\in \mathcal F$ and $d\in \mathcal T_{\mathcal F}(x)$, then there exist sequences $\{x^k\}\subset \mathcal F$ and $\{t_k\}$ satisfying $\lim_{k\to +\infty} x^k=x$ and $t_k \downarrow 0$ such that $ \lim_{k \to +\infty} {(x^k-x)}/{t_k}=d $.
Since $x\in \mathcal F$ and $\{x^k\}\subset \mathcal F$, then \Cref{Ass:1} \ref{item:b} implies $\mathcal A(x^k)-\mathcal A (x) = x^k-x$.

Dividing both sides by $t_k$ and passing the limit, using continuous differentiability of $\mathcal A$ gives
$\nabla \mathcal A(x)d = d$.
%
\end{proof}
The following result is directly deduced from \Cref{Prop:1}.
\begin{corollary}\label{Cor:1}
   Let \Cref {Ass:2} hold and let $\mathcal A$ be a constraint dissolving mapping. Then $\nabla \mathcal A (x) d=d$ for any $x\in \mathcal F$ and $d\in \left(\nabla G(x)^T\mathcal N_C(G(x))+\mathcal N_D^{\text{lim}}(x)\right)^{\perp}$.
\end{corollary}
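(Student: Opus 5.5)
The plan is to show that every $d$ in the orthogonal complement of $\mathcal M(x,0)=\nabla G(x)^T\mathcal N_C(G(x))+\mathcal N_D^{\lim}(x)$ lies in the closed linear span of $\mathcal T_{\mathcal F}(x)$. Once this is established, \Cref{Prop:1} finishes the proof. Indeed, $\nabla \mathcal A(x)-I_n$ is a linear map that vanishes on $\mathcal T_{\mathcal F}(x)$. It therefore vanishes on the linear span of this cone, and by continuity also on its closure.

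\emph{Step 1: the regular normal cone is contained in $\mathcal M(x,0)$.} Fix $x\in\mathcal F$ and a regular normal $v\in\mathcal N_{\mathcal F}(x)$. By the smooth variational description of regular normals (Rockafellar--Wets, Theorem~6.11), there is a $C^1$ function $h$ such that $-\nabla h(x)=v$ and $x$ is a global minimizer of $h$ over $\mathcal F$. By adding $\|\cdot-x\|^2$ we may assume $x$ is the unique minimizer on a small closed ball. It is known that local minimizers of \eqref{Eq:1} with objective $h$ are AM-stationary; see \cite{mehlitz2020asymptotic}. Alternatively, one can show this directly with a quadratic penalty argument. Minimize $h(y)+\frac{\rho_k}{2}\mathrm{dist}_C^2(G(y))+\|y-x\|^2$ over $y\in D$ intersected with the ball. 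By \eqref{Eq:gradientProjec} and the limiting normal cone optimality rule, the minimizers $y^k\to x$ provide multipliers $\lambda^k=\rho_k(G(y^k)-P_C(G(y^k)))\in\mathcal N_C(G(y^k)-z^k)$ with $z^k:=G(y^k)-P_C(G(y^k))\to 0$. AM-regularity (\Cref{Ass:2}\ref{item:2c}) then upgrades AM-stationarity to M-stationarity: $-\nabla h(x)\in\mathcal M(x,0)$. Hence $v\in\mathcal M(x,0)$.

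\emph{Step 2: duality.} Let $d\in\mathcal M(x,0)^{\perp}$. Then $\langle v,d\rangle=0$ for all $v\in\mathcal N_{\mathcal F}(x)$ by Step 1. Consequently both $d$ and $-d$ lie in the polar cone $\mathcal N_{\mathcal F}(x)^{\circ}$. Since $\mathcal N_{\mathcal F}(x)=\mathcal T_{\mathcal F}(x)^{\circ}$, the bipolar theorem gives $\mathcal N_{\mathcal F}(x)^{\circ}=\overline{\mathrm{conv}}\,\mathcal T_{\mathcal F}(x)$. This closed convex cone is contained in the closed linear span of $\mathcal T_{\mathcal F}(x)$, which is therefore the set on which $\nabla\mathcal A(x)d=d$ holds.

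The main obstacle is Step 1, namely verifying that local minimizers are AM-stationary in the present setting with nonconvex $D$. The delicate part is the limiting argument in the penalty subproblems. One must check that the ball constraint becomes inactive, so that it contributes no normal, and that the penalized minimizers $y^k$ converge to $x$. Convergence follows from the uniqueness of the minimizer on the ball together with the standard penalty estimate $\mathrm{dist}_C(G(y^k))\to0$. Once this is in place, the remaining steps are routine linear algebra and convex duality.
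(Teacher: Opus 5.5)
Your proof is correct, but it dualizes differently from the paper.

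The paper takes the inclusion $\mathcal N^{\lim}_{\mathcal F}(x)\subset \nabla G(x)^T\mathcal N_C(G(x))+\mathcal N_D^{\lim}(x)$ directly from \cite[Theorem~3.16]{mehlitz2020asymptotic}. It then uses the polarity between the limiting normal cone and the regular (Clarke) tangent cone, $(\mathcal N^{\lim}_{\mathcal F}(x))^\circ=\mathcal T^{\lim}_{\mathcal F}(x)\subset\mathcal T_{\mathcal F}(x)$. So $d$ lands in the tangent cone itself, and \Cref{Prop:1} applies verbatim.

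You instead prove only the weaker inclusion for the regular normal cone $\mathcal N_{\mathcal F}(x)$, and you prove it yourself. You use the smooth variational description of regular normals, AM-stationarity of minimizers via the quadratic penalty, and then AM-regularity. You pair this with the elementary polarity $\mathcal N_{\mathcal F}(x)=\mathcal T_{\mathcal F}(x)^\circ$. The bipolar theorem then places $d$ only in $\overline{\mathrm{conv}}\,\mathcal T_{\mathcal F}(x)$, not in $\mathcal T_{\mathcal F}(x)$. That is why you need the extra observation that the linear map $\nabla\mathcal A(x)-I_n$ vanishes on the span of $\mathcal T_{\mathcal F}(x)$; that observation is correct.

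What each route buys:
\begin{itemize}
\item Your route is self-contained. It shows the conclusion holds whenever merely $\mathcal N_{\mathcal F}(x)\subset\nabla G(x)^T\mathcal N_C(G(x))+\mathcal N_D^{\lim}(x)$, a Guignard-type hypothesis weaker than the limiting-normal inclusion. The cost is a longer Step~1.
\item The paper's route is a two-line argument, but it rests on cited results: the limiting-normal inclusion and the Clarke tangent cone duality of Rockafellar--Wets.
\end{itemize}

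A minor point: in Step~2 you only need $d\in\mathcal N_{\mathcal F}(x)^\circ$; the corresponding statement for $-d$ is not used.
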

\begin{proof}
Due to  and \Cref {Ass:2} we have
\begin{equation}\label{Eq:NormalconeFeasi}
    \mathcal N^{\text{lim}}_{\mathcal F}(x) \subset \nabla G(x)^T\mathcal N_C(G(x))+\mathcal N_D^{\text{lim}}(x),
\end{equation}
see \cite[Theorem~3.16]{mehlitz2020asymptotic}. Further, due to \Cref {Ass:2} \ref{item:2b}, the feasible set $\mathcal F$ is closed. Therefore, by \cite[Theorem~6.8~(b)]{rockafellar1998variational}, $\left(\mathcal N^{\text{lim}}_{\mathcal F}(x)\right)^{\circ}=\mathcal T^{\text{lim}}_{\mathcal F}(x)$ holds for all $x\in \mathcal F$. It follows from \eqref{Eq:NormalconeFeasi} that
    \begin{equation*}
    \begin{aligned}
        \left(\nabla G(x)^T\mathcal N_C(G(x))+\mathcal N_D^{\text{lim}}(x)\right)^{\perp} &\subset \left(\nabla G(x)^T\mathcal N_C(G(x))+\mathcal N_D^{\text{lim}}(x)\right)^{\circ} \\
        & \subset \left(\mathcal N^{\text{lim}}_{\mathcal F}(x)\right)^{\circ}=\mathcal T^{\text{lim}}_{\mathcal F}(x) \subset T_{\mathcal F}(x).
        \end{aligned}
    \end{equation*}
    Then the conclusion follows immediately from \Cref{Prop:1}.
\end{proof}

We establish the following properties of the constraint dissolving mapping $\mathcal A$.

\begin{proposition}\label{Prop:nablaA}
   Let \Cref {Ass:2} hold and let $\mathcal A$ be a constraint dissolving mapping. For any $x\in \mathcal F$, the following hold:
   \begin{enumerate}[label=(\arabic*)]
       \item \label{item:1prop} 
\begin{equation}\label{Eq:conesconstraintdis}
    \nabla \mathcal A (x)^T\left(\nabla G(x)^T \mathcal N_C (G(x))+\mathcal N^{\text{lim}}_D(x) \right)=\mathcal N^{\text{lim}}_D(x).
\end{equation}
Moreover the set $\nabla G(x)^T\mathcal N_C(G(x))+\mathcal N_D^{\text{lim}}(x)$ is invariant under the linear map $\nabla \mathcal A(x)^\top$.
\item \label{item:2prop} If $w\in \mathbb R^n$ is such that  $0\in \nabla \mathcal A(x)^Tw+ \mathcal N_D^{\text{lim}}(x)$, we have
\begin{align}\label{Item:squaretoD_2} 
           \nabla \mathcal A(x)^T w\in w +\nabla G(x)^T \mathcal N_C (G(x)).
       \end{align}
   \end{enumerate}
\end{proposition}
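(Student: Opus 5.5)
For part~\ref{item:1prop}, fix $x\in\mathcal F$ and exploit the linearity of $\nabla\mathcal A(x)^T$ on the Minkowski sum:
\[
\nabla\mathcal A(x)^T\bigl(\nabla G(x)^T\mathcal N_C(G(x))+\mathcal N_D^{\lim}(x)\bigr)=\nabla\mathcal A(x)^T\nabla G(x)^T\mathcal N_C(G(x))+\nabla\mathcal A(x)^T\mathcal N_D^{\lim}(x).
\]
The first summand is $\{0\}$ by \Cref{Ass:1}~\ref{item:c}. For the second summand, apply \ref{item:e} with $y=x$, which is allowed since $\|x-x\|=0\le\omega_x$. Combined with $\mathcal R_{\mathcal A}(x)=0$ from \ref{item:d}, this gives $(\nabla\mathcal A(x)-I_n)^T\mathcal N_D^{\lim}(x)=\{0\}$, i.e.\ $\nabla\mathcal A(x)^Tv=v$ for every $v\in\mathcal N_D^{\lim}(x)$. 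Hence the right-hand side equals $\{0\}+\mathcal N_D^{\lim}(x)=\mathcal N_D^{\lim}(x)$, which is \eqref{Eq:conesconstraintdis}.

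For the invariance claim, note that $G(x)\in C$, so $0\in\mathcal N_C(G(x))$. Therefore $\mathcal N_D^{\lim}(x)\subset\nabla G(x)^T\mathcal N_C(G(x))+\mathcal N_D^{\lim}(x)$. Together with \eqref{Eq:conesconstraintdis}, this shows that $\nabla\mathcal A(x)^T$ maps the set into itself.

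For part~\ref{item:2prop}, set $u:=\nabla\mathcal A(x)^Tw$, so that $-u\in\mathcal N_D^{\lim}(x)$. By the fixed-point property from part~\ref{item:1prop}, $\nabla\mathcal A(x)^Tu=u$, and hence
\[
\nabla\mathcal A(x)^T(u-w)=0 .
\]
To locate $u-w$, I use \Cref{Cor:1}. Writing $K:=\nabla G(x)^T\mathcal N_C(G(x))+\mathcal N_D^{\lim}(x)$, the corollary says that $(\nabla\mathcal A(x)-I_n)$ vanishes on $K^\perp$. Consequently the range of $(\nabla\mathcal A(x)-I_n)^T$ lies in $(K^\perp)^\perp=\operatorname{span}K$. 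In particular $u-w=(\nabla\mathcal A(x)-I_n)^Tw\in\operatorname{span}K$, so we can write $u-w=a+b$ with $a\in\operatorname{span}\nabla G(x)^T\mathcal N_C(G(x))$ and $b\in\operatorname{span}\mathcal N_D^{\lim}(x)$. Applying $\nabla\mathcal A(x)^T$, which kills $a$ by \ref{item:c} and fixes $b$ by \ref{item:d}–\ref{item:e}, both extended by linearity to the spans, gives $0=b$. Hence $u-w\in\operatorname{span}\bigl(\nabla G(x)^T\mathcal N_C(G(x))\bigr)$.

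The main obstacle is upgrading this from the linear span to the cone $\nabla G(x)^T\mathcal N_C(G(x))$ itself. This is automatic when $\mathcal N_C(G(x))$ is a subspace, for instance when $C=\{0\}$ or $G(x)\in\operatorname{int}C$. In general, my first attempt would use the extra information $-u\in\mathcal N_D^{\lim}(x)$ together with a sign argument: test $u-w$ against directions in $\mathcal T^{\lim}_{\mathcal F}(x)$, on which $\nabla\mathcal A(x)$ acts as the identity by \Cref{Prop:1}, and then use the polar relation \eqref{Eq:NormalconeFeasi}. If the sign argument does not close, I would check whether the intended statement only needs the span, or whether AM-regularity can be brought in.
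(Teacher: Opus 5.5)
Your part (1), including the invariance claim, is correct and is exactly the paper's argument. Part (2), however, has a genuine gap, and it is the one you flag yourself. You only prove $\nabla\mathcal A(x)^Tw-w\in\operatorname{span}\left(\nabla G(x)^T\mathcal N_C(G(x))\right)$, while the statement requires membership in the cone $\nabla G(x)^T\mathcal N_C(G(x))$ itself. This is not a technicality. The span version produces a multiplier with no condition $\lambda\in\mathcal N_C(G(x))$. Consequently the ``$\Longrightarrow$'' direction of \Cref{Thm:2}, and the multiplier used in \Cref{Th:AM-statEquiv}, would lose exactly the sign condition that M-stationarity needs.

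The loss is built into your route through \Cref{Cor:1}. That corollary only says $\nabla\mathcal A(x)$ is the identity on the subspace $K^\perp$. The annihilator of a subspace is a subspace, so this path can never give more than $\operatorname{span}K$. Your proposed repair, a sign argument using $-u\in\mathcal N_D^{\text{lim}}(x)$ tested against $\mathcal T^{\text{lim}}_{\mathcal F}(x)$, is left unexecuted.

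The missing idea is to apply \Cref{Prop:1} on the whole Bouligand tangent cone $\mathcal T_{\mathcal F}(x)$, not only on the subspace $K^\perp\subset\mathcal T_{\mathcal F}(x)$. For every $d\in\mathcal T_{\mathcal F}(x)$ you get $\langle d,u-w\rangle=\langle(\nabla\mathcal A(x)-I_n)d,w\rangle=0$. Hence $u-w\in\mathcal T_{\mathcal F}(x)^\circ=\mathcal N_{\mathcal F}(x)$, the regular normal cone, by \cite[Theorem~6.28~(a)]{rockafellar1998variational}. Then $\mathcal N_{\mathcal F}(x)\subset\mathcal N^{\text{lim}}_{\mathcal F}(x)\subset K$ by \eqref{Eq:NormalconeFeasi}, which is where AM-regularity at all feasible points enters.

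This gives $u-w=\nabla G(x)^T\lambda+q$ with $\lambda\in\mathcal N_C(G(x))$ and $q\in\mathcal N_D^{\text{lim}}(x)$ as genuine cone elements, not span elements. Your own final step then finishes: apply $\nabla\mathcal A(x)^T$; the left side vanishes because $\nabla\mathcal A(x)^Tu=u$, and the right side equals $q$, so $q=0$. This is precisely the paper's proof. No sign argument is needed beyond the fixed-point identity $\nabla\mathcal A(x)^Tu=u$.

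Testing against $\mathcal T^{\text{lim}}_{\mathcal F}(x)$, as you suggest, is the less convenient choice. Its polar is $\overline{\operatorname{conv}}\,\mathcal N^{\text{lim}}_{\mathcal F}(x)$, so you would need an extra convexification and closure argument to get back into $K$. The Bouligand cone's polar is exactly the regular normal cone and plugs directly into \eqref{Eq:NormalconeFeasi}.
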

\begin{proof}
The proof of the first claim \ref{item:1prop} follows directly follows from \Cref{Ass:1} \ref{item:c}, \ref{item:d}, and \ref{item:e}. Due to $0\in \nabla G(x)^T\mathcal N_C(G(x))$, the invariance follows from $\mathcal N^{\text{lim}}_D(x) \subset \nabla G(x)^T\mathcal N_C(G(x))+\mathcal N_D^{\text{lim}}(x)$. This shows \ref{item:1prop}.

To show \ref{item:2prop}, we first prove an auxiliary result, namely
\begin{align}\label{eq:aux}
    \left(\nabla \mathcal A (x)^T\right)^m w =\nabla \mathcal A(x)^T w \quad \forall m = 1,2,\ldots.
\end{align}
    To see this, since $0\in \nabla \mathcal A(x)^Tw+ \mathcal N_D^{\text{lim}}(x)$, there exists $y\in \mathcal N_D^{\text{lim}}(x)$ such that $y=-\nabla \mathcal A(x)^Tw$. By \Cref{Ass:1} \ref{item:e}, we have
    \begin{equation}\label{Eq:squareinD}
    -\left(\nabla \mathcal A(x)^T\right)^2w=\nabla \mathcal A(x)^T y=y \in \mathcal N_D^{\text{lim}}(x).
    \end{equation}
    Therefore, $\left(\nabla \mathcal A(x)^T\right)^2w=\nabla \mathcal A(x)^Tw$. Multiplying both sides of \eqref{Eq:squareinD} by $\nabla \mathcal A(x)^T$ yields
    \begin{equation*}
    -\left(\nabla \mathcal A(x)^T\right)^3w=\nabla \mathcal A(x)^T y = y \in \mathcal N_D^{\text{lim}}(x).
    \end{equation*}
    Applying the same argument recursively, we obtain
\[
-\left(\nabla \mathcal A(x)^T\right)^m w = y
\quad \forall m = 1,2,\ldots,
\]
which implies
\[
\left(\nabla \mathcal A(x)^T\right)^m w
=
\nabla \mathcal A(x)^T w
\quad \forall m = 1,2,\ldots.
\]
This proves the auxiliary result \eqref{eq:aux}.

Let us now verify \ref{Item:squaretoD_2}. For any $d\in \mathcal T_{\mathcal F}(x)$, by \Cref{Prop:1}, one has   
\begin{equation*}
    \left< d, \nabla \mathcal A(x)^Tw-w \right>= \left< \left(\nabla \mathcal A(x)-I_n\right)d, w \right>=0,
\end{equation*}
it follows from \cite[Theorem~6.28~(a)]{rockafellar1998variational} and \eqref{Eq:NormalconeFeasi} that
\begin{equation*}
    \nabla \mathcal A(x)^Tw-w \in \left(\mathcal T_{\mathcal F}(x) \right)^{\circ} = \mathcal N_{\mathcal F}(x) \subset \mathcal N_{\mathcal F}^{\text{lim}}(x) \subset \nabla G(x)^T \mathcal N_C (G(x))+ \mathcal N_{D}^{\text{lim}}(x).
\end{equation*}
Consequently, there exist $\lambda \in \mathcal N_C (G(x))$ and $q\in \mathcal N_{D}^{\text{lim}}(x)$ such that
  $ \nabla \mathcal A(x)^Tw-w = \nabla G(x)^T \lambda +q $.
By \Cref{Ass:1} \ref{item:c} and \ref{item:e}, we have
\begin{equation*}
    0=\nabla \mathcal A(x)^T \left( \nabla \mathcal A(x)^T w-w\right)=\nabla \mathcal A(x)^T q =q.
\end{equation*}
Therefore, 
   $ \nabla \mathcal A(x)^Tw-w = \nabla G(x)^T \lambda \in \nabla G(x)^T \mathcal N_C (G(x))$.
This completes the proof.
\end{proof}
\begin{theorem}\label{Thm:2}
    Let \Cref {Ass:2} hold and let $\mathcal A$ be a constraint dissolving mapping.  Then for any $x\in \mathcal F$ and $w\in \mathbb R^n$, we have
    \[0\in \nabla \mathcal A(x)^Tw+\mathcal N_D^{\text{lim}}(x) \quad \Longleftrightarrow \quad 0\in w+\nabla G(x)^T\mathcal N_C(G(x))+\mathcal N_D^{\text{lim}}(x).
    \]
\end{theorem}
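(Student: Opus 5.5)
We prove the two implications separately, relying mostly on \Cref{Prop:nablaA}.

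\emph{($\Rightarrow$)} Assume $0\in \nabla \mathcal A(x)^Tw+\mathcal N_D^{\text{lim}}(x)$. Then \Cref{Prop:nablaA}~\ref{item:2prop} gives $\lambda\in\mathcal N_C(G(x))$ with $\nabla \mathcal A(x)^Tw = w+\nabla G(x)^T\lambda$. Substituting this into the hypothesis yields
\[
0\in w+\nabla G(x)^T\lambda+\mathcal N_D^{\text{lim}}(x)\subset w+\nabla G(x)^T\mathcal N_C(G(x))+\mathcal N_D^{\text{lim}}(x),
\]
which is the claim. This direction is immediate.

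\emph{($\Leftarrow$)} Assume $-w=\nabla G(x)^T\lambda+q$ with $\lambda\in\mathcal N_C(G(x))$ and $q\in\mathcal N_D^{\text{lim}}(x)$. We apply $\nabla\mathcal A(x)^T$ to both sides.
\begin{enumerate}
\item By \Cref{Ass:1}~\ref{item:c}, $\nabla \mathcal A(x)^T\nabla G(x)^T\lambda=0$.
\item By \ref{item:e} with $y=x$ together with \ref{item:d} (so $\mathcal R_{\mathcal A}(x)=0$), we have $(\nabla\mathcal A(x)-I_n)^Tq=0$, i.e., $\nabla\mathcal A(x)^Tq=q$.
\end{enumerate}
Combining these gives $-\nabla\mathcal A(x)^Tw = q\in\mathcal N_D^{\text{lim}}(x)$, hence $0\in\nabla\mathcal A(x)^Tw+\mathcal N_D^{\text{lim}}(x)$. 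The same computation is exactly the identity \eqref{Eq:conesconstraintdis} in \Cref{Prop:nablaA}~\ref{item:1prop}, which maps $-w$ into $\mathcal N_D^{\text{lim}}(x)$.

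The one point that needs care is in the ($\Leftarrow$) direction: \ref{item:e} must be applied at $y=x$ itself. This is allowed because $x\in\mathcal F\subset D$ and $\|x-x\|=0\le\omega_x$. Everything else is direct substitution of earlier results.
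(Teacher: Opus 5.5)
Your proof is correct and follows the paper's argument. The forward direction is the paper's application of \eqref{Item:squaretoD_2} from \Cref{Prop:nablaA}; the backward direction is the paper's step of multiplying by $\nabla\mathcal A(x)^T$ and invoking \eqref{Eq:conesconstraintdis}, which you unpack into properties (c)--(e) of the constraint dissolving mapping at $y=x$, correctly noting that only the inclusion ``$\subset$'' is needed.
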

\begin{proof}
    ``$\Longleftarrow$" 
Multiplying both sides of the inclusion 
by $\nabla \mathcal{A}(x)^T$ and using \eqref{Eq:conesconstraintdis} of \Cref{Prop:nablaA} yields
    \begin{equation*}
    \begin{aligned}
        0&\in \nabla \mathcal A(x)^Tw+\nabla \mathcal A(x)^T\left(\nabla G(x)^T\mathcal N_C(G(x))+\mathcal N_D^{\text{lim}}(x)\right)\\
        &=\nabla \mathcal A(x)^Tw+\mathcal N_D^{\text{lim}}(x).
        \end{aligned}
    \end{equation*}
    
    ``$\Longrightarrow$" This direction directly follows from \eqref{Item:squaretoD_2}. To be specific, 
    \begin{equation*}
            0\in \nabla \mathcal A(x)^Tw+\mathcal N_D^{\text{lim}}(x)
            \subset w+ \nabla G(x)^T\mathcal N_C(G(x)) + \mathcal N_D^{\text{lim}}(x).
    \end{equation*}
\end{proof}

\subsection{Equivalence of stationarities}

We employ a constraint dissolving mapping $\mathcal A:\mathbb R^n \to \mathbb R^n$ to reformulate \eqref{Eq:1}, i.e.,
\begin{equation}\label{Eq:Q}\tag{Q}
\begin{aligned}
  \min \ &f(\mathcal A(x)) + \frac{\beta}{2}\text{dist}^2_C(G(x)) \\ 
  \text{s.t.} \ & x\in D,
\end{aligned}
\end{equation}
where $\beta>0$ denotes the penalty parameter.

This subsection is devoted to establishing the equivalence between M-stationary points of \eqref{Eq:1} and \eqref{Eq:Q}, and to analyzing the relationship between their AM-stationary points. To this end, we begin with recalling the main assumption of this part and by recalling the definitions of AM- and M-stationary points associated with \eqref{Eq:Q}.

\begin{assumption}\label{ass:stationarity}
    \Cref{Ass:2} holds and $\mathcal A$ be a constraint dissolving mapping in the sense of \Cref{Ass:1}.
\end{assumption}

\begin{definition}\label{Def:MQ}
    A point $ \bar x \in D $ is called an \emph{M-stationary point}
of \eqref{Eq:Q} if
\begin{equation*}
    0 \in \nabla \mathcal A(\bar x)^T \nabla f(\mathcal A(\bar x)) + \beta \nabla G(\bar x)^T\left(G(\bar x)-P_C(G(\bar x))\right)+\mathcal{N}_D^\textup{lim} (\bar x).
\end{equation*}
\end{definition}

\begin{definition}\label{Def:AMforQ}
A point $ \bar x \in D $ is called an \emph{AM-stationary point}
of \eqref{Eq:Q} if there exist sequences 
$\{ x^k \},\{\epsilon^k\}\subset\mathbb R^n$ such that
$x^k\to\bar x$, $\epsilon^k\to 0$, as well as
\begin{equation*}
   \epsilon^k \in \nabla \mathcal A(x^k)^T \nabla f(\mathcal A(x^k)) +  \beta \nabla G(x^k)^T \big(G(x^k)-P_C(G(x^k))\big)+\mathcal N_D^\textup{lim} (x^k)
   \qquad
   \forall k\in\mathbb N.
\end{equation*}
\end{definition}

\Cref{Def:AMforQ} implicitly requires that the sequence $\{x^k\}$ lies in $D$, since otherwise the corresponding limiting normal cone is empty. Consequently, the sequence $\{x^k\}$ is feasible for \eqref{Eq:Q}, but not necessarily feasible for \eqref{Eq:1}. The next result establishes the equivalence between the M-stationary points of \eqref{Eq:1} and \eqref{Eq:Q} at any 
feasible point of \eqref{Eq:1} (and hence of \eqref{Eq:Q}).


\begin{theorem}\label{Th:EquilM}
      Let \Cref{ass:stationarity} hold. For any $\bar x\in \mathcal F $, $\bar x$ is an M-stationary point of \eqref{Eq:1} if and only if $\bar x$ is an M-stationary point of \eqref{Eq:Q}.
\end{theorem}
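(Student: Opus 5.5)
The plan is to reduce the statement directly to \Cref{Thm:2}, applied with the choice $w:=\nabla f(\bar x)$. The first step is to simplify the M-stationarity condition of \eqref{Eq:Q} at a feasible point. Since $\bar x\in\mathcal F$, we have $G(\bar x)\in C$, so $G(\bar x)-P_C(G(\bar x))=0$ and the penalty term $\beta\nabla G(\bar x)^T(G(\bar x)-P_C(G(\bar x)))$ vanishes (cf.\ \eqref{Eq:gradientProjec}). Moreover, \Cref{Ass:1}~\ref{item:b} gives $\mathcal A(\bar x)=\bar x$, hence $\nabla f(\mathcal A(\bar x))=\nabla f(\bar x)$. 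Thus $\bar x$ is M-stationary for \eqref{Eq:Q} if and only if
\begin{equation*}
0\in \nabla \mathcal A(\bar x)^T\nabla f(\bar x)+\mathcal N_D^{\text{lim}}(\bar x).
\end{equation*}
This condition does not depend on $\beta$, which is consistent with the statement holding for every $\beta>0$.

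The second step is to rewrite M-stationarity of \eqref{Eq:1} from \Cref{Def:MStat}. Existence of a multiplier $\lambda\in\mathcal N_C(G(\bar x))$ with $0\in\nabla f(\bar x)+\nabla G(\bar x)^T\lambda+\mathcal N_D^{\text{lim}}(\bar x)$ is the same as
\begin{equation*}
0\in \nabla f(\bar x)+\nabla G(\bar x)^T\mathcal N_C(G(\bar x))+\mathcal N_D^{\text{lim}}(\bar x).
\end{equation*}
This is a set-sum reformulation with no content beyond unpacking the quantifier over $\lambda$.

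The third step is to apply \Cref{Thm:2} with $x=\bar x\in\mathcal F$ and $w=\nabla f(\bar x)$. That theorem states exactly the equivalence of the two displayed inclusions, which completes the argument. Everything substantive, namely the use of \Cref{Prop:nablaA} and the normal cone estimate \eqref{Eq:NormalconeFeasi} under AM-regularity, was already handled in \Cref{Thm:2}. There is no real obstacle. The only point requiring care is the justification that the penalty gradient vanishes and that $\mathcal A(\bar x)=\bar x$, both of which rely on feasibility of $\bar x$; for infeasible points the reduction would fail.
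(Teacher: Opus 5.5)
Your proposal is correct and matches the paper's proof: both reduce the claim to \Cref{Thm:2} at $\bar x$ with $w=\nabla f(\mathcal A(\bar x))=\nabla f(\bar x)$. You are slightly more explicit than the paper, which only notes that the penalty term lies in $\mathcal N_C(G(\bar x))$. You spell out that this term in fact vanishes because $P_C(G(\bar x))=G(\bar x)$, and that $\mathcal A(\bar x)=\bar x$ is what identifies the resulting condition with M-stationarity of \eqref{Eq:1}.
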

\begin{proof}
    Since $C$ is convex, $\bar x\in \mathcal F$, and $\beta>0$, it follows that $\beta (G(\bar x)-P_C(G(\bar x))) \in \mathcal N_C(G(\bar x))$. Applying \Cref{Thm:2} with $w:=\nabla f(\mathcal A(\bar x))$, we obtain the equivalence of M-stationarity between \eqref{Eq:1} and \eqref{Eq:Q}.
\end{proof}
  Next, we establish the inclusion that every AM-stationary point of \eqref{Eq:Q} is AM-stationary for \eqref{Eq:1}.
  \begin{theorem}\label{Th:AM-statEquiv}
      Let \Cref{ass:stationarity} hold. For any $\bar x \in \mathcal F$, if $\bar x$ is an AM-stationary point of \eqref{Eq:Q}, then $\bar x$ is an AM-stationary point for \eqref{Eq:1}. 
  \end{theorem}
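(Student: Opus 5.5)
The plan is to show that $\bar x$ is in fact an M-stationary point of \eqref{Eq:1}. This suffices, because an M-stationary point is trivially AM-stationary: take the constant sequences $x^k:=\bar x$, $\lambda^k:=\lambda$, $z^k:=0$, $\varepsilon^k:=0$ in \Cref{Def:AMStat}. The AM-regularity from \Cref{Ass:2}~\ref{item:2c} lets us pass to the limit, and the algebraic structure of $\nabla\mathcal A(\bar x)$ from \Cref{Ass:1} and \Cref{Prop:1} then removes the dissolving mapping.

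\emph{Step 1 (limit inclusion).} Let $x^k\to\bar x$ and $\epsilon^k\to0$ be as in \Cref{Def:AMforQ}. Set
\[
z^k:=G(x^k)-P_C(G(x^k)),\qquad \lambda^k:=\beta z^k .
\]
Since $G(\bar x)\in C$ and $G$, $P_C$ are continuous, we have $z^k\to0$. Moreover, $\lambda^k\in\mathcal N_C(P_C(G(x^k)))=\mathcal N_C(G(x^k)-z^k)$. Hence
\[
\epsilon^k-\nabla\mathcal A(x^k)^T\nabla f(\mathcal A(x^k))\in \mathcal M(x^k,z^k).
\]
The left-hand side converges to $-\nabla\mathcal A(\bar x)^T\nabla f(\bar x)$, because $\mathcal A(x^k)\to\mathcal A(\bar x)=\bar x$ by \Cref{Ass:1}~\ref{item:b} and $f$, $\mathcal A$ are $C^1$. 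AM-regularity then yields $\lambda\in\mathcal N_C(G(\bar x))$ and $v\in\mathcal N_D^{\lim}(\bar x)$ with
\[
\nabla\mathcal A(\bar x)^T\nabla f(\bar x)=-\nabla G(\bar x)^T\lambda-v .
\]

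\emph{Step 2 (eliminating $\nabla\mathcal A$).} Abbreviate $A:=\nabla\mathcal A(\bar x)$ and $w:=\nabla f(\bar x)$.

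First, by \Cref{Ass:1}~\ref{item:c}, \ref{item:d}, \ref{item:e} at the feasible point $\bar x$ (where $\mathcal R_{\mathcal A}(\bar x)=0$), we have $A^T\nabla G(\bar x)^T\lambda=0$ and $A^Tv=v$. Applying $A^T$ to the identity of Step~1 therefore gives $(A^T)^2w=-v$.

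Second, exactly as in the proof of \Cref{Prop:nablaA}~\ref{item:2prop}, \Cref{Prop:1} gives $\langle d,(A-I_n)^Tw\rangle=0$ for all $d\in\mathcal T_{\mathcal F}(\bar x)$. Combined with \eqref{Eq:NormalconeFeasi}, this yields $\mu\in\mathcal N_C(G(\bar x))$ and $q\in\mathcal N_D^{\lim}(\bar x)$ with
\[
A^Tw-w=\nabla G(\bar x)^T\mu+q .
\]
This step uses only \Cref{Prop:1} and AM-regularity, not the hypothesis of \Cref{Prop:nablaA}~\ref{item:2prop}.

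Third, apply $A^T$ to this second identity, again using \ref{item:c} and \ref{item:e}:
\[
q=(A^T)^2w-A^Tw=-v+\nabla G(\bar x)^T\lambda+v=\nabla G(\bar x)^T\lambda .
\]

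\emph{Step 3 (conclusion).} Substituting back,
\[
w=A^Tw-\nabla G(\bar x)^T\mu-q=-v-\nabla G(\bar x)^T(2\lambda+\mu).
\]
Since $\mathcal N_C(G(\bar x))$ is a convex cone, $2\lambda+\mu\in\mathcal N_C(G(\bar x))$. Hence $0\in\nabla f(\bar x)+\nabla G(\bar x)^T\mathcal N_C(G(\bar x))+\mathcal N_D^{\lim}(\bar x)$, so $\bar x$ is M-stationary for \eqref{Eq:1} and hence AM-stationary.

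The main obstacle, which I expect to require the care described above, is that a direct sequential argument fails. Rewriting $\nabla\mathcal A(x^k)^T\nabla f(\mathcal A(x^k))$ as $\nabla f(x^k)$ plus a correction produces a normal vector at $\bar x$ that must be added to one at $x^k$. Since $\mathcal N_D^{\lim}$ is neither convex nor constant along the sequence, this addition is not legitimate. The detour through AM-regularity and the exact identities at the feasible limit point $\bar x$, where $\mathcal R_{\mathcal A}(\bar x)=0$, avoids this problem.
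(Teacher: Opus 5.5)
Your proof is correct, but it is organized differently from the paper's. The paper proves AM-stationarity directly. Because $\beta$ is fixed and $G(\bar x)\in C$, the penalty term $\beta\nabla G(x^k)^T(G(x^k)-P_C(G(x^k)))$ tends to $0$, so outer semicontinuity of $\mathcal N_D^{\text{lim}}$ alone gives $0\in\nabla\mathcal A(\bar x)^T\nabla f(\bar x)+\mathcal N_D^{\text{lim}}(\bar x)$. Then \eqref{Item:squaretoD_2} in \Cref{Prop:nablaA} provides one $\lambda\in\mathcal N_C(G(\bar x))$ with $\nabla\mathcal A(\bar x)^T\nabla f(\bar x)=\nabla f(\bar x)+\nabla G(\bar x)^T\lambda$. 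The AM-sequences are then built along the \emph{original} $x^k$ with this fixed $\lambda$. The error terms are $\nabla\mathcal A(x^k)^T\nabla f(\mathcal A(x^k))-\nabla f(x^k)-\nabla G(x^k)^T\lambda\to 0$ and $\beta\nabla G(x^k)^T(G(x^k)-P_C(G(x^k)))\to 0$, and the shift $z^k:=G(x^k)-G(\bar x)$ makes $\lambda\in\mathcal N_C(G(x^k)-z^k)$.

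You instead replay the argument of \Cref{Th:con}: AM-regularity to pass to the limit, then the two identities obtained by applying $\nabla\mathcal A(\bar x)^T$. This gives M-stationarity, which is the content of \Cref{Th:PtoQ}, and you finish with the trivial implication M $\Rightarrow$ AM. Your route yields the stronger conclusion in one pass. The paper's route keeps the limit step free of any constraint qualification (AM-regularity enters only through \eqref{Eq:NormalconeFeasi}, which both routes use) and reuses \Cref{Prop:nablaA} as a black box. Your invocation of AM-regularity in Step~1 is harmless but unnecessary: with $\beta$ fixed you may take $\lambda=0$ there.

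Your closing remark about the ``main obstacle'' is, however, mistaken. The direct sequential argument does work, and it is exactly what the paper does. No normal vectors to $D$ at different points are ever added, because the correction $\nabla\mathcal A(\bar x)^T\nabla f(\bar x)-\nabla f(\bar x)$ has no $\mathcal N_D^{\text{lim}}$-component. It lies entirely in $\nabla G(\bar x)^T\mathcal N_C(G(\bar x))$ and is absorbed into the $C$-multiplier. Your own computation shows this: in your notation, $q=\nabla G(\bar x)^T\lambda$ gives $A^Tw-w=\nabla G(\bar x)^T(\mu+\lambda)$.
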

\begin{proof}
    Provided that $\bar x$ is AM-stationary for \eqref{Eq:Q}, by \Cref{Def:AMStat}, there exist sequences $\{x^k\}, \{\epsilon_1^k\} \subset \mathbb R^n$ satisfying $x^k\to \bar x$ and $\epsilon_1^k \to 0$ such that
\begin{equation}\label{Eq:AMforQ}
    \epsilon_1^k\in\nabla \mathcal A(x^k)^T \nabla f(\mathcal A(x^k))+\beta \nabla G(x^k)^T\big(G(x^k)-P_C(G(x^k))\big)+\mathcal N_D^{\text{lim}}(x^k) \quad \forall k\in \mathbb N.
\end{equation}
Taking the limit $k\to \infty$ to \eqref{Eq:AMforQ}, by the continuity of $\nabla \mathcal A, \nabla f, \nabla G,$ and $G$ as well as the outer semicontinuity of $\mathcal N_D^{\text{lim}}$, one has
    $0\in \nabla \mathcal A(\bar x)^T \nabla f(\bar x)+\mathcal N_D^{\text{lim}}(\bar x)$.
By \eqref{Item:squaretoD_2} with $w:=\nabla f(\bar x)$, there exists $\lambda \in \mathcal N_C(G(\bar x))$ such that
$\nabla \mathcal A(\bar x)^T \nabla f(\bar x) = \nabla f(\bar x) +\nabla G(\bar x)^T\lambda$.

Consequently, we have
\begin{small}
\begin{equation*}
    \begin{aligned}
       & \|\nabla \mathcal A (x^k)^T \nabla f(\mathcal A(x^k))-\nabla f(x^k)-\nabla G(x^k)^T\lambda \|\\
       & = \|\nabla \mathcal A (x^k)^T \nabla f(\mathcal A(x^k))-\nabla \mathcal A (\bar x)^T \nabla f(\bar x) - \nabla f(x^k) + \nabla f(\bar x)  - \nabla G(x^k)^T\lambda + \nabla G(\bar x)^T\lambda \|\\
       & \leq  \|\nabla \mathcal A (x^k)^T \nabla f(\mathcal A(x^k))-\nabla \mathcal A (\bar x)^T \nabla f(\bar x)\| + \|\nabla f(\bar x)- \nabla f(x^k) \| \\
       & ~~~~ +\| \nabla G(\bar x)^T\lambda - \nabla G(x^k)^T\lambda \|\to 0,
    \end{aligned}
\end{equation*}
\end{small}
where the limit holds due to the continuity of $\mathcal A$, $\nabla \mathcal A$, $\nabla f$, and $\nabla G$. Thus, by setting
\begin{equation*}
 \epsilon_2^k:=\nabla \mathcal A (x^k)^T \nabla f(\mathcal A(x^k))-\nabla f(x^k)-\nabla G(x^k)^T\lambda \quad \forall k\in \mathbb N
\end{equation*}
and
\begin{equation}\label{Eq:result2approx}
    \epsilon_3^k:=\beta \nabla G(x^k)^T\big(G(x^k)-P_C(G(x^k))\big) \quad \forall k\in \mathbb N,
\end{equation}
we have $\epsilon_2^k \to 0$ and $\epsilon_3^k \to 0$. Meanwhile,
\eqref{Eq:AMforQ} yields that
\begin{equation*}
    \epsilon_1^k-\epsilon_2^k-\epsilon_3^k \in \nabla f(x^k)+ \nabla G(x^k)^T\lambda + \mathcal N_D^{\text{lim}}(x^k) \quad \forall k\in \mathbb N. 
\end{equation*}
Let us define $\varepsilon^k:=\epsilon_1^k-\epsilon_2^k-\epsilon_3^k$ for all $k\in \mathbb N$, then $\varepsilon^k\to 0$. Since $\lambda \in \mathcal N_C(G(\bar x))$, define $z^k := G(x^k)- G(\bar x)$, then $z^k \to 0$ and $\lambda \in \mathcal N_C(G(x^k)-z^k)$ for all $k\in \mathbb N$. Therefore, by \Cref{Def:AMStat}, $\bar x$ is an AM-stationary point of \eqref{Eq:1}.
\end{proof}
  
\begin{corollary}\label{Th:PtoQ}
      Let \Cref{ass:stationarity} hold. For any $\bar x \in \mathcal F$, if $\bar x$ is an AM-stationary point of \eqref{Eq:Q}, then $\bar x$ is an M-stationary point for \eqref{Eq:1}.
\end{corollary}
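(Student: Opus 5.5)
The plan is to chain \Cref{Th:AM-statEquiv} with the standard fact that, under AM-regularity, AM-stationary points of \eqref{Eq:1} are M-stationary. By \Cref{Th:AM-statEquiv}, the point $\bar x\in\mathcal F$ is AM-stationary for \eqref{Eq:1}. So there exist sequences $x^k\to\bar x$, $\varepsilon^k\to 0$, $z^k\to 0$ and multipliers $\lambda^k$ with
\[
\varepsilon^k-\nabla f(x^k)\in \nabla G(x^k)^T\mathcal N_C(G(x^k)-z^k)+\mathcal N_D^{\textup{lim}}(x^k)=\mathcal M(x^k,z^k).
\]

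Next, pass to the limit using AM-regularity. The left-hand side $\varepsilon^k-\nabla f(x^k)$ converges to $-\nabla f(\bar x)$, by continuity of $\nabla f$ from \Cref{Ass:2}~\ref{item:2a}. Hence
\[
-\nabla f(\bar x)\in\limsup_{x\to\bar x,\,z\to 0}\mathcal M(x,z).
\]
Since $\bar x\in\mathcal F$, \Cref{Ass:2}~\ref{item:2c} applies at $\bar x$, and \Cref{Def:AsymptoticRegularity} gives
\[
-\nabla f(\bar x)\in\mathcal M(\bar x,0)=\nabla G(\bar x)^T\mathcal N_C(G(\bar x))+\mathcal N_D^{\textup{lim}}(\bar x).
\]

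Finally, unpack this inclusion. It yields $\lambda\in\mathcal N_C(G(\bar x))$ with $0\in\nabla f(\bar x)+\nabla G(\bar x)^T\lambda+\mathcal N_D^{\textup{lim}}(\bar x)$, which is exactly \Cref{Def:MStat}.

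I expect no real obstacle here. The only point needing care is that the outer limit in \Cref{Def:AsymptoticRegularity} is the Painlevé–Kuratowski outer limit. Membership of $-\nabla f(\bar x)$ in it follows directly because the convergent sequence $\varepsilon^k-\nabla f(x^k)$ has $\varepsilon^k-\nabla f(x^k)\in\mathcal M(x^k,z^k)$ along $(x^k,z^k)\to(\bar x,0)$.

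Alternatively, in the proof of \Cref{Th:AM-statEquiv} a fixed multiplier $\lambda\in\mathcal N_C(G(\bar x))$ with $0\in\nabla f(\bar x)+\nabla G(\bar x)^T\lambda+\mathcal N_D^{\textup{lim}}(\bar x)$ was already obtained, via \eqref{Item:squaretoD_2} and outer semicontinuity of $\mathcal N_D^{\textup{lim}}$. This is precisely \Cref{Th:EquilM} applied to the limit, so one could even bypass AM-regularity. I would still present the AM-regularity argument as the main route, since it matches the corollary's placement.
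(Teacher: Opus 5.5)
Your proof is correct, but your main route differs from the paper's. The paper's proof is essentially your closing ``alternative'' paragraph. It passes to the limit directly in the AM-stationarity inclusion for \eqref{Eq:Q}: since $\bar x\in\mathcal F$, the penalty term vanishes and $\mathcal A(\bar x)=\bar x$. This gives $0\in\nabla\mathcal A(\bar x)^T\nabla f(\bar x)+\mathcal N_D^{\textup{lim}}(\bar x)$, i.e.\ M-stationarity for \eqref{Eq:Q}, which \Cref{Th:EquilM} converts into M-stationarity for \eqref{Eq:1}.

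You instead go through AM-stationarity of \eqref{Eq:1} via \Cref{Th:AM-statEquiv}, and then use the generic principle that AM-stationarity plus AM-regularity yields M-stationarity. All of your steps are sound, including the outer-limit membership of $-\nabla f(\bar x)$. Your route is modular and mirrors the standard ALM/PD convergence analysis. The paper's route is more economical. The proof of \Cref{Th:AM-statEquiv} already produces a fixed exact multiplier $\lambda\in\mathcal N_C(G(\bar x))$ at $\bar x$ and only re-embeds it into a sequence via $z^k:=G(x^k)-G(\bar x)$. Your final AM-regularity step then just undoes that re-embedding.

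One correction to your side remark: the direct route does \emph{not} bypass AM-regularity. \Cref{Th:EquilM} rests on \eqref{Item:squaretoD_2}, whose proof uses the inclusion \eqref{Eq:NormalconeFeasi}, and the paper derives that inclusion from AM-regularity. The assumption is genuinely needed. Take $n=m=1$, $G(x)=x^2$, $C=\{0\}$, $D=\mathbb R$, $f(x)=x$. The constant map $\mathcal A\equiv 0$ (with $\mathcal R_{\mathcal A}\equiv 0$) satisfies every requirement of a constraint dissolving mapping. Yet $\bar x=0$ is M-stationary for \eqref{Eq:Q} and not for \eqref{Eq:1}, and AM-regularity fails there. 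So the direct route only avoids the explicit outer-limit step; AM-regularity is still used, once, inside \Cref{Prop:nablaA}.
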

\begin{proof}
   By arguments similar to those used in \Cref{Th:AM-statEquiv}, one has
\begin{equation*}
    0\in \nabla \mathcal A(\bar x)^T \nabla f(\bar x)+\mathcal N_D^{\text{lim}}(\bar x).
\end{equation*}
Since $G(\bar x)\in C$, then $\bar x$ is an M-stationary point of \eqref{Eq:Q}. Therefore, by \Cref{Th:EquilM}, $\bar x$ is also an M-stationary point of \eqref{Eq:1}.
\end{proof}

In fact, every M-stationary point of \eqref{Eq:Q} is AM-stationary of \eqref{Eq:Q}. Combined with the above analysis and under mild conditions, this implies that, for any fixed $\beta>0$, as typically considered in exact penalty methods, a point $x\in \mathcal F$ is M-stationary for \eqref{Eq:Q} if and only if it is AM-stationary for \eqref{Eq:Q}.

Let us note that all the results in this section rely on AM-regularity defined in \Cref{Def:AsymptoticRegularity}. In contrast, stronger constraint qualifications were required in previous works. Specifically, for the setting $C:=\{0\}$ and $D:=\mathbb R^n$, considered in \cite{xiao2024dissolving}, the linear independence constraint qualification (LICQ) was employed. By \cite[Lemma~2.7]{jia2023augmented}, AM-regularity is weaker than the relaxed constant positive linear dependence constraint qualification (RCPLD), and therefore also weaker than LICQ. Moreover, for the setting $C:=\{0\}$ and $D$
being a convex set, considered in \cite{xiao2026exact}, the following constraint qualification, stated using the notation of this manuscript, was assumed:
\begin{definition}\label{Def:rCRCQ}
    There exists $r>0$ such that at every point $x\in \mathcal F$, there exists $\tau_x>0$ satisfying the following conditions:
    \begin{itemize}
        \item[(a)] For any $y\in \{y\in \text{\rm aff}(D) \,|\, \|y-x\|\leq \tau_x\}$, it holds that $\text{\rm dim}(\{\nabla G(y)d\,|\,d\in \mathcal E\})=r$ with $\mathcal E:=\text{\rm aff}(D)-x$.
        \item[(b)] For any $y\in \{y\in D \,|\, \|y-x\| \leq \tau_x\}$, it holds that
        \begin{equation*}
            \text{\rm dim}\left(\{\nabla G(y)d \,|\, d\in \text{\rm lin}(\mathcal T_D(y))\}\right) =r
        \end{equation*}
        with $\text{\rm lin}(\mathcal T_D(y)):=\mathcal T_D(y) \cap - \mathcal T_D(y)$.
    \end{itemize}
\end{definition}
Since $D$ is convex and $\mathcal E$ is a subspace, by an easy computation, we have $\mathcal T_D(y)\subset \mathcal E$ and $\text{lin}(\mathcal T_D(y))=\mathcal N_D(y)^{\perp}$ for any $y\in D$. Consequently, we have \begin{equation}\label{Eq:resultE}
\mathcal E^{\perp} \subset \mathcal N_D(y) \quad \forall y\in D,
\end{equation}
and thus
\begin{equation}\label{Eq:resultscone}
  \text{lin}(\mathcal T_D(y)) =  \mathcal N_D(y)^{\perp} \subset (\mathcal E ^{\perp})^{\perp} =\mathcal E. 
\end{equation}
On the other hand, by \Cref{Def:rCRCQ}, for all $y\in D$ satisfying $\|y-x\|\leq \tau_x$, one has
\begin{equation*}
\text{\rm dim}
\left(\{\nabla G(y)d\,|\,d\in \mathcal E\}\right)= \text{\rm dim}\left(\{\nabla G(y)d \,|\, d\in \text{\rm lin}(\mathcal T_D(y))\}\right) =r.
\end{equation*}
It, together with \eqref{Eq:resultscone}, implies that
\begin{equation}\label{Eq:result_rCRCQ}
    \nabla G(y)\mathcal E = \nabla G(y)\text{\rm lin}(\mathcal T_D(y)) \quad \forall y\in D \ \text{satisfying}\ \|y-x\|\leq \tau_x.
\end{equation}

In the following, we show that, in the particular case where $C:=\{0\}$ and $D$ is convex, AM-regularity is weaker than \Cref{Def:rCRCQ}, employed in \cite{xiao2026exact}. Specifically, for this setting, AM-regularity defined in \Cref{Def:AsymptoticRegularity} reduces to
$ 
    \limsup_{x \to \bar x} \mathcal K(x) \subset \mathcal K(\bar x)
$ 
with 
$   \mathcal K (x) : = \text{Range} \left(\nabla G(x)^T\right)+\mathcal N_D(x)$,
 for any feasible point $x$, i.e, any point $x\in D$ satisfying $G(x)=0$.

\begin{lemma}\label{Lem:weakCQ}
    Let $\bar x \in D$ be such that $G(\bar x)=0$, i.e., $\bar x$ is feasible for the setting $C:=\{0\}$ and assume further that $D$ is convex. If \Cref{Def:rCRCQ} holds at $\bar x$, then $\bar x$ is AM-regular.
\end{lemma}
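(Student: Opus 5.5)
We need to show $\limsup_{x\to\bar x}\mathcal K(x)\subset\mathcal K(\bar x)$, where $\mathcal K(x)=\operatorname{Range}(\nabla G(x)^T)+\mathcal N_D(x)$ for $x\in D$. The plan is to rewrite $\mathcal K(y)$ near $\bar x$ as a sum of a moving subspace of constant dimension and the fixed normal cone $\mathcal N_D(y)$. Then we pass to the limit using closedness of Grassmannian limits together with outer semicontinuity of $\mathcal N_D$.

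\emph{Step 1: reduce the range term to a component in $\mathcal E$.} Let $\Pi$ be the orthogonal projector onto the subspace $\mathcal E=\mathrm{aff}(D)-\bar x$. For $y\in D$ and $\lambda$, decompose
\[
\nabla G(y)^T\lambda=\Pi\nabla G(y)^T\lambda+(I-\Pi)\nabla G(y)^T\lambda .
\]
The second part lies in $\mathcal E^\perp\subset\mathcal N_D(y)$ by \eqref{Eq:resultE}. Hence
\[
\mathcal K(y)=S(y)+\mathcal N_D(y),\qquad S(y):=\Pi\nabla G(y)^T\mathbb R^m=\bigl(\nabla G(y)\Pi\bigr)^T\mathbb R^m .
\]
The equality holds because $\mathcal E^\perp\subset \mathcal N_D(y)$ lets us absorb the complementary part, and conversely $S(y)\subset \mathcal K(y)+\mathcal E^\perp$. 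Moreover, $\dim S(y)=\operatorname{rank}(\nabla G(y)\Pi)=\dim\nabla G(y)\mathcal E=r$ for all $y\in D$ with $\|y-\bar x\|\le\tau_{\bar x}$, by \Cref{Def:rCRCQ}(a).

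\emph{Step 2: refine using (b).} Let $L(y)=\mathrm{lin}(\mathcal T_D(y))=\mathcal N_D(y)^\perp\subset\mathcal E$. By \eqref{Eq:result_rCRCQ} we have $\nabla G(y)\mathcal E=\nabla G(y)L(y)$. This is the key identity: every $v\in S(y)$ can be written as $v=v_L+v_\perp$ with $v_L\in\Pi_{L(y)}\nabla G(y)^T\mathbb R^m$ and $v_\perp\in\mathcal E\cap L(y)^\perp\subset\mathcal N_D(y)$, where the last inclusion holds because $\mathcal N_D(y)\supset$ the subspace $L(y)^\perp\cap\mathrm{span}\,\mathcal N_D(y)$. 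The constant rank is what we actually need, though, so we will mainly use Step 1.

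\emph{Step 3: limit argument.} Take $x^k\to\bar x$ in $D$ and $u^k=s^k+n^k\to u$ with $s^k\in S(x^k)$ and $n^k\in\mathcal N_D(x^k)$. Since each $S(x^k)$ has dimension $r$, we may pass to a subsequence so that $S(x^k)\to S^\ast$ in the Grassmannian. By continuity of $\nabla G$, the subspace $S^\ast$ is contained in $\Pi\nabla G(\bar x)^T\mathbb R^m=S(\bar x)$; since both have dimension $r$, in fact $S^\ast=S(\bar x)$. This is exactly where constant rank prevents the collapse of the limiting subspace.

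\emph{The main obstacle is boundedness of $s^k$,} since $n^k$ and $s^k$ could diverge while cancelling each other. To handle this, use Step 2 to choose $s^k$ with $s^k\in L(x^k)$, i.e. project $s^k$ onto $L(x^k)=\mathcal N_D(x^k)^\perp$, and move the remainder into the normal cone. The remainder is in $\mathcal E\cap L(x^k)^\perp$, which equals $\mathrm{span}\,\mathcal N_D(x^k)\cap\mathcal E$. One must argue that adding it keeps $n^k$ in $\mathcal N_D(x^k)$. Doing so requires exploiting that $\nabla G(x^k)\mathcal E=\nabla G(x^k)L(x^k)$, which gives $\Pi_{L}\nabla G^T\lambda$ the same range dimension $r$, so that $S(x^k)$ can be replaced by $S_L(x^k):=\Pi_{L(x^k)}\nabla G(x^k)^T\mathbb R^m\subset L(x^k)$ at the cost of a term in $\mathcal E^\perp$ plus $L^\perp$-components representable through $\lambda'$ with the same image. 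Once $s^k\perp n^k$ holds (since $n^k\in\mathcal N_D(x^k)\subset L(x^k)^\perp$), we get $\|s^k\|^2+\|n^k\|^2=\|u^k\|^2$, so both sequences are bounded. Passing to limits, $s^k\to s\in S(\bar x)$ by the Grassmannian convergence, and $n^k\to n\in\mathcal N_D(\bar x)$ by outer semicontinuity \eqref{Eq:robustlimnormalcone}. Therefore $u=s+n\in\mathcal K(\bar x)$, which proves AM-regularity at $\bar x$.
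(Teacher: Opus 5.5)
Your Step 1 is correct and matches the paper's reduction. The paper splits $\lambda^k$ into a part in $\nabla G(x^k)\mathcal E$ and a part orthogonal to it, then absorbs $\nabla G(x^k)^T\lambda_2^k\in\mathcal E^\perp$ into $\mathcal N_D(x^k)$; your projector $\Pi$ does the same on the $\mathbb R^n$ side.

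The gap is in the boundedness step. You replace $S(x^k)$ by $S_L(x^k)=\Pi_{L(x^k)}\nabla G(x^k)^T\mathbb R^m$ so that $s^k\perp n^k$. This relies on $\mathcal E\cap L(y)^\perp\subset\mathcal N_D(y)$, which is false in general. Indeed $L(y)^\perp=\operatorname{span}\mathcal N_D(y)$, while the largest subspace inside the cone $\mathcal N_D(y)$ is $\mathcal N_D(y)\cap(-\mathcal N_D(y))=\mathcal E^\perp$. So the inclusion holds only when $L(y)=\mathcal E$, i.e.\ at relative interior points, and the identity $\mathcal K(y)=S_L(y)+\mathcal N_D(y)$ fails too.

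A concrete counterexample: take $D=\mathbb R\times\mathbb R_+$, $G(x)=x_1+x_2$, $\bar x=0$. Both parts of \Cref{Def:rCRCQ} hold with $r=1$, and $(1,1)=\nabla G(0)^T 1\in\mathcal K(0)$. But $S_L(0)+\mathcal N_D(0)=\mathbb R\times\mathbb R_-$ does not contain $(1,1)$. So an orthogonal representation $u^k=s^k+n^k$ need not exist, and the Pythagorean bound is unavailable. Your sentence ``one must argue that adding it keeps $n^k$ in $\mathcal N_D(x^k)$'' marks exactly the step that cannot be carried out.

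The missing idea is to get boundedness by contradiction, using (b) only at $\bar x$ to rule out cancellation; this is what the paper does.
\begin{itemize}
\item Keep your Step 1 representation $u^k=s^k+n^k$ and suppose $\|(s^k,n^k)\|\to\infty$.
\item Normalize and pass to a subsequence. This gives $(\bar s,\bar n)\neq0$ with $\bar s\in S(\bar x)$ (by your constant-rank subspace convergence), $\bar n\in\mathcal N_D(\bar x)$, and $\bar s=-\bar n$.
\item Write $\bar s=\Pi\nabla G(\bar x)^T\mu$. For $d\in L(\bar x)\subset\mathcal E$ you have $\langle\bar n,d\rangle=0$, hence $\langle\mu,\nabla G(\bar x)d\rangle=\langle\bar s,d\rangle=0$.
\item Therefore $\mu\perp\nabla G(\bar x)L(\bar x)=\nabla G(\bar x)\mathcal E$. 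This forces $\bar s\perp\mathcal E$, and since $\bar s\in\mathcal E$, you get $\bar s=0=\bar n$, a contradiction.
\end{itemize}
Once boundedness is established, your limit passage goes through.

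A smaller point: continuity of $\nabla G$ gives $S(\bar x)\subset S^\ast$, not $S^\ast\subset S(\bar x)$. Equality then follows from $\dim S^\ast=\dim S(\bar x)=r$. The inclusion $S^\ast\subset S(\bar x)$ is the one you actually need, and it is obtained only this way.
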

\begin{proof}
    For any $\xi\in \limsup_{x \to \bar x} \mathcal K(x)$, there exist sequences $\{\xi^k\}$ and $\{x^k\}$ satisfying $\xi^k \to \xi$ and $x^k \xrightarrow[D]{} \bar x$ such that $\xi^k \in \mathcal K(x^k)$ for all $k\in \mathbb N$. Particularly, there exist sequences $\{\lambda^k\}$ and $\{\eta^k\}$ satisfying $\lambda^k\in \mathbb R^n$ and $\eta^k\in \mathcal N_D(x^k)$ such that
\begin{equation*}
    \xi^k = \nabla G(x^k)^T \lambda ^k +\eta^k \quad \forall k\in \mathbb N.
\end{equation*}
Since $\mathcal E$ is a subspace and $\nabla G(x^k)\mathcal E$ is also, there exist $\lambda_1^k \in \nabla G(x^k)\mathcal E $ and $\lambda_2^k \in \left( \nabla G(x^k)\mathcal E \right)^{\perp}$ such that $\lambda^k=\lambda_1^k +\lambda_2^k \ \forall k\in \mathbb N$,
and consequently
\begin{equation*}
    \xi^k = \nabla G(x^k)^T \lambda_1 ^k + \nabla G(x^k)^T \lambda_2 ^k+ \eta^k \quad \forall k\in \mathbb N.
\end{equation*}
Since $\lambda_2^k \perp \nabla G(x^k)\mathcal E$, it follows from \eqref{Eq:resultE} that $\nabla G(x^k)^T \lambda_2^k \in \mathcal E^{\perp} \subset \mathcal N_D (x^k)$. Let us now set $u^k:=\nabla G(x^k)^T \lambda_2 ^k+ \eta^k$ for all $k\in \mathbb N$, since $D$ is convex, then $u^k\in \mathcal N_D(x^k)$. Meanwhile, we have
\begin{equation}\label{Eq:formxi}
    \xi^k = \nabla G(x^k)^T \lambda_1 ^k + u^k \quad \forall k\in \mathbb N.
\end{equation}

Suppose that the sequence $\{(\lambda_1^k, u^k)\}$ is unbounded. Dividing 
 \eqref{Eq:formxi} by $\|(\lambda_1^k, u^k)\|$ and taking the limit $k\to \infty$ yields that there exist a non-vanishing multiplier $(\bar \lambda_1, \bar u)$ which satisfies $\bar \lambda_1\in \mathbb R^n$ and $\bar u \in \mathcal N_D(\bar x)$ such that 
 \begin{equation}\label{Eq:0limit}
     0 =\nabla G (\bar x)^T \bar \lambda_1 + \bar u.
 \end{equation}
 Since $\bar u \in \mathcal N_D(\bar x)$, then for any $d\in \text{lin}(\mathcal T_D(\bar x))$, we have $\left<d, \bar u \right>=0$. It follows from \eqref{Eq:0limit} that
   $  0=\left<\nabla G (\bar x)^T \bar \lambda_1, d\right> = \left<\bar \lambda_1, \nabla G(\bar x)d \right>$,
 thus \eqref{Eq:formxi} implies $
     \bar \lambda_1 \perp \nabla G(\bar x) \text{lin}(\mathcal T_D(\bar x)) = \nabla G(\bar x) \mathcal E$.
 Let us recall that $\lambda_1^k \in \nabla G(x^k)\mathcal E$ and $\mathcal E$ is a subspace, then we have $\bar \lambda_1 \in \nabla G(\bar x) \mathcal E$. Therefore, $\bar \lambda_1 =0 $ and consequently $\bar u=0$ by \eqref{Eq:0limit}, which yields a contradiction with the fact that $(\bar \lambda_1, \bar u)$ is non-vanishing.

 Thus, the sequence $\{(\lambda_1^k, u^k)\}$ is bounded and therefore, possesses a convergent subsequence with limit $(\hat \lambda_1, \hat u)$ which satisfies $\hat \lambda_1 \in \mathbb R^n$ and $\hat u \in \mathcal N_D (\bar x)$. By taking the limit in \eqref{Eq:formxi} while respecting $\xi^k \to \xi$, the continuity of $\nabla G$ and $\mathcal N_D(\cdot)$, we have
 \begin{equation*}
     \xi = \nabla G(\bar x)^T\hat \lambda_1 +\hat u \in \text{Range} (\nabla G(\bar x)^T)+ \mathcal N_D(\bar x)=\mathcal K(\bar x).
 \end{equation*}
 This shows that $\bar x$ is AM-regular.
\end{proof}
\section{Algorithm and convergence}\label{Sec:alg}
This section focuses on a projection-based constraint dissolving method for solving \eqref{Eq:1}, along with the corresponding convergence analysis. 
For convenience, we define $h_{\beta}:\mathbb R^n \to \mathbb R$ as
\begin{equation}\label{Eq:h}
h_{\beta}(x):=f(\mathcal A(x))+\frac{\beta}{2}\text{dist}_C^2(G(x)).
 \end{equation}
The associated gradient is given by
\begin{equation*}
  \nabla_x h_{\beta}(x)=\nabla \mathcal A(x)^T \nabla f(\mathcal A(x)) +\beta \nabla G(x)^T \left(G(x)-P_C(G(x))\right).
\end{equation*}
Under \Cref{Ass:1} \ref{item:a} and \Cref{Ass:2} \ref{item:2a}, \ref{item:2b}, $h_{\beta}(x)$ is continuously differentiable in terms of $x$. Note that $h_{\beta}$ does not include the geometric constraint $x\in D$, which will be imposed explicitly as a constraint. The overall method is outlined in \Cref{Alg:IP}.

\begin{algorithm}[H]\caption{Constraint dissolving inexact penalty method}
	\label{Alg:IP}
    \small
	\begin{algorithmic}[1]
		\REQUIRE $\beta_0>0$ and $x_0\in D$.
            \STATE Set $k:=0$.
		\WHILE{$x^k$ does not satisfy a termination criterion}
        \STATE Compute an approximate M-stationary point $x^{k+1}$ of the subproblem
        \begin{equation}\label{Eq:subph}
\begin{aligned}
  \min_{x} \ & h_{\beta_k}(x) \\ 
  \text{s.t.} \ & x\in D,
\end{aligned}
\end{equation}
i.e., for some suitable (sufficiently small) vector $\varepsilon^{k+1}\in \mathbb R^n$, $x^{k+1}$ needs to satisfy
            \begin{equation}\label{Eq:subp}
      \varepsilon^{k+1}\in \nabla_x h_{\beta_k}(x^{k+1})+\mathcal N_D^{\text{lim}}(x^{k+1}).
            \end{equation}
            \STATE Choose $\beta_{k+1}\geq\beta_k$. Set $k=k+1$.
        \ENDWHILE
		\RETURN $x^k$
	\end{algorithmic}
\end{algorithm}
Note that the main computational burden of \Cref{Alg:IP} lies in Step 3, which can be realized by a projected gradient method, where the computation of the projection onto $D$ is necessary.

\Cref{Alg:IP}, as an inexact penalty method, always suffers from the drawback that accumulation points may be not feasible for the original problem. But, the following results illustrate some useful information in the sense that such an accumulation point is a stationary point for the constraint violation problem of \eqref{Eq:1}.
\begin{proposition}\label{Prop:feas}
    Let \Cref{Ass:2} \ref{item:2a} and \Cref{Ass:1}\ref{item:a} hold, and $\{x^k\}$ be a sequence generated by \Cref{Alg:IP} with 
$\{\varepsilon^k\}$ being bounded and $\beta_k\to \infty$. Then every accumulation point $\bar{x}$ of the sequence 
$\{x^k\}$ is an M-stationary point of the feasibility problem
\begin{equation}\label{Eq:feasliftp}
\min_{x} \;
\frac{1}{2}\operatorname{dist}_C^2\big(G(x)\big)
\quad \text{\rm s.t.}\ x\in D.
\end{equation}
\end{proposition}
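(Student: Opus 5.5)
The plan is to divide the approximate stationarity condition \eqref{Eq:subp} by $\beta_k$ and pass to the limit along a subsequence converging to $\bar x$. Let $\bar x$ be an accumulation point and $\{x^{k+1}\}_{k\in K}\to\bar x$ a corresponding subsequence. Since $\mathcal N_D^{\text{lim}}(x^{k+1})$ must be nonempty by \eqref{Eq:subp}, every $x^{k+1}$ lies in $D$, and $\bar x\in D$ because $D$ is closed. From \eqref{Eq:subp} there exist $\eta^{k+1}\in\mathcal N_D^{\text{lim}}(x^{k+1})$ with
\begin{equation*}
\varepsilon^{k+1}=\nabla\mathcal A(x^{k+1})^T\nabla f(\mathcal A(x^{k+1}))+\beta_k\nabla G(x^{k+1})^T\big(G(x^{k+1})-P_C(G(x^{k+1}))\big)+\eta^{k+1}.
\end{equation*}

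Next I divide this identity by $\beta_k>0$. Because $\mathcal N_D^{\text{lim}}(x^{k+1})$ is a cone, $\eta^{k+1}/\beta_k$ still lies in it. The term $\varepsilon^{k+1}/\beta_k$ tends to $0$ since $\{\varepsilon^k\}$ is bounded and $\beta_k\to\infty$. The term $\nabla\mathcal A(x^{k+1})^T\nabla f(\mathcal A(x^{k+1}))/\beta_k$ also tends to $0$, because $\nabla\mathcal A$, $\mathcal A$ and $\nabla f$ are continuous by \Cref{Ass:1}\ref{item:a} and \Cref{Ass:2}\ref{item:2a}, so this term is bounded on the convergent subsequence. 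This yields
\begin{equation*}
\frac{\varepsilon^{k+1}}{\beta_k}-\frac{1}{\beta_k}\nabla\mathcal A(x^{k+1})^T\nabla f(\mathcal A(x^{k+1}))\in\nabla G(x^{k+1})^T\big(G(x^{k+1})-P_C(G(x^{k+1}))\big)+\mathcal N_D^{\text{lim}}(x^{k+1}),
\end{equation*}
where the left-hand side tends to $0$ along $K$.

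The remaining step is the limit passage. The map $x\mapsto\nabla G(x)^T(G(x)-P_C(G(x)))$ is continuous, since $P_C$ is nonexpansive for convex closed $C$ and $G$ is $C^1$. By \eqref{Eq:gradientProjec} it equals $\nabla v(x)$ with $v=\frac12\operatorname{dist}_C^2\circ G$. Consequently
\begin{equation*}
\frac{\eta^{k+1}}{\beta_k}=\frac{\varepsilon^{k+1}}{\beta_k}-\frac{1}{\beta_k}\nabla\mathcal A(x^{k+1})^T\nabla f(\mathcal A(x^{k+1}))-\nabla v(x^{k+1})\to-\nabla v(\bar x).
\end{equation*}
The outer semicontinuity \eqref{Eq:robustlimnormalcone} of the limiting normal cone then gives $-\nabla v(\bar x)\in\mathcal N_D^{\text{lim}}(\bar x)$, that is,
\begin{equation*}
0\in\nabla G(\bar x)^T\big(G(\bar x)-P_C(G(\bar x))\big)+\mathcal N_D^{\text{lim}}(\bar x).
\end{equation*}
This is exactly M-stationarity of \eqref{Eq:feasliftp}, in analogy with \Cref{Def:MQ}.

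I do not expect a real obstacle here. The only delicate points are that the normalized normal-cone elements $\eta^{k+1}/\beta_k$ converge, which follows directly from the identity above because every other term converges, and that the outer semicontinuity is applied along $D$-sequences, which holds since $x^{k+1}\in D$. No constraint qualification is needed.
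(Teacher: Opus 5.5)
Your proposal is correct and follows the paper's proof essentially step for step. Both arguments divide the subproblem inclusion by $\beta_k$ using the cone property of $\mathcal N_D^{\text{lim}}(x^{k+1})$, pass to the limit along the convergent subsequence using continuity of $\nabla f$, $\mathcal A$, $\nabla\mathcal A$, $G$, $\nabla G$ and $P_C$, and conclude with the outer semicontinuity \eqref{Eq:robustlimnormalcone}. Your explicit extraction of $\eta^{k+1}$ and the remark that $x^{k+1}\in D$ make the paper's limit passage more transparent, but they do not change the argument.
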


\begin{proof}
    Let $\{x^{k+1}\}_{\mathcal I}$ be a subsequence converging to $\bar x$. \eqref{Eq:subp} is equal to
    \begin{equation}\label{Eq:samesubp}
    \begin{aligned}
  \varepsilon^{k+1}& \in \nabla \mathcal A(x^{k+1})^T \nabla f(\mathcal A(x^{k+1})) +\beta_k \nabla G(x^{k+1})^T \left(G(x^{k+1})-P_C(G(x^{k+1}))\right)\\
  &~~~~+ \mathcal N_D^{\text{lim}}(x^{k+1})
       \end{aligned}
    \end{equation}
    for all $k\in \mathbb N$.
    Since $\mathcal N_D^{\text{lim}}(x^{k+1})$ is a cone, dividing the above inclusion by $\beta_k$ yields that
    \begin{equation*}
        \begin{aligned}
      & \frac{\varepsilon^{k+1}}{\beta_k}\in \frac{1}{\beta_k}
      \nabla \mathcal A(x^{k+1})^T \nabla f(\mathcal A(x^{k+1}))\\
       &~~~~+\nabla G(x^{k+1})^T(G(x^{k+1})-P_C(G(x^{k+1}))+\mathcal N_D^{\text{lim}}(x^{k+1})
       \end{aligned}
    \end{equation*}
    for all $k\in \mathbb N$. Meanwhile, by \Cref{Ass:2} \ref{item:2a} and \Cref{Ass:1}\ref{item:a}, $\nabla f$, $\mathcal A$, $\nabla \mathcal A$, $G$, $\nabla G$, and $P_C$ are continuous. Consequently, we have $\nabla \mathcal A(x^{k+1})^T \nabla f(\mathcal A(x^{k+1})) \to_{\mathcal I} \nabla \mathcal A(\bar x)^T \nabla f(\mathcal A(\bar x))$. Therefore, taking the limit $k \to_{\mathcal I} \infty$ and using the robustness of the limiting normal cone \eqref{Eq:robustlimnormalcone}, the fact $\beta_k \to \infty$ yields that
    \begin{equation*}
        0\in\nabla G(\bar x)^T \left( G(\bar x)-P_C(G(\bar x)) \right)+\mathcal N_D^{\text{lim}}(\bar x).
    \end{equation*}
    It shows that $\bar x$ is an M-stationary point of \eqref{Eq:feasliftp}.
\end{proof}

\Cref{Prop:feas} does not establish the feasibility of accumulation points of the sequence generated by \Cref{Alg:IP}, it only guarantees their M-stationarity. Under the current assumptions, this appears to be the strongest result that can be established. As noted in \cite{kanzow2023inexact}, if the feasible set $\mathcal F$ of \eqref{Eq:1} is nonempty and the function $\text{dist}_C^2(G(x))$ is convex, then every M-stationary point is a global minimizer of the feasibility problem and, consequently, is feasible for \eqref{Eq:1}. Therefore, additional assumptions on $G$ and/or $C$ are required to guarantee feasibility. For instance, $G$ is affine.

The following result illustrates some extra conditions on the function $h_{\beta}(x)$ which can guarantee that the accumulation point is feasible.
\begin{proposition}
    Let \Cref{Ass:2} \ref{item:2a} and \Cref{Ass:1}\ref{item:a} hold, $\{x^k\}$ be a sequence generated by \Cref{Alg:IP} with $\beta_k\to \infty$ and $\bar x$ be an accumulation point of this sequence. If there exists some $B\in \mathbb R$ satisfying $h_{\beta_k}(x^{k+1}) \leq B$ for all $k\in \mathbb N$, then $\bar x$ is feasible for \eqref{Eq:1}.
\end{proposition}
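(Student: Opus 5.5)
We need to show that $\bar x\in D$ and $G(\bar x)\in C$. Fix a subsequence $\{x^{k+1}\}_{\mathcal I}$ converging to $\bar x$.

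Membership in $D$ is immediate. Each iterate satisfies \eqref{Eq:subp}, and the limiting normal cone is empty outside $D$, so $x^{k+1}\in D$ for every $k$. Since $D$ is closed (\Cref{Ass:2}\ref{item:2b}), we get $\bar x\in D$.

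For $G(\bar x)\in C$ we use the bound $h_{\beta_k}(x^{k+1})\le B$. Writing out \eqref{Eq:h}, this gives
\begin{equation*}
\frac{\beta_k}{2}\operatorname{dist}_C^2\big(G(x^{k+1})\big)\le B - f\big(\mathcal A(x^{k+1})\big) \quad \forall k\in\mathbb N.
\end{equation*}
Along $\mathcal I$, the sequence $\mathcal A(x^{k+1})$ converges to $\mathcal A(\bar x)$ because $\mathcal A$ is continuous (\Cref{Ass:1}\ref{item:a}). Since $f$ is continuous (\Cref{Ass:2}\ref{item:2a}), $f(\mathcal A(x^{k+1}))$ is bounded along $\mathcal I$, say below by some constant $-M$. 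Hence
\[
\operatorname{dist}_C^2\big(G(x^{k+1})\big)\le \frac{2(B+M)}{\beta_k}\to 0 \quad \text{as } k\to_{\mathcal I}\infty,
\]
using $\beta_k\to\infty$.

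Since $G$ is continuous and $\operatorname{dist}_C$ is $1$-Lipschitz, we obtain $\operatorname{dist}_C(G(\bar x))=0$. Because $C$ is closed, this means $G(\bar x)\in C$. Together with $\bar x\in D$, this shows $\bar x\in\mathcal F$.

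The only step that needs care is obtaining a lower bound on $f(\mathcal A(x^{k+1}))$. The bound is local: it holds along the convergent subsequence and comes from continuity, not from a global lower bound on $f$. Everything else is routine.
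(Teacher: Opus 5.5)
Your proof is correct and follows the paper's argument essentially verbatim. Both take a convergent subsequence $\{x^{k+1}\}_{\mathcal I}$ and get $\bar x\in D$ from closedness of $D$; both then combine the rearranged bound $\operatorname{dist}_C^2(G(x^{k+1}))\le 2\bigl(B-f(\mathcal A(x^{k+1}))\bigr)/\beta_k$ with continuity of $f\circ\mathcal A$ and $G$, $\beta_k\to\infty$, and closedness of $C$. The paper passes to the limit $f(\mathcal A(\bar x))$ where you bound $f(\mathcal A(x^{k+1}))$ from below along $\mathcal I$, which is an inessential variation.
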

\begin{proof}
    Since $\bar x$ is the accumulation point of $\{x^k\}$, there is a subsequence, for example, $\{x^{k+1}\}_{\mathcal I}$ for some index set $\mathcal I \subset \mathbb N$, satisfying $x^{k+1} \to_{\mathcal I} \bar x$. By \eqref{Eq:subph} and the closedness of $D$, we obtain $\bar x\in D$. It remains to verify that $G(\bar x)\in C$.

    Let us recall back \eqref{Eq:h}, that is
    \begin{equation}\label{Eq:1forfeas}
        \text{dist}_C^2 (G(x^{k+1}))=\frac{2\left(h_{\beta_k}(x^{k+1})-f(\mathcal A(x^{k+1}))\right)}{\beta_k} \leq \frac{2\left(B-f(\mathcal A(x^{k+1}))\right)}{\beta_k}
    \end{equation}
    for all $k\in \mathbb N$. In view of \Cref{Ass:1}\ref{item:a} and \Cref{Ass:2} \ref{item:2a}, both $f$ and $\mathcal A$ are continuous, which imply that $f(\mathcal A (x^{k+1}))\to_{\mathcal I} f(\mathcal A(\bar x))$. Let us take the limit as $k\to_{\mathcal I}\infty$ in \eqref{Eq:1forfeas}, the assumption $\beta_k \to \infty$ implies that $\text{dist}_C(G(\bar x))=0$ where we used the continuity of $G$ according to \Cref{Ass:2} \ref{item:2a}. Since $C$ is a closed set, it follows that $G(\bar x)\in C$. Therefore, $\bar x$ is feasible for \eqref{Eq:1}.
\end{proof}

As mentioned above, $h_{\beta}(x)$ is continuously differentiable. If we further consider $D$ is a compact set, then the subproblem \eqref{Eq:subph} always has the global minimizer. So we now consider $x^{k+1}$ as the (approximate) global minimizer of \eqref{Eq:subph}, for any $x\in \mathcal F$, one has
\begin{equation*}
    h_{\beta_k}(x^{k+1}) \leq h_{\beta_k}(x)=f(\mathcal A(x))+\frac{\beta_k}{2} \text{dist}^2_C(G(x))=f(\mathcal A(x))=f(x) \leq B
\end{equation*}
for some suitable $B$. 

The main convergence result is established as follows.
\begin{theorem}\label{Th:con}
    Let \Cref{Ass:2} and \Cref{ass:stationarity} hold. Let $\{x^k\}$ be a sequence generated by \Cref{Alg:IP} with 
$\{\varepsilon^k\}\to 0$ and $\beta_k \to \infty$. Suppose that $\bar x$ be a feasible accumulation point of this sequence, then $\bar{x}$ is an M-stationary point of the optimization problem \eqref{Eq:1}.
\end{theorem}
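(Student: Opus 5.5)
Let $\{x^{k+1}\}_{\mathcal I}$ be a subsequence converging to the feasible point $\bar x\in\mathcal F$. The plan is to show that $\bar x$ is AM-stationary for \eqref{Eq:1} and then use AM-regularity (\Cref{Ass:2}\ref{item:2c}) to pass to M-stationarity. Since $\beta_k\to\infty$, we cannot argue through \eqref{Eq:Q} with a fixed $\beta$. Instead, we absorb the penalty term into a multiplier. Set
\[
\lambda^{k}:=\beta_k\big(G(x^{k+1})-P_C(G(x^{k+1}))\big),\qquad z^{k}:=G(x^{k+1})-P_C(G(x^{k+1})).
\]
Then $\lambda^k\in\mathcal N_C(P_C(G(x^{k+1})))=\mathcal N_C(G(x^{k+1})-z^k)$ by the projection characterization for the convex set $C$. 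Moreover, $z^k\to_{\mathcal I}G(\bar x)-P_C(G(\bar x))=0$ by feasibility of $\bar x$ and continuity of $G$ and $P_C$. With this choice, \eqref{Eq:subp} reads
\[
\varepsilon^{k+1}\in \nabla\mathcal A(x^{k+1})^T\nabla f(\mathcal A(x^{k+1}))+\nabla G(x^{k+1})^T\lambda^k+\mathcal N_D^{\text{lim}}(x^{k+1}).
\]

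Next we replace $\nabla\mathcal A(x^{k+1})^T\nabla f(\mathcal A(x^{k+1}))$ by $\nabla f(x^{k+1})$ plus a term of the form $\nabla G^T(\text{normal vector})$ plus a vanishing error. By continuity, $\nabla\mathcal A(x^{k+1})^T\nabla f(\mathcal A(x^{k+1}))\to\nabla\mathcal A(\bar x)^T\nabla f(\bar x)$, using $\mathcal A(\bar x)=\bar x$. Set $w:=\nabla f(\bar x)$.

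The obstacle is that, unlike in \Cref{Th:AM-statEquiv}, we cannot directly take limits to get $0\in\nabla\mathcal A(\bar x)^Tw+\mathcal N_D^{\text{lim}}(\bar x)$, because $\nabla G(x^{k+1})^T\lambda^k$ may be unbounded. Two routes are available.

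\emph{Route (a)} avoids \Cref{Prop:nablaA}\ref{item:2prop} entirely. Every $d\in\mathcal T_{\mathcal F}(\bar x)$ satisfies $\nabla\mathcal A(\bar x)d=d$ by \Cref{Prop:1}. Hence $\nabla\mathcal A(\bar x)^Tw-w\in(\mathcal T_{\mathcal F}(\bar x))^\circ=\mathcal N_{\mathcal F}(\bar x)\subset\nabla G(\bar x)^T\mathcal N_C(G(\bar x))+\mathcal N_D^{\text{lim}}(\bar x)$, using \eqref{Eq:NormalconeFeasi}, which holds under AM-regularity. This is exactly the argument in the proof of \Cref{Prop:nablaA}\ref{item:2prop} before the hypothesis $0\in\nabla\mathcal A(x)^Tw+\mathcal N_D^{\text{lim}}(x)$ was used. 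So we can write $\nabla\mathcal A(\bar x)^Tw=w+\nabla G(\bar x)^T\mu+q$ with $\mu\in\mathcal N_C(G(\bar x))$ and $q\in\mathcal N_D^{\text{lim}}(\bar x)$.

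The only remaining issue is that $q$ lives at $\bar x$, not at $x^{k+1}$. To handle it, I would use the AM-stationarity definition with the set-valued sum $\mathcal M(x,z)$. Write
\[
\varepsilon^{k+1}-e^k-\nabla f(x^{k+1})\in \nabla G(x^{k+1})^T\big(\lambda^k+\mu\big)+\mathcal N_D^{\text{lim}}(x^{k+1})+q,
\]
where $e^k:=\nabla\mathcal A(x^{k+1})^T\nabla f(\mathcal A(x^{k+1}))-\nabla f(x^{k+1})-\nabla G(x^{k+1})^T\mu-q$ satisfies $e^k\to_{\mathcal I}0$. Then $\lambda^k+\mu$ must lie in $\mathcal N_C(G(x^{k+1})-\tilde z^k)$ for a suitable $\tilde z^k\to0$. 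This holds when $P_C(G(x^{k+1}))$ and $G(\bar x)$ share a normal direction, which is not automatic, so this step needs care. The additive $q\notin\mathcal N_D^{\text{lim}}(x^{k+1})$ is likewise problematic.

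\emph{Route (b)} handles both defects by working directly with the AM-regularity limsup instead of forcing the exact AM-stationary form. Define
\[
\xi^k:=\varepsilon^{k+1}-\nabla\mathcal A(x^{k+1})^T\nabla f(\mathcal A(x^{k+1}))\in\mathcal M(x^{k+1},z^k),
\]
which converges along $\mathcal I$ to $-\nabla\mathcal A(\bar x)^Tw$. AM-regularity gives $-\nabla\mathcal A(\bar x)^Tw\in\mathcal M(\bar x,0)=\nabla G(\bar x)^T\mathcal N_C(G(\bar x))+\mathcal N_D^{\text{lim}}(\bar x)$. Applying $\nabla\mathcal A(\bar x)^T$ and using the invariance in \Cref{Prop:nablaA}\ref{item:1prop}, the left side becomes $-(\nabla\mathcal A(\bar x)^T)^2w$ and the right side lies in $\mathcal N_D^{\text{lim}}(\bar x)$.

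To close the argument, I would combine this with the decomposition from route (a), $\nabla\mathcal A(\bar x)^Tw-w\in\mathcal M(\bar x,0)$. Adding the two memberships gives $-w\in\mathcal M(\bar x,0)+\mathcal M(\bar x,0)$. Since $\mathcal M(\bar x,0)$ is a sum of a convex cone and a union of convex cones closed under addition per component, this sum still equals $\mathcal M(\bar x,0)$. Hence $0\in w+\nabla G(\bar x)^T\mathcal N_C(G(\bar x))+\mathcal N_D^{\text{lim}}(\bar x)$, which is M-stationarity by \Cref{Def:MStat}. The main obstacle I expect is justifying this last cone-additivity step for the nonconvex $\mathcal N_D^{\text{lim}}(\bar x)$. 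If it fails, I would instead go through \Cref{Thm:2}: establish $0\in\nabla\mathcal A(\bar x)^Tw+\mathcal N_D^{\text{lim}}(\bar x)$ by showing that the $\nabla G^T\mathcal N_C$-part of the limit is annihilated by $\nabla\mathcal A(\bar x)^T$ via \Cref{Ass:1}\ref{item:c}, and that the $\mathcal N_D^{\text{lim}}$-part is fixed by \ref{item:e}.
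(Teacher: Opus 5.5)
There is a genuine gap, and it sits in the closing step. Your ingredients are the right ones and match the paper's. With your choice of $\lambda^k$ and $z^k$, AM-regularity gives $-\nabla\mathcal A(\bar x)^Tw=\xi_1+\eta_1$, where $\xi_1\in K:=\nabla G(\bar x)^T\mathcal N_C(G(\bar x))$ and $\eta_1\in N:=\mathcal N_D^{\text{lim}}(\bar x)$. The tangent-cone argument of route (a) gives $\nabla\mathcal A(\bar x)^Tw-w=\xi_2+\eta_2$ with $\xi_2\in K$ and $\eta_2\in N$. The detour through AM-stationarity in route (a) is unnecessary.

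Adding the two decompositions gives $-w=(\xi_1+\xi_2)+(\eta_1+\eta_2)$. The identity $\mathcal M(\bar x,0)+\mathcal M(\bar x,0)=\mathcal M(\bar x,0)$ is equivalent to $N+N\subset K+N$. This fails for exactly the nonconvex sets the paper targets: each convex piece of a union is closed under addition, but the union is not. For the complementarity set $T$ at $(0,0)$, the vectors $(1,0)$ and $(0,1)$ lie in $\mathcal N_T^{\text{lim}}(0,0)$, but $(1,1)$ does not. Indeed that cone plus itself is all of $\mathbb R^2$, so with $K=\{0\}$ nothing can be concluded.

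Your fallback does not repair this. Applying $\nabla\mathcal A(\bar x)^T$ to the first decomposition and using (c)--(e) yields $-(\nabla\mathcal A(\bar x)^T)^2w=\eta_1$. That is $0\in\nabla\mathcal A(\bar x)^Tw'+N$ for $w':=\nabla\mathcal A(\bar x)^Tw$, not for $w$. Feeding this into \Cref{Thm:2} only returns $0\in\nabla\mathcal A(\bar x)^Tw+K+N$, which you already had, so the argument is circular.

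The missing idea is to apply $\nabla\mathcal A(\bar x)^T$ to the second decomposition as well and compare the two results. By (c)--(e), $\nabla\mathcal A(\bar x)^T(\nabla\mathcal A(\bar x)^Tw-w)=\eta_2$. The first decomposition gives $(\nabla\mathcal A(\bar x)^T)^2w-\nabla\mathcal A(\bar x)^Tw=-\eta_1+(\xi_1+\eta_1)=\xi_1$. Hence $\eta_2=\xi_1\in K$, so the $N$-component of the second decomposition actually lies in the convex cone $K$. Therefore $\nabla\mathcal A(\bar x)^Tw-w=\xi_1+\xi_2\in K$, and
\[
-w=\bigl(-\nabla\mathcal A(\bar x)^Tw\bigr)+\bigl(\nabla\mathcal A(\bar x)^Tw-w\bigr)=(2\xi_1+\xi_2)+\eta_1\in K+N.
\]
This uses only the convexity of $K$, because a single $N$-component survives. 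This comparison step is exactly how the paper's proof concludes. It even shows $\xi_1=0$, since $\xi_1\in K\cap N$ is both annihilated and fixed by $\nabla\mathcal A(\bar x)^T$.
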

\begin{proof}
    Let $\{x^{k+1}\}_{\mathcal I}$ be a subsequence converging to $\bar x$. \eqref{Eq:samesubp} yields that
    \begin{equation*}
    \begin{aligned}
  \varepsilon^{k+1}& \in \nabla \mathcal A(x^{k+1})^T \nabla f(\mathcal A(x^{k+1})) + \nabla G(x^{k+1})^T \mathcal N_C\left(P_C(G(x^{k+1}))\right)
  + \mathcal N_D^{\text{lim}}(x^{k+1})
       \end{aligned}
    \end{equation*}
    for all $k\in \mathbb N$. Let us set $z^k:=G(x^k)-P_C(G(x^k))$ for any $k\in \mathbb N$, then $z^{k+1} \to_{\mathcal I} 0$. Therefore, by \Cref{Ass:2} \ref{item:2c}, one has
    \begin{equation}\label{Eq:optimalityQ}
        0\in \nabla \mathcal A(\bar x)^T \nabla f(\bar x)+\nabla G(\bar x)^T \mathcal N_C(G(\bar x))+\mathcal N_D^{\text{lim}}(\bar x).
    \end{equation}
    Let us claim that $\nabla \mathcal A(\bar x)^T \nabla f(\bar x) \in \nabla f(\bar x) + \nabla G(\bar x)^T \mathcal N_C(G(\bar x))$. Since $\mathcal N_C(G(\bar x))$ and thus $\nabla G(\bar x)^T \mathcal N_C(G(\bar x))$ are convex cones, we have
      $0\in \nabla f(\bar x) + \nabla G(\bar x)^T \mathcal N_C(G(\bar x))+ \mathcal N_D^{\text{lim}}(\bar x)$.
Therefore, $\bar x$ is an M-stationary point of \eqref{Eq:1}. It remains us to verify $\nabla \mathcal A(\bar x)^T \nabla f(\bar x) \in \nabla f(\bar x) + \nabla G(\bar x)^T \mathcal N_C(G(\bar x))$. 

In view of \eqref{Eq:optimalityQ}, there exist $\xi_1\in \nabla G(\bar x)^T\mathcal N_C(G(\bar x))$ and $\eta_1 \in \mathcal N_D^{\text{lim}}(\bar x)$ such that
    $\nabla \mathcal A(\bar x)^T \nabla f(\bar x)+\xi_1+\eta_1=0$.
By \Cref{Ass:1} \ref{item:c}, we have $
    \left( \nabla \mathcal A(\bar x)^T\right)^2 \nabla f(\bar x)+\eta_1=0$.
Thus,
\begin{equation}\label{Eq:1forTh}
    \left( \nabla \mathcal A(\bar x)^T\right)^2 \nabla f(\bar x) -\nabla \mathcal A(\bar x)^T \nabla f(\bar x) = -\eta_1+\xi_1+\eta_1 = \xi_1.
\end{equation}
On the other hand, for any $d\in \mathcal T_{\mathcal F}(\bar x)$, \Cref{Prop:1} implies that
\begin{equation*}
    \left< d,  \nabla \mathcal A (\bar x)^T \nabla f(\bar x)-\nabla f(\bar x) \right> = \left<\left(\nabla \mathcal A(\bar x)-I_n \right)d, \nabla f(\bar x) \right>=0,
\end{equation*}
thus, by \eqref{Eq:NormalconeFeasi}, we have
\begin{equation*}
\begin{aligned}
    \nabla \mathcal A (\bar x)^T \nabla f(\bar x)-\nabla f(\bar x) &\in (\mathcal T_{\mathcal F}(\bar x))^{\perp} \subset (\mathcal T_{\mathcal F}(\bar x))^{\circ} =\mathcal N_{\mathcal F}(\bar x)\\
    & \subset \mathcal N^{\text{lim}}_{\mathcal F}(\bar x) \subset \nabla G(\bar x)^T \mathcal N_C (G(\bar x))+\mathcal N_D^{\text{lim}}(\bar x).
    \end{aligned}
\end{equation*}
Then there exist $\xi_2\in \nabla G(\bar x)^T \mathcal N_C (G(\bar x))$ and $\eta_2 \in \mathcal N_D^{\text{lim}}(\bar x)$ such that
\begin{equation}\label{Eq:2forTh}
    \nabla \mathcal A (\bar x)^T \nabla f(\bar x)-\nabla f(\bar x) = \xi_2+\eta_2.
\end{equation}
By \Cref{Ass:1} \ref{item:c}, we have
   $\nabla \mathcal A (\bar x)^T \left( \nabla \mathcal A (\bar x)^T \nabla f(\bar x)-\nabla f(\bar x) \right) = \eta_2$.
Combining it with \eqref{Eq:1forTh} gives $\eta_2=\xi_1$. Consequently, \eqref{Eq:2forTh}, together with the fact that $\nabla G(\bar x)^T \mathcal N_C(G(\bar x))$ is a convex cone, implies
\begin{equation*}
 \nabla \mathcal A (\bar x)^T \nabla f(\bar x)-\nabla f(\bar x)=\xi_2+\xi_1 \in \nabla G(\bar x)^T \mathcal N_C (G(\bar x)).
\end{equation*}
This completes the proof.
 \end{proof}
 
 \Cref{Alg:IP} can be regarded as an inexact penalty method, where the sequence $\{\beta_k\}\to \infty$ is used to penalize the constraint $G(x^k)\in C$. If an exact penalty method is considered instead, a fixed penalty parameter $\beta>\beta^*$ is introduced to replace the possibly increasing parameter $\beta_k$ at each iteration, where $\beta^*$ is a constant dependent on the properties of $f$ and $\mathcal A$. In this case, the accumulation point $\bar x$ given in \Cref{Th:con} can be shown by straightforward computation to be an AM-stationary point of \eqref{Eq:Q}, and by \Cref{Th:PtoQ}, consequently AM- and M- stationary point of \eqref{Eq:1}.

Let us note that the objective function of subproblems \eqref{Eq:subph} is continuously differentiable and the set $D$ is nonempty and closed. We therefore employ the general spectral gradient method proposed in \cite[Algorithm~3.1]{jia2023augmented} to solve subproblems \eqref{Eq:subph} arising in \Cref{Alg:IP}. The well-definedness of the general spectral gradient method has been established in \cite[Proposition~3.1]{jia2023augmented}. Moreover, according to \cite[Theorem~3.4]{jia2023augmented}, if the sublevel set is bounded and the inner loop always terminates, then the sequence generated by the general spectral gradient method converges (along a subsequence) to an M-stationary point of the subproblem \eqref{Eq:subph}. It provides a theoretical guarantee for computing the approximately M-stationary point of the subproblem \eqref{Eq:subph}.

\section{Implementation of constraint dissolving mappings for nonconvex sets}\label{Sub:imp}
As discussed above, the original idea of the constraint mapping $\mathcal A(x)$ stems from (Stiefel) manifold optimization \cite{xiao2024solving}, where $C:=\{0\}$. Extending this structure to a general constraint set $C$ (not necessarily equal to $\{0\}$), the construction of $\mathcal A(x)$ naturally depends on the projection $P_C(G(x))$. However, the nondifferentiability of $P_C(G(x))$ poses the significant technical challenges that remain unsolved currently. In the following of this manuscript, we therefore focus on the case $C:=\{0\}$. This setting still captures a broad class of relevant practical problems,for example, optimal control \cite{harder2021reformulation}, inverse problems \cite{basir2022physics}, and related areas \cite{higham2002computing,xiao2024dissolving,xiao2026exact,qi2006quadratically}. 

To construct a constraint dissolving mapping $\mathcal A(x)$ tailored to the constraints $G(x)=0$ and $x\in D$, we rely on the following notion. 
\begin{definition}\label{Ass:Q}
We say that $\mathcal Q: \mathbb R^n \to \mathbb R^{n \times n}$ is a projective mapping if
\begin{enumerate}[label=(\alph*)]
    \item \label{item:Qa}$\mathcal Q(x)$ is continuously differentiable,
    \item \label{item:Qb}$\mathcal Q(x)$ is symmetric positive semidefinite for any $x\in \mathbb R^n$,
    \item \label{item:Qc}$\text{Null} \left(\mathcal Q(x)\right)= \text{Span} \left(\mathcal N^{\text{lim}}_D(x) \right) $ for all $x\in D$.
\end{enumerate}
\end{definition}

Observe that the notion of a projective mapping solely relies on the set $D$, which reduces to \cite[Assumption~4.3]{xiao2026exact} when $D$ is convex. 
We summarize in \Cref{Tab:Q} the construction of projective mappings $\mathcal Q$ tailored to several widely used settings of $D$. For details on the computation, we refer to \Cref{App:1}.

\begin{table}[t]
\centering
\renewcommand{\arraystretch}{1.3}
\begin{tabular}{p{2.2cm}|l|l}
\hline
Constraint set & Formulation & Possible choices of $\mathcal Q$ \\
\hline
Disjunctive &
\(
\begin{aligned}
\big\{ &(w,v) \in \mathbb R^n \,|\, (w_i, v_i) \in T \\
&\forall i=\{1, \ldots, {n}/{2}\}\big\}
\end{aligned}
\)
&
\(
\begin{aligned}
&\mathcal Q(w,v)=\\
&c\operatorname{diag}(w_1^2,v_1^2,\ldots,w_{n/2}^2,v_{n/2}^2)
\end{aligned}
\)
\\sparsity &
$\left\{x\in \mathbb R^n \,\big|\, \|x\|_0 \leq \kappa \right\}$ &
$\mathcal Q(x)=c\operatorname{diag}(x_1^2,\ldots,x_n^2)$
\\
Low-rank &
$\left\{ X \in \mathbb R^{p \times q} \,\big|\, \text{rank} (X) \leq \kappa \right\}$ &
$\mathcal Q(X)[Y]= U_1U_1^TY+YV_1V_1^T$
\\
PSD low-rank &
$\{X\in \mathbb S^{p} \,\big|\, \text{rank} (X) \leq \kappa \}$ &
$\mathcal Q(X)[Y]=U_1U_1^TY+YU_1U_1^T$
\\
\hline
\end{tabular}%
\caption{Realizations of the projective mapping $\mathcal Q$ for several geometric constraint sets. Here
    $T:=\left\{(s,t) \,\big|\, s\in [\delta_1, \delta_2], \ t\in [\tau_1, \tau_2], \ st=0 \right\}$
 with constants $\delta_1, \delta_2$, $\tau_1$, and $\tau_2$ satisfying $-\infty \leq \delta_1, \tau_1 \leq 0$ and $0 < \delta_2, \tau_2 \leq \infty$. $c>0$ is an arbitrary constant. $\kappa$ denotes the corresponding target sparsity level  or the target rank. Let $X = U\Lambda V^T$ be the SVD of $X$, with $U=(U_1, U_2)$, $V=(V_1, V_2)$ split at the leading $\kappa$ columns and $\Lambda \in \mathbb R^{p\times q}$ diagonal. For symmetric $X$, $V=U$.}
\label{Tab:Q}
\end{table}

Based on the projective mapping $\mathcal Q(x)$, a constraint dissolving mapping $\mathcal A(x)$ for constraints $G(x)=0$ and $x\in D$ can be formulated as
\begin{equation}\label{Eq:A}
    \mathcal A(x):=x-\mathcal Q(x)\nabla G(x)^T \left( \nabla G(x)\mathcal Q(x)\nabla G(x)^T+\alpha \| G(x)\|^2 I_m\right)^{-1} G(x),
\end{equation}
where $\alpha>0$ is a constant, which is introduced to guarantee the invertibility of $ \nabla G(x)\mathcal Q(x)\nabla G(x)^T$ $+\alpha \| G(x)\|^2 I_m$, as $\mathcal Q(x)$ is positive semidefinite by \Cref{Ass:Q} \ref{item:Qb}. Let us note that \eqref{Eq:A} degenerates to \cite[(4.2)]{xiao2026exact} when $D$ is convex. Moreover, to guarantee the differentiability of $\mathcal A(x)$, we require that $G:\mathbb R^n \to \mathbb R^m$ is twice continuously differentiable. Moreover, it is necessary to verify that $\nabla G(x)\mathcal Q(x)\nabla G(x)^T$ is positive definite for all $x\in \mathcal F$ with $C:=\{0\}$ such that $\mathcal A(x)$ is well-defined. To this end, we introduce the following constraint qualification condition,
\begin{equation}\label{Eq:strictCQ}
     \text{Range}\left(\nabla G(x)^T\right) \cap \text{Span} \left(\mathcal N_D^{\text{lim}}(x)\right)=\{0\},
\end{equation}
where $x\in \mathcal F$ with $C:=\{0\}$. This condition implies NNAMCQ (no nonzero abnormal multiplier constraint qualification) by a straightforward calculation and also \Cref{Ass:2} \ref{item:2c}, see \cite[Section~5.1]{mehlitz2020asymptotic}. Furthermore, it is still weaker than \cite[Assumption~1.1 (2b)]{xiao2026exact} by \cite[Lemma~2.2]{xiao2026exact}, which was used there to ensure the positive definiteness of  $\nabla G(x)\mathcal Q(x)\nabla G(x)^T$ for all $x\in \mathcal F$ with $C:=\{0\}$. The following result shows that the same conclusion remains valid under the weaker condition \eqref{Eq:strictCQ}.
\begin{lemma}\label{Lem:Q}
     Let the constraint qualification \eqref{Eq:strictCQ} hold at any $x\in \mathcal F$ with $C:=\{0\}$. Then for any mapping $\mathcal Q:\mathbb R^n\to \mathbb R^{n \times n}$ satisfying \Cref{Ass:Q}, one has $\nabla G(x)\mathcal Q(x)\nabla G(x)^T \succ 0$ for all $x\in \mathcal F$ with $C:=\{0\}$.
\end{lemma}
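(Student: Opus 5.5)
Fix $x\in\mathcal F$ (with $C:=\{0\}$, so $G(x)=0$ and $x\in D$). We show the symmetric matrix $M:=\nabla G(x)\mathcal Q(x)\nabla G(x)^T\in\mathbb R^{m\times m}$ is positive definite. By \Cref{Ass:Q}\ref{item:Qb}, $\mathcal Q(x)$ is symmetric positive semidefinite, so $M$ is positive semidefinite. Indeed, for every $\lambda\in\mathbb R^m$,
\[
\lambda^TM\lambda=\big(\nabla G(x)^T\lambda\big)^T\mathcal Q(x)\big(\nabla G(x)^T\lambda\big)\ge 0 .
\]
It therefore suffices to show that $\lambda^TM\lambda=0$ forces $\lambda=0$.

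Suppose $\lambda^TM\lambda=0$ and set $u:=\nabla G(x)^T\lambda$. For a symmetric positive semidefinite matrix $\mathcal Q(x)$, the identity $u^T\mathcal Q(x)u=0$ implies $\mathcal Q(x)u=0$. One sees this via the square root: $\|\mathcal Q(x)^{1/2}u\|^2=0$, hence $\mathcal Q(x)u=\mathcal Q(x)^{1/2}\mathcal Q(x)^{1/2}u=0$. Thus $u\in\operatorname{Null}(\mathcal Q(x))$. Since $x\in D$, \Cref{Ass:Q}\ref{item:Qc} gives $u\in\operatorname{Span}(\mathcal N_D^{\text{lim}}(x))$. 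At the same time $u\in\operatorname{Range}(\nabla G(x)^T)$ by construction. The constraint qualification \eqref{Eq:strictCQ} then yields $u=\nabla G(x)^T\lambda=0$.

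It remains to pass from $\nabla G(x)^T\lambda=0$ to $\lambda=0$, and this is the main obstacle. Condition \eqref{Eq:strictCQ}, read literally, controls only the image $\nabla G(x)^T\lambda$, not $\lambda$ itself. So we need $\nabla G(x)^T$ to be injective, i.e., $\nabla G(x)$ has full row rank $m$. The plan is to derive this from the standing assumptions, as the paper indicates when it says \eqref{Eq:strictCQ} implies NNAMCQ. NNAMCQ for $G(x)=0$, $x\in D$ requires that $0\in\nabla G(x)^T\lambda+\mathcal N_D^{\text{lim}}(x)$ imply $\lambda=0$. Taking the zero element of the cone $\mathcal N_D^{\text{lim}}(x)$, NNAMCQ gives exactly that $\nabla G(x)^T\lambda=0$ implies $\lambda=0$, i.e., $\nabla G(x)$ has full row rank.

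If this implicit full-rank / NNAMCQ requirement is taken as part of the hypothesis, which is how the paper's remark after \eqref{Eq:strictCQ} reads, we conclude $\lambda=0$ and hence $M\succ 0$. Otherwise the statement needs that additional assumption: for a redundant constraint such as $G(x)=(g(x),g(x))$, \eqref{Eq:strictCQ} can hold while $M$ is singular. The proof would therefore state explicitly that $\nabla G(x)$ is assumed to have full row rank at feasible points.

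Since $x\in\mathcal F$ was arbitrary, $M\succ0$ holds on all of $\mathcal F$.
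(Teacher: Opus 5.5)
Your argument is the same as the paper's. The paper also takes $d$ with $\mathcal Q(x)\nabla G(x)^Td=0$, places $\nabla G(x)^Td$ in $\operatorname{Null}(\mathcal Q(x))=\operatorname{Span}(\mathcal N_D^{\text{lim}}(x))$, and applies \eqref{Eq:strictCQ}. Your square-root step only makes explicit why a singular positive semidefinite form produces such a $d$.

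The obstacle you isolate is genuine. It is exactly what the paper skips when it writes ``without loss of generality, there exists $d\in\mathbb R^m$ satisfying $0\neq\nabla G(x)^Td$''; that choice is only available if $\nabla G(x)$ has full row rank. Your counterexample is correct. Take $G(x)=(x_1,x_1)$, $C=\{0\}$, $D=\mathbb R^n$ and $\mathcal Q\equiv I_n$. Then \eqref{Eq:strictCQ} and all of \Cref{Ass:Q} hold, yet $\nabla G(x)\mathcal Q(x)\nabla G(x)^T=\begin{pmatrix}1&1\\1&1\end{pmatrix}$ at every feasible point. So the lemma is false as stated, and \eqref{Eq:A} is undefined on $\mathcal F$.

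Discard your plan to extract full row rank from the standing assumptions, because it cannot succeed. The same example has the nonzero abnormal multiplier $\lambda=(1,-1)$. Hence the paper's remark that \eqref{Eq:strictCQ} implies NNAMCQ is itself wrong without a rank assumption. The example is also AM-regular ($G$ affine, $C$ and $D$ polyhedral), so \Cref{Ass:2}\ref{item:2c} does not help either.

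The hypothesis has to be added explicitly. A clean way is to replace \eqref{Eq:strictCQ} by
\[
\nabla G(x)^T\lambda\in\operatorname{Span}\big(\mathcal N_D^{\text{lim}}(x)\big)\ \Longrightarrow\ \lambda=0 .
\]
This condition is equivalent to \eqref{Eq:strictCQ} together with full row rank of $\nabla G(x)$. By your own chain of equivalences (positive semidefiniteness, then \Cref{Ass:Q}\ref{item:Qc} at $x\in D$), it is in fact necessary and sufficient for $\nabla G(x)\mathcal Q(x)\nabla G(x)^T\succ0$. Under that hypothesis your proof is complete.
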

\begin{proof}
    By contradiction, we assume that there exists $x\in \mathcal F$ with $C:=\{0\}$ such that $\nabla G(x)\mathcal Q(x)\nabla G(x)^T$ is not full rank. Without loss of generality, there exists $d\in \mathbb R^m$ satisfying
        $0\neq \nabla G(x)^T d$ such that $ \mathcal Q(x)\nabla G(x)^Td=0$.
    By \Cref{Ass:Q} \ref{item:Qc}, we have
    \begin{equation*}
        0\neq \nabla G(x)^T d \in \text{Null} \left(\mathcal Q(x)\right)= \text{Span} \left( \mathcal N_D^{\text{lim}}(x) \right).
    \end{equation*}
    Since $\nabla G(x)^T d \in \text{Range} (\nabla G(x)^T)$, then \eqref{Eq:strictCQ} implies that $\nabla G(x)^T d =0$, yielding a contradiction. Therefore, $\nabla G(x)\mathcal Q(x)\nabla G(x)^T \succ 0$ for all $x\in \mathcal F$ with $C:=\{0\}$.
\end{proof}
We now verify that the mapping $\mathcal A$ defined in \eqref{Eq:A} satisfies \Cref{Ass:1}. 
\begin{proposition}\label{Eq:validA}
    Let \Cref{Ass:2} \ref{item:2b} hold $C:=\{0\}$ and \eqref{Eq:strictCQ} be satisfied. Then for any projective mapping $\mathcal Q:\mathbb R^n\to \mathbb R^{n \times n}$ in the sense of \Cref{Ass:Q}, the corresponding mapping $\mathcal A: \mathbb R^n \to \mathbb R^n$ defined via \eqref{Eq:A} is a constraint dissolving mapping in the sense of \Cref{Ass:1}.
\end{proposition}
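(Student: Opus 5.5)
We verify conditions (a)–(e) of the definition of a constraint dissolving mapping for $\mathcal A$ in \eqref{Eq:A}, taking $C:=\{0\}$, so that $\mathcal N_C(G(x))=\mathbb R^m$ and $\mathcal F=\{x\in D\mid G(x)=0\}$. Throughout we assume, as remarked after \eqref{Eq:A}, that $G$ is twice continuously differentiable. Write
\[
M(x):=\nabla G(x)\mathcal Q(x)\nabla G(x)^T+\alpha\|G(x)\|^2 I_m .
\]

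\emph{Well-definedness and (a), (b).} If $G(x)\neq 0$, then $M(x)\succ 0$, since the first term is positive semidefinite by \Cref{Ass:Q}\ref{item:Qb}. If $G(x)=0$ and $x\in D$, then $M(x)\succ 0$ by \Cref{Lem:Q}. The remaining points, $x\notin D$ with $G(x)=0$, need care; there $\mathcal A$ should be read on the open set $\{x\mid M(x)\succ 0\}$, which contains $D$ and a neighbourhood of $\mathcal F$. On this set $x\mapsto M(x)^{-1}$ is $C^1$, because $\mathcal Q$ and $\nabla G$ are $C^1$. Hence $\mathcal A$ is $C^1$, which gives (a). For $x\in\mathcal F$ we have $G(x)=0$, so $\mathcal A(x)=x$, which gives (b).

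\emph{Gradient on $\mathcal F$ and (c).} Write $\mathcal A(x)=x-B(x)G(x)$ with $B(x):=\mathcal Q(x)\nabla G(x)^TM(x)^{-1}$. At $x\in\mathcal F$ the product rule gives
\[
\nabla\mathcal A(x)=I_n-B(x)\nabla G(x),
\]
because every term involving derivatives of $B$ is multiplied by $G(x)=0$. Therefore
\[
\nabla\mathcal A(x)^T\nabla G(x)^T=\nabla G(x)^T-\nabla G(x)^T M(x)^{-1}\nabla G(x)\mathcal Q(x)\nabla G(x)^T .
\]
Since $M(x)=\nabla G(x)\mathcal Q(x)\nabla G(x)^T$ at $x\in\mathcal F$, the product $M(x)^{-1}\nabla G(x)\mathcal Q(x)\nabla G(x)^T$ equals $I_m$, so the right-hand side vanishes. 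This shows $\nabla\mathcal A(x)^T\nabla G(x)^T\lambda=0$ for all $\lambda\in\mathbb R^m$, which is (c).

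\emph{(d), (e), the main step.} For $y\in D$, the natural candidate is
\[
\mathcal R_{\mathcal A}(y):=\nabla\mathcal A(y)-I_n+\mathcal Q(y)\nabla G(y)^TM(y)^{-1}\nabla G(y)=-\sum_{j}G_j(y)\,\nabla\big(B(y)e_j\big).
\]
This is the gradient of $\mathcal A$ minus its "frozen" part, and it is continuous on $D$ because $G$ is $C^2$ and $\mathcal Q$ is $C^1$. It vanishes on $\mathcal F$ since each term carries a factor $G_j(y)$, which gives (d). For (e), observe that
\[
\nabla\mathcal A(y)-I_n-\mathcal R_{\mathcal A}(y)=-\mathcal Q(y)\nabla G(y)^TM(y)^{-1}\nabla G(y).
\]
Its transpose is $-\nabla G(y)^TM(y)^{-1}\nabla G(y)\mathcal Q(y)$, using the symmetry of $\mathcal Q(y)$ from \Cref{Ass:Q}\ref{item:Qb}. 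Now take $v\in\mathcal N_D^{\lim}(y)$. Then $v\in\operatorname{Span}(\mathcal N_D^{\lim}(y))=\operatorname{Null}(\mathcal Q(y))$ by \Cref{Ass:Q}\ref{item:Qc}, so $\mathcal Q(y)v=0$ and the transposed operator maps $v$ to $0$. This identity holds for every $y\in D$ at which $M(y)$ is invertible. By continuity of $M$ and \Cref{Lem:Q}, every $x\in\mathcal F$ has a radius $\omega_x>0$ such that $M(y)\succ 0$ for all $y$ with $\|y-x\|\le\omega_x$, and (e) follows.

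The main obstacle is choosing $\mathcal R_{\mathcal A}$ so that it is simultaneously continuous, zero on $\mathcal F$, and absorbs the derivative terms of $B(\cdot)$ that are not annihilated by $\mathcal Q(y)$ off $\mathcal F$. The derivative $\nabla\mathcal Q$ does not vanish on $\mathcal N_D^{\lim}$, so these terms cannot be shown to be harmless directly and must be placed inside $\mathcal R_{\mathcal A}$. This works because each of them is multiplied by a component of $G(y)$.
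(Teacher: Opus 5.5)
Your proof is correct and follows essentially the same route as the paper. It uses the same factor $J(x)=\mathcal Q(x)\nabla G(x)^T M(x)^{-1}$ (your $B$), the same choice $\mathcal R_{\mathcal A}(y)=-\mathcal D J(y)[G(y)]$, and the same annihilation of $\mathcal N_D^{\lim}(y)$ via $\operatorname{Null}(\mathcal Q(y))=\operatorname{Span}(\mathcal N_D^{\lim}(y))$. Your restriction of $\mathcal A$ to the open set $\{x \mid M(x)\succ 0\}\supset D$ is more careful than the paper, which asserts continuous differentiability on all of $\mathbb R^n$ without noting that $M(x)$ can be singular at zeros of $G$ outside $D$.
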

\begin{proof}
    Let us first show that the mapping $\mathcal A$ satisfies \Cref{Ass:1}. By \Cref{Ass:Q} \ref{item:Qa} and \Cref{Ass:2} \ref{item:2b}, $\mathcal A$ is continuously differentiable. Therefore, \Cref{Ass:1} \ref{item:a} holds.

    For any $x\in \mathcal F$ with $C:=\{0\}$, then we have $x\in D$ and $G(x)=0$, which imply that $\mathcal A(x)=x$.

    In order to verify \Cref{Ass:1} \ref{item:c}, for convenience, let us define 
    \begin{equation}\label{Eq:Jx}
        J(x):= \mathcal Q(x)\nabla G(x)^T \left( \nabla G(x)\mathcal Q(x)\nabla G(x)^T+\alpha \| G(x)\|^2 I_m\right)^{-1}.
    \end{equation}
   Then, for any $x\in \mathcal F$ with $C:=\{0\}$, we have 
   \begin{equation}\label{Eq:Jx2}
   J(x) = \mathcal Q(x)\nabla G(x)^T \left( \nabla G(x)\mathcal Q(x)\nabla G(x)^T\right)^{-1},
   \end{equation}
   which is valid in view of \Cref{Lem:Q} and consequently 
    \begin{equation}
    \begin{aligned}
    \nabla \mathcal A(x)&=I_n- \mathcal D J(x)[G(x)]-J(x)\nabla G(x)
    =I_n-J(x)\nabla G(x)  
    \end{aligned}
\end{equation}
holds for $x\in \mathcal F \ \text{with}\ C:=\{0\}$. Here, $\mathcal D J(x)$ denotes a third-order tensor, we did not explicitly formulate it since $\mathcal D J(x)[G(x)]=0$ holds for all $x$ satisfying $G(x)=0$. In view of \Cref{Ass:Q} \ref{item:Qb} and \ref{item:Qc}, we have $\mathcal Q(x)^T \mathcal N_D^{\text{lim}}(x)=\{0\}$.
Therefore, for any fixed $x\in \mathcal F$ with $C:=\{0\}$, choose arbitrarily $\eta\in \nabla \mathcal A(x)^T \nabla G(x)^T \mathcal N_C\left( G(x)\right)$, then there exists $\lambda \in \mathcal N_C\left( G(x)\right)$ such that
$ \eta =  \nabla \mathcal A(x)^T \nabla G(x)^T \lambda$. Thus, we have
\begin{equation*}
    \begin{aligned}
        \eta&=\left(I_n-\nabla G(x)^TJ(x)^T \right)\nabla G(x)^T \lambda\\
        &=\nabla G(x)^T \lambda
        -\nabla G(x)^T \left(\nabla G(x)\mathcal Q(x)^T \nabla G(x)^T \right)^{-1}\nabla G(x) \mathcal Q(x)^T\nabla G(x)^T\lambda\\
        & = \nabla G(x)^T \lambda -\nabla G(x)^T \lambda =0.
    \end{aligned}
\end{equation*}
It follows that $\nabla \mathcal A(x)^T \nabla G(x)^T \mathcal N_C\left( G(x)\right) \subset \{0\}$.
Since $0\in \mathcal N_C\left( G(x)\right)$, the inclusion
$\{0\} \subset \nabla \mathcal A(x)^T \nabla G(x)^T \mathcal N_C\left( G(x)\right)$
holds trivially. Therefore, we conclude that $\nabla \mathcal A(x)^T \nabla G(x)^T \mathcal N_C\left( G(x)\right)=\{0\}$.

Therefore, $\mathcal A(x)$ defined in \eqref{Eq:A} satisfies \Cref{Ass:1} \ref{item:a}-\ref{item:c}, it remains to verify that $\mathcal A(x)$ satisfies \Cref{Ass:1} \ref{item:d} and \ref{item:e}.

By \eqref{Eq:A} and \eqref{Eq:Jx}, we know that
\begin{equation*}
    \nabla \mathcal A(y)=I_n-\mathcal D J(y)[G(y)]-J(y)\nabla G(y) \quad \forall y\in \mathbb R^n.
\end{equation*}
Let us now define $\mathcal R_{\mathcal A}(y):=-\mathcal D J(y)[G(y)]$, which is continuous since $G(y)$ is twice continuously differentiable and $J(y)$ is continuously differentiable. For all $x \in \mathcal F$ with $C:=\{0\}$, which means $G(x)=0$, consequently we have $\mathcal R_{\mathcal A}(x):=-\mathcal D J(x)[G(x)]=0$. Thus, \Cref{Ass:1} \ref{item:d} is satisfied.

In this case, we have $ \nabla \mathcal A(y)-I_n-\mathcal R_{\mathcal A}(y)=-J(y)\nabla G(y) \quad \forall y\in \mathbb R^n$.
Let us recall $\mathcal Q(y)^T \mathcal N_D^{\text{lim}}(y)=\{0\}$ for all $y\in D$, and consequently $J(y)^T\mathcal N_D^{\text{lim}}(y)=\{0\}$ by \eqref{Eq:Jx2}. Thus 
we have
\begin{equation}\label{Eq:7}
    \begin{aligned}
        \left( \nabla \mathcal A(y)-I_n-\mathcal R_{\mathcal A}(y)\right)^T\mathcal N_D^{\text{lim}} (y)=-\nabla G(y)^T J(y)^T\mathcal N_D^{\text{lim}} (y)=\{0\} \quad \forall y\in D.
    \end{aligned}
\end{equation}
Overall, the mapping $\mathcal A$ defined in \eqref{Eq:A} satisfies \ref{item:e}. This completes the proof.
\end{proof}

\section{Numerical experiments}\label{Sec:num}
This section presents numerical results for several classes of examples introduced in \Cref{Tab:Q}, aiming to demonstrate the effectiveness of \Cref{Alg:IP} (the constraint dissolving inexact penalty method, abbreviated as CDiP), in comparison with the safeguarded augmented Lagrangian method (ALM) proposed in \cite{jia2023augmented} and the penalty decomposition algorithm (PD) studied in \cite{kanzow2023inexact}. All experiments are implemented in \textsc{MATLAB R2023B} on a MacBook Air (Apple M4, 24 GB RAM) running macOS Sonoma. 

For a fair comparison, we make every effort to align parameter settings across of the three algorithms. To be specific, all methods share the same initial point $x^0\in D$ and the same initial penalty value which is chosen as
\begin{equation*}
    \beta_0:=P_{[10^{-3}, 10^3]} \left( 10 \frac{\max \{ 1, f(x^0)\}}{\max\left\{1, \frac{1}{2}\|G(x^0)\|^2\right\}}\right),
\end{equation*}
following the strategy in \cite[p.~153]{birgin2000nonmonotone} and \cite{jia2023augmented}. The penalty parameter is then updated via $\beta_{k+1}=\eta \beta_k$
for both CDiP and PD. Since the choice of the growth rate $\eta$ is crucial for the overall performance of penalty-based methods, we customize it for each class of problems and report its value on each test problem. For ALM, the update rule of penalty parameter follows exactly that in \cite{jia2023augmented}.
Note that $\beta$ corresponds to $\rho$ in ALM from \cite{jia2023augmented} and $\tau$ in PD from \cite{kanzow2023inexact}. For notational convenience, we use $\beta$ throughout our numerical experiments. We impose an upper bound to the penalty value of $10^8$ for all three algorithms.

 In terms of termination criteria,
 CDiP and ALM adopt the same stopping condition
$\|G(x^{k+1})\| \leq \varepsilon_{\text{out}}$,
while PD uses $\|x^{k+1}-y^{k+1}\|+\|G(x^{k+1})\| \leq \varepsilon_{\text{out}}$.
    Both criteria are designed to measure the infeasibility of the iterates. Unless specified, we set $\varepsilon_{\text{out}}:=10^{-4}$ for toy examples and $10^{-5}$ for real-world problems. The exact values will be reported for each testproblem. Besides the termination criteria, we imposed a maximum of $10^7$ cumulative function evaluations. The algorithm is terminated once this limit was reached.

A key component of CDiP is the constraint dissolving technique. At each iteration $k$, the corresponding mapping is defined as in \eqref{Eq:A}:
\begin{equation*}
    \mathcal A(x^k):=x-\mathcal Q(x^k)\nabla G(x^k)^T \left( \nabla G(x^k)\mathcal Q(x^k)\nabla G(x^k)^T+\alpha \| G(x^k)\|^2 I_m\right)^{-1} G(x^k).
\end{equation*}
Here, the projective mapping $\mathcal Q(x^k)$ is chosen according to the class of the text problems, following the rules discussed in \Cref{Tab:Q}.

Regarding the subproblem solvers, both CDiP and ALM employ the spectral gradient method \cite[Algorithm~3.1]{jia2023augmented}, with parameters and stopping criterion consistent with those in \cite[Section~6]{jia2023augmented}. For PD, to ensure fairness, we used a gradient descent method with Armijo line search for the $x$-update step (instead of BFGS or L-BFGS as in \cite{kanzow2023inexact}). The subproblem solver terminates once  the tolerance $\varepsilon_{\text{inner}_k}$ is reached, unless specified, where $\varepsilon_{\text{inner}_k}:={10^{-4}}/{\sqrt{k+1}}$. 

\subsection{Complementarity-constrained examples}
\subsubsection{Academic Rosenbrock problem}
    \rm Let us first consider a simple (two-dimensional) Rosenbrock problem with an MPCC constraint,
    \begin{equation*}
    \begin{aligned}
        \min_x \ &(1-x(1))^2+100*\left(x(2)-x(1)^2\right)^2\\
        \text{s.t.} \ & e^Tx=1,\ x(1)\geq 0, \ x(2) \geq 0, \ x(1)x(2)=0.
        \end{aligned}
    \end{equation*}
    It admits the global optimal solution $x^*=(0,0)^T$. We chose $\mathcal Q$ as defined in \Cref{Tab:Q}. Based on this example, we aim to test the sensitivity of CDiP with respect to the parameter $c$ in $\mathcal Q$. To this end, we set $c\in \{0.01, 0.1, 1, 10, 100\}$ for comparison. For each setting, we generated 1000 (random) starting points, which are located near the Rosenbrock valley to avoid highly ill-conditioned directions. Specifically, we construct them as follows $
        w=\text{linspace}(-1,1, 1000)$, $v= w.^2+0.1 \ \text{rand}(1000)$,
    and set $x^0= [w_i,v_i]^T, i=1, \ldots, 1000$. The starting points were obtained by projecting $x^0$ onto $D$. We set $\eta=1.1$ and $\varepsilon_{\text{out}}=10^{-4}$, the results are listed in \Cref{Tab:resultsforRosenbrock}.  Avg.$k$ and Avg.$i$ denote the average numbers of outer and inner iterations, respectively.  ``\# at $(0,0)^T$" and ``\# at others" represent the numbers of runs converging to $(0,0)^T$ and to other points, respectively.
\begin{table}[htbp]
\centering
\renewcommand{\arraystretch}{1.2}
\begin{tabular}{|c|c|c|c|c|}
\hline
c & Avg.$k$ & Avg.$i$ &  \# at $(0,0)^T$ & \# at others \\ \hline
0.01 & 14.3 & 18.8 & 931 & 69 \\ 
0.1 & 12.4 & 13.6 & 934 & 66 \\ 
1 & 11.7 & 11.9 & 937 & 63 \\ 
10 & 11.1 & 11.2 & 940 & 60 \\
100 & 10.8 & 10.9& 942 & 58 \\
\hline
\end{tabular}
\caption{Convergence of CDiP for different values of $c$ in $\mathcal Q$. We report average numerical results for the Rosenbrock problem with an MPCC constraint over 1000 random starting points. }
\label[table]{Tab:resultsforRosenbrock}
\end{table}

\Cref{Tab:resultsforRosenbrock} indicates that CDiP is relatively insensitive to the choice of $c$ for the MPCC-constrained Rosenbrock problem. Over a wide range of values of $c$, we find that larger values of $c$ tend to reduce both the average numbers of outer and inner iterations. Moreover, the number of runs converging to $(0,0)^T$ increases slightly as the parameter $c$ increases.

\subsubsection{Obstacle problem}
    \rm We consider a discretized obstacle problem studied in \cite[Section~7.4]{harder2021reformulation} and later revised in \cite[Example~6.2]{jia2023augmented}. Let $x:=(w,v,z)$ and the problem is formulated as
    \begin{equation*}
    \begin{aligned}
        \min_x \ &f(x):=\frac{1}{2}\|w\|^2-e^Tv+\frac{1}{2}\|v\|^2\\
        \text{s.t.} \ & w \geq 0,\ -Av-w+z=0,\ v \geq 0, \ z\geq 0, \ v^Tz=0,
        \end{aligned}
    \end{equation*}
    where $A$ is a tridiagonal matrix arising from a discretization of the negative operator in one dimension, i.e., $a_{ii}=2$ for all $i$ and $a_{ij}=-1$ for all $i=j \pm 1$. $e$ denotes the all-one vector of approximate size. It can be readily verified that any feasible point satisfies the constraint qualification \eqref{Eq:strictCQ} and \Cref{Ass:2} \ref{item:2c}.

    We treated the constraint $w\geq 0$ as part of the set constraint $x\in D$ here. According to \cite[Proposition~6.41]{rockafellar1998variational}, we chose the projective mapping $\mathcal Q$ as
    \begin{equation}
        \mathcal Q(x) =
        \begin{pmatrix}
            \text{diag}({w.}^2) & 0\\
            0 & \mathcal Q(v,z),
        \end{pmatrix}
    \end{equation}
    where is $\mathcal Q(v,z)$ is defined in \eqref{Eq:Qdisjunctive} with $c=1$. 
    We set $A\in \mathbb R^{m \times m}$ with $m\in \{10, 20, 30\}$ and used the projection of all-one ($3m$-dimensional) vector onto $D$ as the starting points, the parameter growth rate $\eta=2$. We chose $\varepsilon_{\text{out}}=10^{-4}$ as the tolerance of outer loop, the results are reported in \Cref{Tabl:Obstacle}. The number of outer iterations is denoted by $k$, and $i_{\text{cum}}$ denotes the accumulated number of inner iterations. The quantity $\text{Avg}.i:=i_{\text{cum}} / k$ represents the average number of inner iterations per outer iteration. $f_\text{val}$ and $\beta$ denote the function value and the penalty value at the final iterate, respectively. The runtime is reported as ``time(s)” in seconds.

\begin{table}[htbp]
\centering
\renewcommand{\arraystretch}{1.2}
\begin{tabular}{l@{\hskip 10pt}rrrrrrr}
\hline
P.($m$) & Alg. & $k$ & $i_{\text{cum}}$ & Avg.$i$ & $f_{\text{val}}$ & $\beta$ & time($s$) \\
\hline
$m=10$ & CDiP & 14 & 19,028 & 1,359 & -3.34e-3 & 819,200 & 10.00 \\
 & ALM & 9 & 42,405 & 4,711 & -2.68e-3 & 500,000 & 10.83 \\
 & PD  & 13 & 421,161 & 32,397 & -1.96e-2 & 409,600 & 9.59 \\[4pt]
$m=20$ & CDiP & 16 & 197,449 & 12,340 & -1.04e-2 & 6553,600 & 136 \\
& ALM & 15 & 209,485 & 13,695 & -0.39e-2 & 1000,000 & 38.59\\
& PD & 14 & 4619,382 & 329,955 & -1.23e-1 & 1638,400 & 107 \\[4pt]
$m=30$  & CDiP & 16 & 673,103 & 42,068 & -4.85e-2 & 9830,400 & 659 \\
& ALM & 12 & 468,469 & 39,039& -8.88e-2 & 1500,000 & 38.71 \\
& PD & -- & -- & --& --& --& --\\
\hline
\end{tabular}
\caption{Numerical results for the discretized obstacle problem.} \label{Tabl:Obstacle}
\end{table}

When $m=10$ and $m=20$, CDiP consistently required fewer inner iterations and attained lower objective value compared to both ALM and PD. Since CDiP and ALM employed the same subproblem solver with identical parameter settings, this improvement can be attributed, at least in part, to the constraint dissolving mechanism (together with the approximate penalty parameter). When $m=30$, however, ALM performed best, while PD fails to terminate within 1200 s and its results are omitted (marked as ``--" in \Cref{Tabl:Obstacle}). The performance of CPiD deteriorates when the penalty parameter becomes large, since the objective term $f(\mathcal A(x^k))$ becomes dominated by the penalty term, reducing the benefit of constraint dissolving. Moreover, PD generally requires more inner iterations and hence smaller step sizes, suggesting that constraint dissolving can mitigate the adverse effect of large penalty parameters.

CDiP incurs additional overhead due to the repeated evaluation of the constraint dissolving mapping and its derivative at each iteration, where the latter may involve a third-order tensor or a Jacobian--vector product. Although this cost is partly offset by the reduction in inner iterations for $m=10,20$, it is still desirable to develop derivative-free methods with convergence guarantees as the subproblem solver for CDiP, in order to improve the scalability of CDiP to high-dimensional (and ill-conditioned) problems.

\subsection{Sparsity-constrained examples}
In this section, we discuss the application of our method to problems with sparsity constraints.
\subsubsection{Sparse portfolio problem}
\rm We consider the following sparse portfolio optimization problem:
\begin{equation}\label{Eq:sparsityportfolio}
\min_x \ \frac{1}{2} x^TQx - c^Tx \quad \text{s.t.}\ e^Tx=1, \ x \geq 0, \ \|x\|_0 \leq s,
\end{equation}
where $e\in \mathbb R^n$ denotes the vector of all ones, $Q$ and $c$ denote the covariance matrix and the expected returns of $n$ possible assets, and $s>0$ measures the sparsity level. Note that every feasible point satisfies the constraint qualification \eqref{Eq:strictCQ} and \Cref{Ass:2} \ref{item:2c}.

The data $Q$ and $c$ are taken from the test problem collection \cite{frangioni2007sdp}. We considered instances with $n\in \{200, 300\}$ and chose $s=5$ for $n=200$ and $s=7$ for $n=300$, with a total of 60 test problems. Since \eqref{Eq:sparsityportfolio} can be reformulated as a mixed-integer quadratic program, we use Gurobi \cite{gurobi} to obtain the benchmark solutions. The time limit of Gurobi is set to 30 minutes for each test instance. We compare CDiP with ALM and PD on all test problems. To ensure a fair comparison, all three methods employ the same warm-start strategy. Specifically, we first solve \eqref{Eq:sparsityportfolio} without the sparsity set $\{x \,|\, \|x\|_0 \leq s \}$, which reduces to a convex quadratic program and can be solved easily. The obtained solution is then projected onto the sparsity set and the resulting point is used as the initial point for the corresponding test problem. 

The optimal function values obtained by CDiP, ALM, and PD are compared with the corresponding values found by Gurobi within the prescribed time limit. For all methods, we set $\varepsilon_{\mathrm{out}}=10^{-5}$ as the termination tolerance. For CDiP and PD, the penalty parameter is chosen as $\eta=2$. Moreover, for CDiP, we set $c=0.1$ to construct$\mathcal Q$ in \Cref{Tab:Q}.
However, PD failed to terminate within 30 minutes for all test problems. Therefore, its numerical results are omitted from the subsequent comparison. \Cref{fig:portfolio} reports the objective value gaps of CDiP and ALM relative to the value returned by Gurobi.
\begin{figure}[htb]
\centering
\makebox[\textwidth][c]{%
\includegraphics[width=1.25\textwidth]{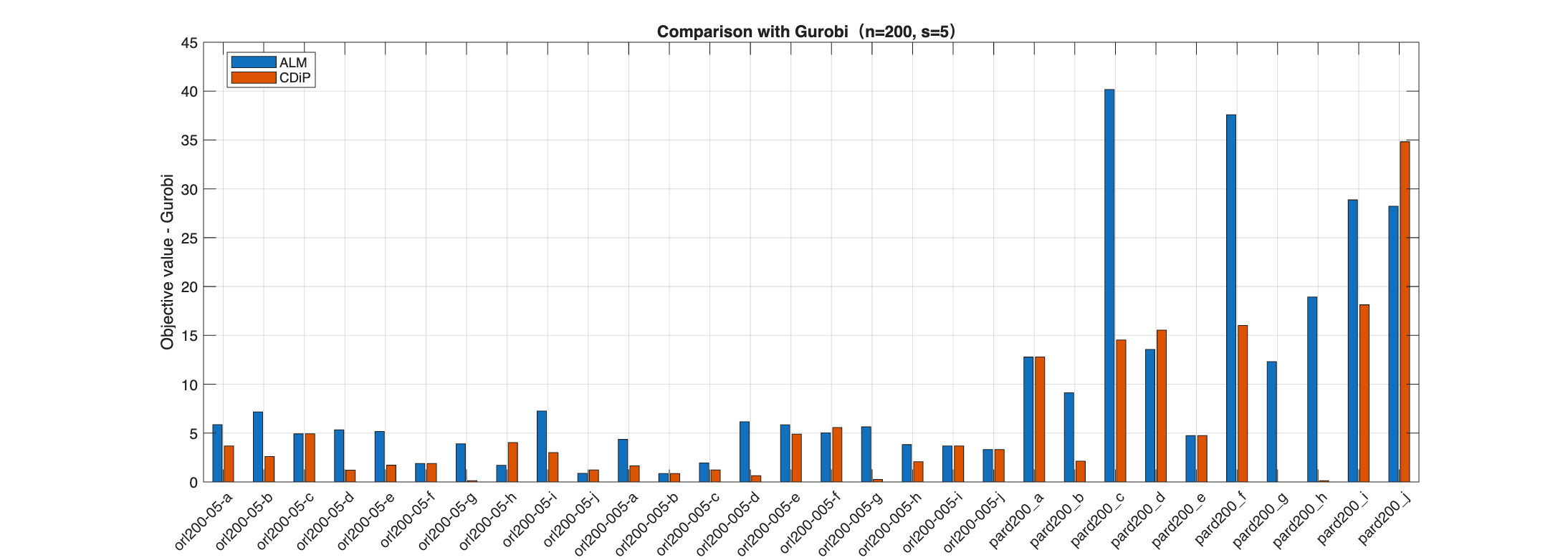}
}
\vspace{2mm}
\makebox[\textwidth][c]{%
\includegraphics[width=1.25\textwidth]{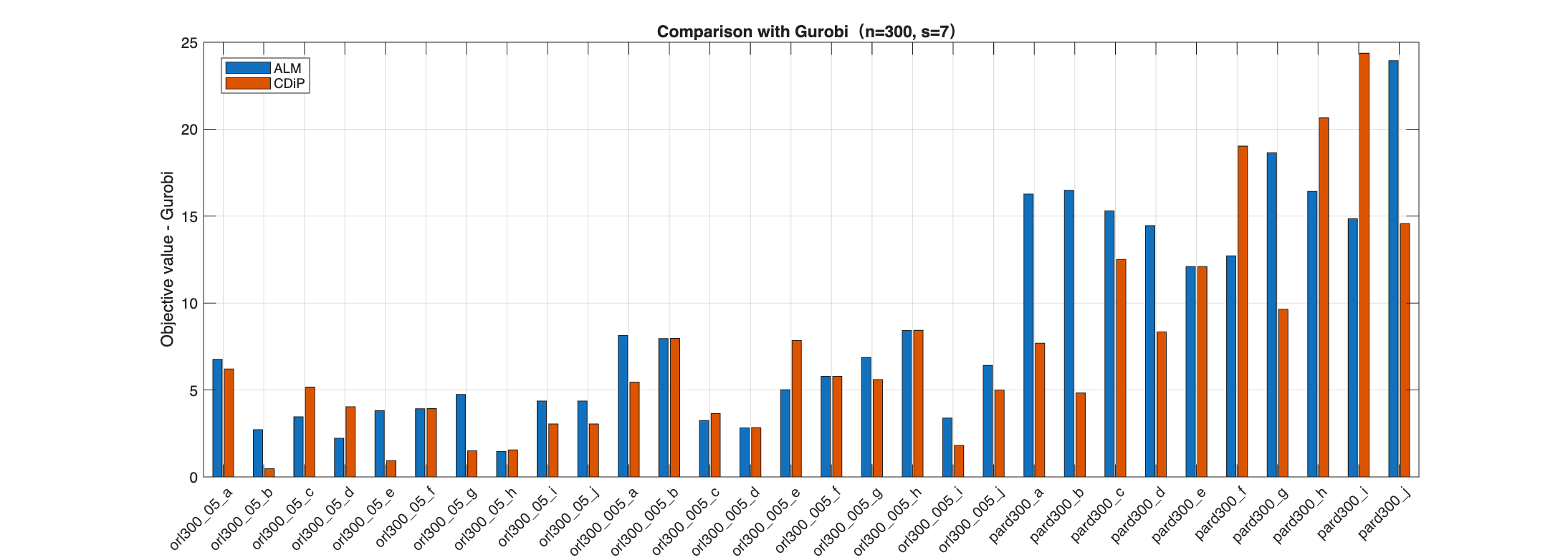}
}
\caption{Optimal function value gaps of CDiP and ALM with respect to Gurobi for test problems with $n=200$ (top) and $n=300$ (bottom).}
    \label{fig:portfolio}
\end{figure}

The results in \Cref{fig:portfolio} suggest that CDiP tended to produce lower objective values than ALM. Specifically, CDiP achieved better objective values than ALM on 18 out of 30 instances for dimension $n=200$ and on 16 out of 30 instances for dimension $n=300$. On the remaining instances, ALM produced lower objective values, but the differences from those obtained by CDiP were comparatively small. Moreover, there are 5 instances where the optimal value gap between CDiP and Gurobi is below 0.5, four of which correspond to the case $n=200$. In contrast, the smallest objective value gap achieved by ALM was approximately 0.8. Overall, these observations indicate a slight advantage of CDiP over ALM in terms of solution quality for the considered portfolio optimization problems. 
\subsubsection{Greenhouse temperature control problem}
\rm We consider a temperature control problem in a greenhouse. The goal is to maintain the greenhouse temperature close to a prescribed reference temperature under changing environmental conditions. Heating and cooling are used as the control inputs, and simultaneous heating and cooling are prohibited. 
%
Let us set $x:=(T, u_{\text{heat}}, u_{\text{cool}})\in \mathbb R^{3n}$, the corresponding decritized optimal control problem is formulated as 
\begin{equation}\label{Eq:discrGreenhous}
\begin{aligned}
    \min_{x}
\quad & \frac{1}{2}\|T-T_{\text{ref}}\|^2
\\
\text{s.t.}\quad
& T=c_1Au_{\text{heat}}-c_2 A u_{\text{cool}}+\lambda A T_{\text{env}}+ T_0b, \\
& 0\leq u_{\text{heat}}, u_{\text{cool}} \leq e, \\
& (u_{\text{heat}}, u_{\text{cool}})\in D:=\Pi_{i=1}^n D_i ,
\end{aligned}
\end{equation}
where $T, T_{\text{ref}} \in \mathbb R^n$ denote the greenhouse temperature and the desired reference temperature, respectively, and $e\in \mathbb R^n$ denotes the vector of all ones. The scalar $T_0\in \mathbb R$ denotes the inertial greenhouse temperature. $u_{\text{heat}}, u_{\text{cool}}$ represent the heating and cooling control inputs, respectively. The parameter $\lambda>0$ characterizes the heat exchange between the greenhouse and the environment, $c_1>0, c_2>0$ are the efficiency coefficients of the heating and cooling actuators, respectively. $A\in \mathbb R^{n \times n}$ and $b\in \mathbb R^n$ arise from the descretization. Here, $A$ is a strictly lower triangular matrix with entries $a_{ij}=\Delta t(1-\Delta t\lambda)^{i-j-1}$ if $i>j$ and $a_{ij}=0$ if $i \leq j$ and $b$ has components $b_i=(1-\Delta t \lambda)^{i-1}$, where $\Delta t>0$ is the corresponding descretization parameter.

To enforce that heating and cooling cannot be activated simultaneously, we define
\begin{equation*}
    D_i:=\{ ({u_{\text{heat},i}}, {u_{\text{cool},i}}) \,|\, \|({u_{\text{heat},i}}, {u_{\text{cool},i}}) \|_0 \leq 1\}.
\end{equation*}

We used the ambient temperature data in Chemnitz, Germany, from 10 May 2019 12:00:00 to 10 June 2019 12:00:00. The reference temperature in the greenhouse was set as $T_{\text{ref}}=20^\circ\mathrm{C}$. The time horizon was uniformly discretized into $n=30$ intervals, yielding a time step of $\Delta t=25.6552$h. We compared the performance of CDiP, ALM, and PD. For all three algorithms, a warm-start strategy was employed. Specifically, we first ignored the sparsity constraint, thereby obtaining a convex quadratic programming problem. The resulting solution was then projected onto $D$ and used as the starting point. The penalty update parameter was set to $\eta =10$ for both CDiP and PD, while $c=0.1$ was used to construct $\mathcal Q$ in CDiP. It is worth noting that PD, as a decomposition method, failed to produce complementary heating and cooling inputs. More specifically, simultaneous heating and cooling inputs occurred 29 times. Therefore, only the optimal inputs of CDiP and ALM are reported in
\Cref{fig:twofigsGreenhouse}.

\begin{figure}[htbp]
    \centering
    \includegraphics[width=0.48\textwidth]{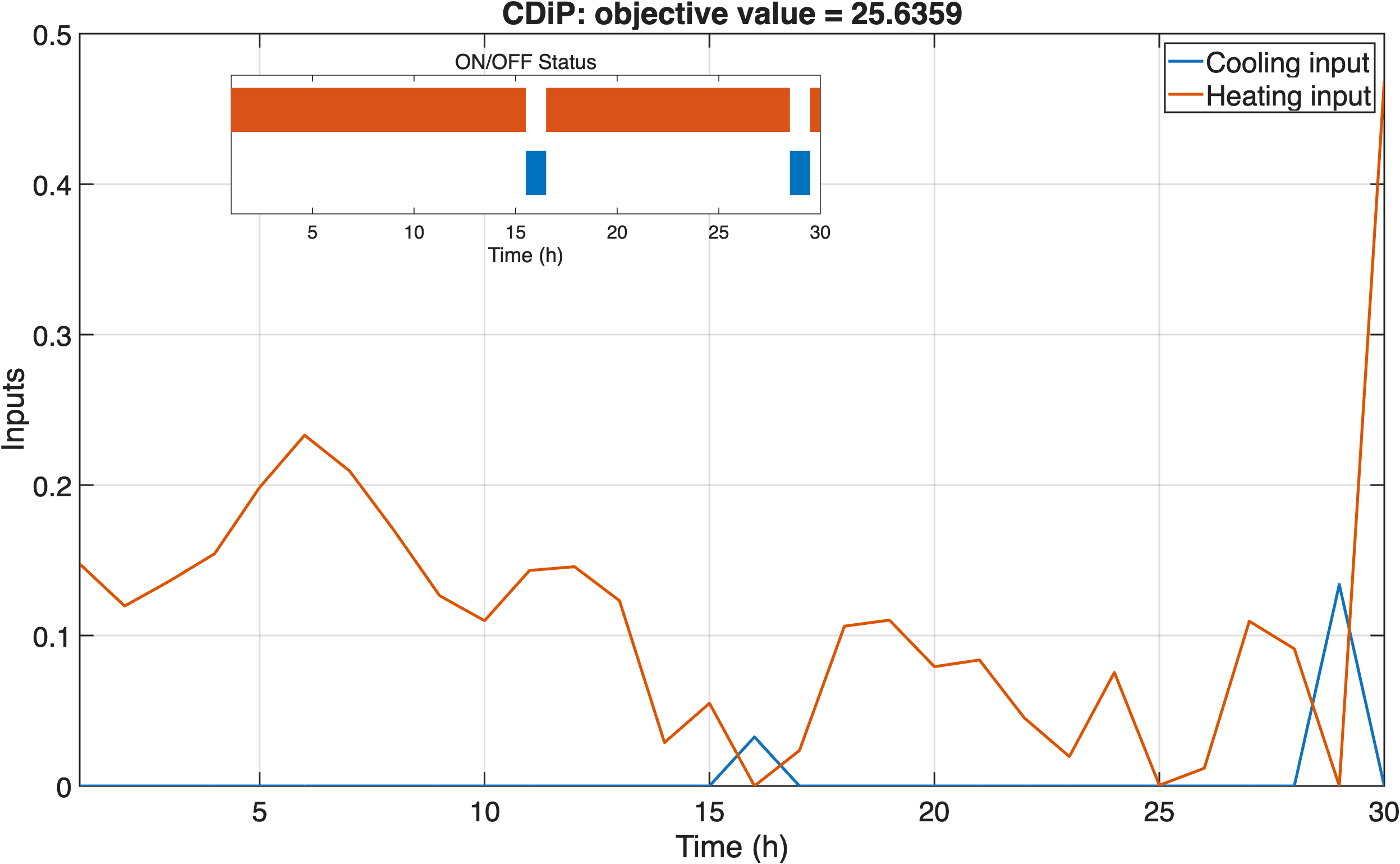}
    \includegraphics[width=0.48\textwidth]{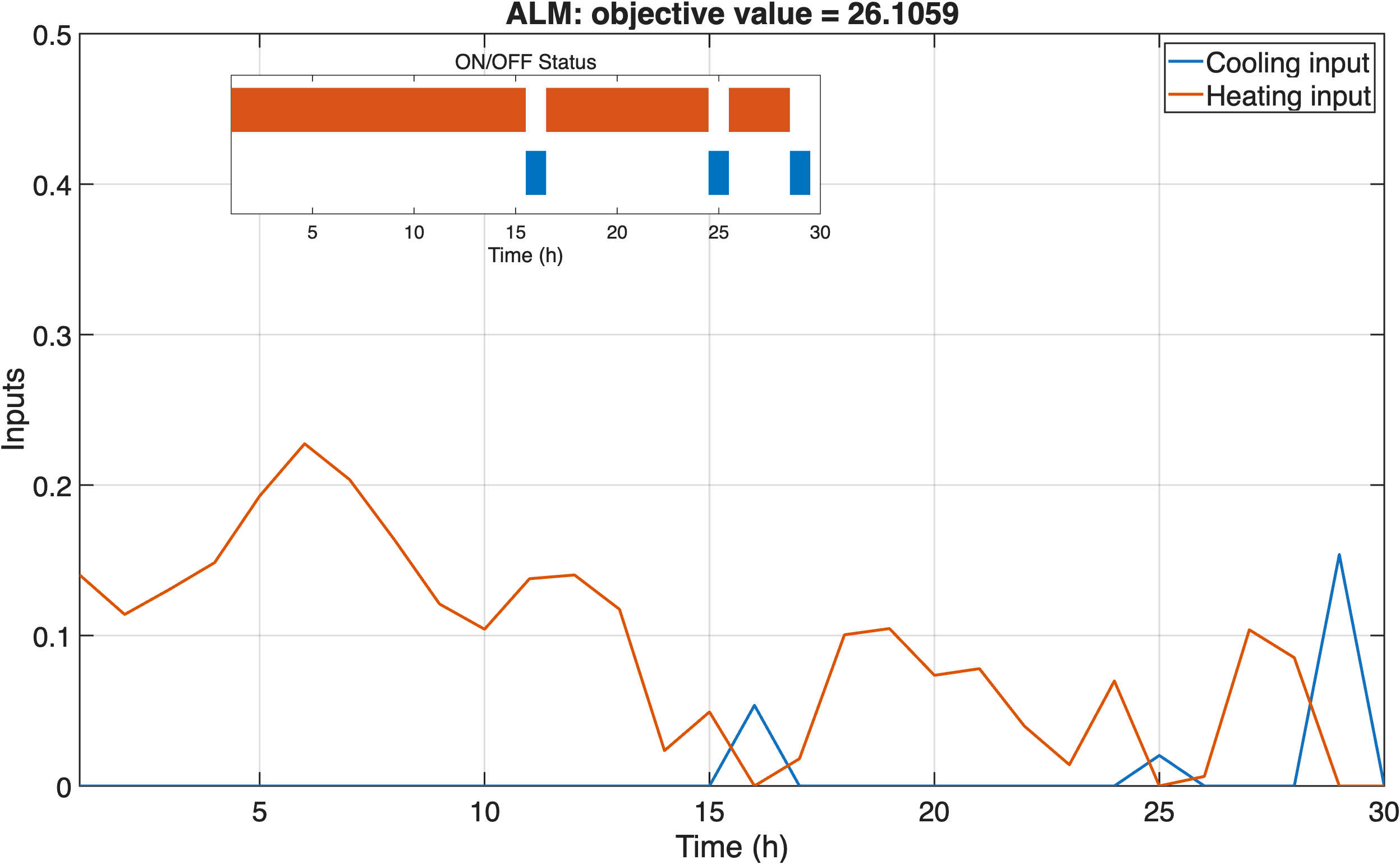}

    \caption{Comparison of the optimal heating or cooling inputs for the optimization problem \eqref{Eq:discrGreenhous} obtained by CDiP and ALM.}
    \label{fig:twofigsGreenhouse}
\end{figure}

It can be easily observed that CDiP attains a lower optimal objective value than ALM, indicating that the greenhouse temperature trajectory generated by CDiP is closer to the reference temperature than that produced by ALM. The optimal control inputs before 24 time steps produced by the two methods coincide. The optimal heating and cooling controls exhibit the desired exclusive-or behaviour. In terms of computational efficiency, CDiP required 8 outer iterations and a total of 406 inner iterations, whereas ALM required 12 outer iterations and a total of 1073 inner iterations.

\subsection{Low-rank matrix constrained examples}
\subsubsection{Low-rank constrained nearest correlation matrix problem}
    \rm Low-rank constrained nearest correlation matrix problems have many important applications in finance and engineering \cite{higham2002computing,wu2002fast,zhang2025novel}. A correlation matrix is a real symmetric positive semidefinite matrix whose diagonal entries are all equal to one. A rank-constrained nearest correlation matrix problem is formulated as
    \begin{equation*}
\begin{aligned}
\min_{X\in\mathbb{S}^n}\quad &
\frac{1}{2}\|X-A\|_F^2\\
\text{s.t.}\quad
& X_{ii}=1,\ i=1,\ldots,n,\\
& X\succeq0, \quad \operatorname{rank}(X)\le \kappa,
\end{aligned}
\end{equation*}
where $A\in \mathbb S^n$ is a given matrix, $\kappa \leq n-1$ is a prescribed rank bound, and $\|\cdot\|_F$ denotes the Frobenius norm. By setting $C:=\{0\}$ and $D:=\{X\in \mathbb S^n \,|\, X\succeq0, \operatorname{rank}(X)\le \kappa \}$, it follows that every feasible point satisfies the constraint qualification \eqref{Eq:strictCQ} and \Cref{Ass:2} \ref{item:2c}. 

Numerically, we generate $A=C+\alpha R$, where $C$ is a random matrix generated by MATLAB's \texttt{gallery
(’randcorr’,n)}, $R$ is a random symmetric matrix with
$ R_{ij} \in [-1, 1], i,j = 1, 2, \ldots, n,$ and $\alpha \in \{0.01,0.1\}$.
This data generation procedure follows \cite[Example~5.5]{qi2006quadratically}. In this experiment, we set $n=500$ and consider $\kappa \in \{10,20,30\}$.

We compared the performance of CDiP, ALM, and PD, where the stopping tolerance are set to $\varepsilon_{\text{out}}=10^{-3}$ and $\varepsilon_{\text{inner}_k}=10^{-2}$. For CDiP and PD, the penalty parameter is updated according to $\beta_{k+1}=\eta \beta_k$, where $\eta=10$ if $\beta_k <5000$ and $\eta=1.1$ otherwise. For every combination of $\kappa$ and $\alpha$, both CDiP and ALM successfully produced feasible solutions. By contrast, PD failed because the step size became excessively small, preventing further progress. Therefore, only the numerical results of CDiP and ALM are reported in \Cref{tab:LowRank-constrainedCorr}. The number of outer iterations is denoted by $k$, and $i_{\text{cum}}$ denotes the accumulated number of inner iterations. $f_\text{val}$, $\rm CV_{\text{val}}$, and $\beta$ denote the objective value, the value of constraint violation w.r.t. the diagonal equality constraint, and the penalty value at the final iterate, respectively. The runtime is reported as ``time(s)” in seconds.

\begin{table}[htbp]
\centering
\begin{tabular}{cccccccc}
\hline
$\rm P.(\kappa, \alpha)$ & Alg. & $k$ & $i_{\text{cum}}$ & $f_{\text{val}}$ & $\rm CV_{\text{val}}(10^{-4})$ & $\beta$ & time(s) \\
\hline
(10, 0.01) & CDiP& 18 & 195 & $11827$ & 9.54& 55674.8& 30.06 \\
& ALM & 27 & 120 & $11836 $ & 5.72& 133 & 13.13 \\[4pt]
(10, 0.1) & CDiP & 13 & 255 & 1153 & 9.17 & 57214.0& 29.00\\
& ALM & 20 & 173& 1154 & 9.55 & 221 & 12.63 \\ [4pt]
(20, 0.01) & CDiP & 10 & 110 & 5605 & 9.75 & 26235.0 & 18.48\\
& ALM & 17 & 121 & 5610& 8.03 & 135& 10.28 \\ [4pt]
(20, 0.1) & CDiP & 5 & 78& 5236& 9.52 & 26135.3 & 9.60\\
& ALM & 12 & 136& 5330 & 6.54 & 22 & 8.67 \\ [4pt]
(30, 0.01) & CDiP & 6 & 62 & 3541 & 9.31 & 17675.1 & 9.33\\
& ALM & 12 & 79 & 3541 & 9.53 & 13 & 5.33 \\ [4pt]
(30, 0.1) & CDiP & 4 & 40 & 3282 & 6.62 & 23733.9 & 5.25 \\
& ALM & 9 & 72 & 3283 & 6.26 & 22 & 4.75 \\
\hline
\end{tabular}
\caption{Numerical results of CDiP and ALM for the low-rank constrained nearest correlation matrix problem under different values of $\kappa$ and $\alpha$.}
\label{tab:LowRank-constrainedCorr}
\end{table}

It can be observed from \Cref{tab:LowRank-constrainedCorr} that both CDiP and ALM successfully produced feasible solutions for all test problems, with the maximum diagonal equality constraint violation on the order of $10^{-4}$. As the rank bound $\kappa$ decreases and hence the test problems become more demanding, larger penalty parameters are required by both CDiP and ALM. Note that CDiP and PD employ the same penalty update strategy, however PD failed to proceed because the step size becomes nearly zero at each outer iteration. This indicates that the constraint dissolving technique can alleviate the excessively small step sizes encountered by conventional penalty-type methods when relatively large penalty parameters are required.

In terms of numerical performance, CDiP consistently requires fewer outer iterations and attains a lower objective value than ALM. Consequently, CDiP is able to produce solutions of higher quality than ALM. Although CDiP incurs moderately higher CPU time, the additional cost mainly comes from avaluating the mapping $\mathcal A$ and its derivative during at each inner iteration. This motivates the development of derivative-free methods to reduce the computational burden of solving CDiP subproblems while preserving its favorable convergence and solution quality.

\subsubsection{Recommender system problem}
\rm  Recommender system problems, also known as Netflix problems, aim to predict users' preferences for unseen items based on a partially observed user--item rating matrix. Their fundamental challenge is that only a small portion of the ratings is available, while the remaining entries are missing and need to be recovered. To make this ill-posed problem tractable, it is commonly assumed that the underlying rating matrix has a low-rank structure \cite[Section~1.3]{markovsky2012low}.

Suppose that there are $m$ users and $n$ items, and let $A\in\mathbb{R}^{m\times n}$ denote the partially observed rating matrix and $\Omega$ be the set of observed entries.
To match the framework considered in this manuscript, we reformulate the matrix completion problem as the following problem:
\begin{equation}\label{Eq:RecSys}
\begin{aligned}
    \min_{X\in\mathbb{R}^{m\times n}}\quad &
    \frac12\|X\|_F^2\\
    \text{s.t.}\quad &
    X_{ij}=A_{ij}\ \forall (i,j)\in\Omega, \quad \operatorname{rank}(X)\le \kappa,
\end{aligned}
\end{equation}
where $\|\cdot\|_F$ denotes the Frobenius norm. Setting $D:=\{X\in \mathbb R^{m \times n} \,|\, \text{rank} (X) \leq \kappa\}$ and $C:=\{0\}$, then every feasible point satisfies the constraint qualification \eqref{Eq:strictCQ} and \Cref{Ass:2} \ref{item:2c}. 

 We used the MovieLens 100K dataset \cite{henrion2002calmness}, 
 there are 943 users, 1682 movies, and a total of 100,000 ratings from $1$ to $5$, each user has rated at least 20 movies, resulting in a sparse matrix with 93\% of the elements being $0$. The feasibility of \eqref{Eq:RecSys} depends on the prescribed rank bound $\kappa$ . If $\kappa$ is smaller than the minimum rank among all matrix completions, the problem becomes infeasible. To estimate a reasonable range for $\kappa$, we first solved
\begin{equation}\label{Eq:Schatten}
\begin{aligned}
    \min_{X\in\mathbb{R}^{m\times n}}\quad &
    \|X\|^q_{q}\\
    \text{s.t.}\quad &
    X_{ij}=A_{ij}\ \forall (i,j)\in\Omega,
\end{aligned}
\end{equation}
where $
\|\cdot\|_{q}$
denotes the Schatten-quasi-$q$ norm with $q=0.1$ by an augmented Lagrangian method invoking the Newton method. The resulting solution has rank 888, which is far too large to be regarded as a meaningful low-rank approximation. Therefore, in this example, we set $\kappa \in \{10,20,30, 50\}$ and investigated the trade-off between satisfying the equality constraints and promoting low-rank solutions.

We compared CDiP with ALM and PD in terms of constraint violation of $X_{ij}=A_{ij}$ for all $(i,j)\in\Omega$. For all three methods, the maximum number of outer iterations was set to 10, and the zero matrix was used as the initial point. For CDiP and PD, the penalty parameter was updated according to $\beta_{k+1}=\eta\beta_k$, where $\eta=2$ if $\beta_k\le 5000$ and $\eta=1.1$ otherwise. Let us note that PD stagnates after the first outer iteration due to the extremely small step sizes accepted by the line search, resulting in essentially no progress. Therefore, we only report the numerical results of CDiP and ALM, which are listed in \Cref{tab:rank_results}, where ${\text{CV}}_{\mathrm{val}}$, $f_{\mathrm{val}}$, and $\beta$ denote the maximum constraint violation, the objective value, and the penalty parameter at the final iterate, respectively.

\begin{table}[htbp]
\centering
\begin{tabular}{c|ccc|ccc}
\toprule
& \multicolumn{3}{c|}{CDiP} & \multicolumn{3}{c}{ALM} \\
\cline{2-7}
$\kappa$ & $\text{CV}_{\mathrm{val}}$ & $f_{\mathrm{val}}$ & $\beta$
         & $\text{CV}_{\mathrm{val}}$ & $f_{\mathrm{val}}$ & $\beta$ \\
\midrule
10 & 3.60 & $5.86 \times 10^6$  & 5632 & 3.78  & $6.75 \times 10^6$ & $10^8$ \\
20 & 3.24 & $5.26 \times 10^6$  &  5632 & 4.61 & $7.96 \times 10^6$  & $10^6$ \\
30 & 2.59 &  $4.56 \times 10^6$ &  5632 & 2.70 & $6.35 \times 10^6$ & $10^7$ \\
50 & 1.86 & $3.26 \times 10^6$ & 5632  & 1.95 &  $3.87 \times 10^6$ & $10^8$ \\
\bottomrule
\end{tabular}
\caption{Numerical results of CDiP and ALM for solving Netflix problems with different values of $\kappa$.}
\label{tab:rank_results}
\end{table}

As the rank bound $\kappa$ decreases, the problem \eqref{Eq:RecSys} becomes more demanding, therefore leading to larger objective values. CDiP exhibits this expected trend consistently, whereas ALM shows a noticeable irregularity at $\kappa=20$, where the objective value (and the constraint violation) increases unexpectedly. Moreover, the constraint violation produced by CDiP decreases as $\kappa$ increases. For all tested values of $\kappa$, CDiP achieves lower objective values and smaller constraint violations than ALM. In addition, CDiP terminates with the same moderate penalty parameter in all cases, whereas ALM requires substantially larger penalty parameters.

\section{Conclusions}\label{Sub:con}
In this manuscript, we developed a constraint dissolving inexact penalty method for solving problems with general set-membership constraints and geometric constraints. The corresponding theory of the constraint dissolving mapping was established under the AM-regularity condition, and the convergence of the proposed algorithm to M-stationary points was presented. Extensive numerical experiments arising from finance, control, and other fields illustrated the effectiveness of the proposed method and indicated that the constraint dissolving technique successfully alleviates the excessively small stepsize issue commonly encountered by classical (inexact) penalty-type methods when the penalty parameter becomes large.

In constructing the constraint dissolving mapping in \Cref{Sub:imp}, the nondifferentiability of the projection operator restricted our analysis and numerical experiments to equality constraints. Although this setting covers a broad range of practical applications, extending the proposed framework to general set-membership constraints remains an important direction for future research. Moreover, as mentioned in \Cref{Sec:num}, we will develop derivative-free methods for solving optimization problems involving geometric constraints with convergence guarantees to M-stationary points, which can significantly reduce the computational cost of constraint dissolving methods.

\section*{Acknowledgments}
We would like to acknowledge Nachuan Xiao for his valuable suggestions on constraint dissolving mappings.

\appendix  \label{App:1}
\section{Analysis of the Projective Mappings}
This appendix provides the analysis underlying the construction of the projective mappings $\mathcal Q$ presented in \Cref{Tab:Q}. In particular, we verify the required properties of $\mathcal Q$ for the considered settings of the geometric set $D$.

\subsubsection{Disjunctive sets}
We consider $x:=(w,v)$ where $w,v\in \mathbb R^{\frac{n}{2}}$ ($n$ is required to be even) and $D$ is given by
\begin{equation}\label{Eq:disjuncD}
    D:=\left\{(w,v)\,\big|\, (w_i, v_i) \in T \quad \forall i\in \left\{1, \ldots, \frac{n}{2}\right\}\right\},
\end{equation}
where
\begin{equation}\label{Eq:T}
    T:=\left\{(s,t) \,\big|\, s\in [\delta_1, \delta_2], \ t\in [\tau_1, \tau_2], \ st=0 \right\},
\end{equation}
with constants $\delta_1, \delta_2$, $\tau_1$, and $\tau_2$ satisfying $-\infty \leq \delta_1, \tau_1 \leq 0$ and $0 < \delta_2, \tau_2 \leq \infty$. 
By the different choices of these constants, $T$ enjoys different geometrical shapes. Notably, it covers that
\begin{itemize}[noitemsep, topsep=6pt]
    \item box-switching constraints: $\delta_1, \tau_1 \neq -\infty$, $\delta_2, \tau_2 \neq \infty$.
    \item switching constraints: $\delta_1=\tau_1:=-\infty$, $\delta_2=\tau_2:=\infty$.
    \item complementarity constraints: $\delta_1=\tau_1:=0$, $\delta_2=\tau_2:=\infty$.
    \item relaxed reformulated cardinality constraints: $\delta_1:=-\infty$, $\delta_2=\infty$, $\tau_1=0$, $\tau_2=1$.
\end{itemize}
Let us note that \cite[Section~5.1]{jia2023augmented} introduces and visualizes the above types of constraints and cites the corresponding references, interested readers can refer to it for more details. 

 It follows from \eqref{Eq:disjuncD} and \eqref{Eq:T} that $D=\prod_{i=1}^{\frac{n}{2}}T_i$. Then by \cite[Proposition~6.41]{rockafellar1998variational}, the limiting normal cone to $D$ is characterized under the different types of $T$ considered above.
\begin{proposition}\label{Prop:disjlnc}
   Let $D$ be a set defined in \eqref{Eq:disjuncD}, where $T$ is given by \eqref{Eq:T}. For any given $x:=(w,v)\in D$, the limiting normal cone satisfies
    \begin{equation}
                  \mathcal N_D^{\text{lim}}((w,v))= \prod_{i=1}^{\frac{n}{2}} \mathcal N_T^{\text{lim}}(w_i,v_i),
    \end{equation}
    where
        \begin{equation}
        \mathcal N_{T}^{\text{lim}} ((w_i, v_i))=\begin{cases}
            \{0\times \mathbb R\} \quad \ \text{if}\ w_i \neq0, v_i=0,\\
            \{\mathbb R \times 0\} \quad \ \text{if}\ w_i=0, v_i\neq 0,
        \end{cases}
    \end{equation}
    Moreover, the limiting normal cone onto $T$ at $(0,0)$ depends on the structure of $T$:
\begin{itemize}
\item If $T$ corresponds to the box-switching constraint, switching constraint, or relaxed reformulated cardinality constraint, then
\begin{equation*}
\mathcal N_T^{\mathrm{lim}}(0,0)
= (\{0\}\times \mathbb R)\ \cup\ (\mathbb R \times \{0\}).
\end{equation*}

\item If $T$ is the complementarity constraint, then
\begin{equation*}
\mathcal N_T^{\mathrm{lim}}(0,0)
= (\{0\}\times \mathbb R)\ \cup\ (\mathbb R \times \{0\})\ \cup\ (\mathbb R_{-}\times \mathbb R_{-}).
\end{equation*}
\end{itemize}
\end{proposition}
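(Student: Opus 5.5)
The plan is to reduce everything to the two-dimensional set $T$ using the product rule for limiting normal cones, and then handle $T$ by computing regular normal cones and taking limits.

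\emph{Step 1 (product structure).} Since $D=\prod_{i=1}^{n/2}T_i$ with every $T_i=T$ closed, the product formula for limiting normals in \cite[Proposition~6.41]{rockafellar1998variational} gives
\[
\mathcal N_D^{\text{lim}}((w,v))=\prod_{i}\mathcal N_T^{\text{lim}}(w_i,v_i),
\]
after reordering coordinates into the pairs $(w_i,v_i)$. This proves the first claim, and what remains is to compute $\mathcal N_T^{\lim}$ at each point of $T$.

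\emph{Step 2 (points away from the origin).} Take $(s,0)\in T$ with $s\neq0$. Then $s\in(\delta_1,\delta_2)$ or $s$ is an endpoint of $[\delta_1,\delta_2]$.
\begin{itemize}
\item In the interior case, $T$ coincides locally with the segment $\{(r,0)\}$. Near such points the regular normal cone is $\{0\}\times\mathbb R$, and hence so is the limiting one.
\item In the endpoint case (e.g.\ $s=\delta_2<\infty$), $T$ is locally a half-segment. The regular normal cone is then $\mathbb R_+\times\mathbb R$, which is strictly larger than $\{0\}\times\mathbb R$.
\end{itemize}
So the stated formula $\{0\}\times\mathbb R$ is valid only for $s$ in the open interval. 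I would either restrict the claim to that case or note the endpoint correction $\mathbb R_{\pm}\times\mathbb R$ explicitly; it does not affect $\operatorname{Span}$. The case $w_i=0$, $v_i\neq0$ is symmetric.

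\emph{Step 3 (the origin, the main obstacle).} Here the limiting cone is the union of $\mathcal N_T(0,0)$ with all limits of regular normals taken at nearby points $(s,0)$ and $(0,t)$, $s,t\neq0$. By Step 2 these nearby points contribute exactly $(\{0\}\times\mathbb R)\cup(\mathbb R\times\{0\})$. It remains to compute the regular cone $\mathcal N_T(0,0)=\mathcal T_T(0,0)^\circ$.
\begin{itemize}
\item For switching, box-switching and relaxed cardinality constraints, $\delta_1<0<\delta_2$ or $\tau_1<0<\tau_2$ holds in each coordinate that matters. The tangent cone therefore contains full lines: in the relaxed cardinality case $\delta_1=-\infty$, $\delta_2=\infty$ give the whole $s$-axis, while $t\in[0,1]$ gives only the ray $\{0\}\times\mathbb R_+$.
\item The regular cone consequently lies inside one of the axes' contributions, so nothing new is added. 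For example, in the relaxed cardinality case the polar of $(\mathbb R\times\{0\})\cup(\{0\}\times\mathbb R_+)$ is $\{0\}\times\mathbb R_-$, which is contained in $\{0\}\times\mathbb R$.
\item In the box-switching and switching cases the conditions $\delta_1,\tau_1\le0$ together with the hypotheses are meant to force both axes in, making $\mathcal N_T(0,0)=\{0\}$. Strictly, $\delta_1=0$ would only give a ray; in that situation the polar is contained in a half-axis cone and again lies inside the union.
\item For complementarity constraints, $\mathcal T_T(0,0)=(\mathbb R_+\times\{0\})\cup(\{0\}\times\mathbb R_+)$. Its polar is $\mathbb R_-\times\mathbb R_-$, which adds the quadrant.
\end{itemize}

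Taking the union of the regular cone with the limits from Step 2 gives the two stated formulas at $(0,0)$. The delicate part is the case bookkeeping in Step 3: checking, case by case, that the regular normal cone at the origin is absorbed into the union of the axes except in the complementarity case.
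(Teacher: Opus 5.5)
Your route is the same as the paper's. The paper only invokes the product rule \cite[Proposition~6.41]{rockafellar1998variational} and states the cones of $T$ without derivation, so your explicit computation supplies content the paper omits. Your Step~2 observation is correct and exposes a genuine flaw in the statement: at a finite nonzero endpoint the limiting normal cone is not a line. For example, at $(\delta_2,0)$ in the box-switching case it is $\mathbb R_+\times\mathbb R$, and at $(0,1)$ in the relaxed cardinality case ($\tau_2=1$) it is $\mathbb R\times\mathbb R_+$.

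Two of your claims are wrong, however.

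First, the endpoint correction is not harmless for the span. We have $\operatorname{Span}(\mathbb R_+\times\mathbb R)=\mathbb R^2$. At $(\delta_2,0)$ the corresponding $2\times 2$ block of \eqref{Eq:Qdisjunctive} is $c\,\mathrm{diag}(\delta_2^2,0)$, whose null space is $\{0\}\times\mathbb R$. So \Cref{Ass:Q}\ref{item:Qc} actually fails there, and the same happens at $(0,1)$ for relaxed cardinality.

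Second, your absorption argument at the origin breaks down when $\delta_1=\tau_1=0$. The paper's definition of box-switching (finite bounds only) allows this. Then $\mathcal T_T(0,0)=(\mathbb R_+\times\{0\})\cup(\{0\}\times\mathbb R_+)$, exactly as for complementarity, and $\mathcal N_T(0,0)=\mathbb R_-\times\mathbb R_-$, which is not contained in the union of the axes. Your ``half-axis'' remark covers only the case where exactly one of $\delta_1,\tau_1$ vanishes.

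The bookkeeping is cleanest done uniformly. Set $K_1:=\mathbb R$ if $\delta_1<0$ and $K_1:=\mathbb R_+$ if $\delta_1=0$, and define $K_2$ likewise from $\tau_1$. Then $\mathcal N_T(0,0)=K_1^\circ\times K_2^\circ$. Since $\delta_2,\tau_2>0$, nearby points on both axes contribute $\{0\}\times\mathbb R$ and $\mathbb R\times\{0\}$, so
\[
\mathcal N_T^{\mathrm{lim}}(0,0)=(\{0\}\times\mathbb R)\cup(\mathbb R\times\{0\})\cup(K_1^\circ\times K_2^\circ).
\]
Hence the two-axis formula holds exactly when $\min\{\delta_1,\tau_1\}<0$, and for box-switching this must be assumed (as is presumably intended). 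The quadrant appears precisely when $\delta_1=\tau_1=0$.
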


According to \Cref{Prop:disjlnc}, the projective mapping $\mathcal Q$ satisfying \Cref{Ass:Q} can be chosen as
\begin{equation}\label{Eq:Qdisjunctive}
   \mathcal Q((w, v))=c\text{diag}\left(w^2_1, v^2_1, \ldots, w^2_{\frac{n}{2}}, v^2_{\frac{n}{2}}\right),
\end{equation}
where $\text{diag}(x)$ denotes the diagonal matrix with diagonal entries given by the components of $x$ and $c>0$ is a constant.

\subsubsection{Sparsity-constrained sets}
For some fixed integer $\kappa$ with $1 \leq \kappa \leq n-1$, we consider the set
\begin{equation}\label{Eq:sparsity}
    \mathcal S_{\kappa}:=\left\{x\in \mathbb R^n \,\big|\, \|x\|_0 \leq \kappa \right\},
\end{equation}
where $\|x\|_0$ denotes the number of nonzero entries of $x$. Such sparsity (or cardinality) constrained set arises naturally in real applications, such as compressed sensing, signal processing, and machine learning, where solutions with few nonzero components are desired. 

Let us introduce its corresponding normal cone.

\begin{proposition}\label{Prop:sparsitylnc}
    \cite[Remark~3.6]{tam2017regularity} Let $D$ be a set defined in \eqref{Eq:sparsity}, then for any given $x\in D$, the limiting normal cone is 
    \begin{equation*}
        \mathcal N^{\text{lim}}_{\mathcal S_{\kappa}}(x)=
            \left\{ v\in \mathbb R^n \,\big|\, x \odot v=0, \|v\|_0 \leq n-\kappa\right\}.
    \end{equation*}
\end{proposition}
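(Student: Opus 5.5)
The plan is to compute the regular (Fréchet) normal cone of $\mathcal S_\kappa$ at every point and then pass to outer limits as in the definition of $\mathcal N^{\text{lim}}_{\mathcal S_\kappa}$. Write $I(x):=\{i \,|\, x_i\neq 0\}$. The starting observation is that $\mathcal S_\kappa$ is a finite union of coordinate subspaces,
\[
\mathcal S_\kappa=\bigcup_{|J|=\kappa} L_J,\qquad L_J:=\{y\in\mathbb R^n \,|\, y_i=0\ \forall i\notin J\}.
\]
Near a point $x$, only the subspaces $L_J$ with $J\supset I(x)$ contain points close to $x$: if $y$ is close to $x$, then $y_i\neq 0$ for all $i\in I(x)$. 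Hence locally $\mathcal S_\kappa$ coincides with $\bigcup_{J\supset I(x),|J|=\kappa}L_J$. The tangent cone $\mathcal T_{\mathcal S_\kappa}(x)$ is therefore this same union of subspaces, and the regular normal cone is its polar,
\[
\mathcal N_{\mathcal S_\kappa}(x)=\bigcap_{J\supset I(x),\,|J|=\kappa} L_J^{\perp}.
\]
This splits into two cases.

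\begin{itemize}
\item If $\|x\|_0=\kappa$, then only $J=I(x)$ occurs, and $\mathcal N_{\mathcal S_\kappa}(x)=\{v \,|\, v_i=0\ \forall i\in I(x)\}$, i.e. $x\odot v=0$.
\item If $\|x\|_0<\kappa$ (using $\kappa\le n-1$), the union of the admissible $J$ covers every index, so $\mathcal N_{\mathcal S_\kappa}(x)=\{0\}$.
\end{itemize}

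For the inclusion ``$\subset$'', take $v\in \mathcal N^{\text{lim}}_{\mathcal S_\kappa}(x)$, so there are $x^k\to x$ in $\mathcal S_\kappa$ and $v^k\to v$ with $v^k\in\mathcal N_{\mathcal S_\kappa}(x^k)$. Discard the trivial case $v^k=0$ and pass to a subsequence so that every $x^k$ has exactly $\kappa$ nonzeros with a constant support $J$. Then $v^k$ vanishes on $J$, so $\|v^k\|_0\le n-\kappa$. Because the set $\{u \,|\, u_i=0\ \forall i\in J\}$ is closed, the limit satisfies $v_i=0$ on $J$, hence $\|v\|_0\le n-\kappa$. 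Finally, $I(x)\subset J$ for large $k$ by convergence, so $x\odot v=0$.

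For the inclusion ``$\supset$'', take $v$ with $x\odot v=0$ and $\|v\|_0\le n-\kappa$. Then $I(x)$ and $\operatorname{supp}(v)$ are disjoint, and the complement of $\operatorname{supp}(v)$ has at least $\kappa$ elements. Choose $J$ with $I(x)\subset J\subset \operatorname{supp}(v)^c$ and $|J|=\kappa$, which is possible since $|I(x)|\le\kappa$. Set $x^k:=x+\tfrac1k e_J^{0}$, where $e_J^{0}$ has entries $1$ on $J\setminus I(x)$ and $0$ elsewhere. Then $\operatorname{supp}(x^k)=J$, $\|x^k\|_0=\kappa$, and $x^k\to x$. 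Since $v$ vanishes on $J$, we get $v\in\mathcal N_{\mathcal S_\kappa}(x^k)$, and the constant sequence $v^k:=v$ gives $v\in\mathcal N^{\text{lim}}_{\mathcal S_\kappa}(x)$.

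The main obstacle is the local-structure step: justifying rigorously that the tangent cone at $x$ is exactly the union of the $L_J$ with $J\supset I(x)$, and that the polar of a union of cones is the intersection of the polars. I would handle it directly from the definition of the regular normal cone. For ``$\supset$'', if $v$ is orthogonal to all of these $L_J$, then $\langle v,y-x\rangle=0$ for every $y\in\mathcal S_\kappa$ near $x$, because such a $y$ lies in some admissible $L_J$ together with $x$. For ``$\subset$'', testing the defining inequality of $\mathcal N_{\mathcal S_\kappa}(x)$ along $y=x\pm t e_i$ with $i\in J$ and $t\downarrow 0$ forces $v_i=0$.
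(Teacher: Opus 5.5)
Your proof is correct, and it takes a different route from the paper, which gives no argument at all: it imports the formula from Remark~3.6 of \cite{tam2017regularity}. There it is part of a broader computation of projections, Fréchet normal cones and limiting normal cones for sparsity sets, including the nonnegative variant $\mathcal R_\kappa$.

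You instead derive the result directly from the two definitions the paper uses, the regular normal cone $\mathcal N_D$ and its outer limit $\mathcal N^{\text{lim}}_D$. The only structural fact you need is that, near $x$, $\mathcal S_\kappa$ coincides with the finite union of coordinate subspaces $L_J$ with $J\supset I(x)$ and $|J|=\kappa$. This makes the statement transparent in two ways:
\begin{itemize}
\item Nonzero regular normals exist only at points with exactly $\kappa$ nonzeros, and that is precisely where the cardinality bound $\|v\|_0\le n-\kappa$ comes from.
\item Your recovery sequence $x^k=x+\tfrac1k e_J^{0}$ makes the ``$\supset$'' inclusion fully explicit.
\end{itemize}

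The citation buys brevity and a uniform treatment with $\mathcal R_\kappa$. There the local pieces are polyhedral cones $L_J\cap\mathbb R^n_+$ rather than subspaces. As a result, the regular normal cone at points with fewer than $\kappa$ nonzeros is no longer $\{0\}$ but $\{v \,|\, x\odot v=0,\ v\le 0\}$. Your polarity-plus-outer-limit argument adapts to that case with little change.

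One small remark: in the case $\|x\|_0<\kappa$ you only need $\kappa\le n$, so that admissible index sets $J$ exist, not $\kappa\le n-1$. The stronger bound matters only for the paper's subsequent claim that the span of this limiting normal cone is $\{v \,|\, x\odot v=0\}$.
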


Moreover, we also consider a set of non-negative sparse vectors
\begin{equation}\label{Eq:nnegspars}
    \mathcal R_{\kappa}:=\left\{x \in \mathbb R^n_+ \,\big|\, \|x\|_0 \leq \kappa\right\}.
\end{equation}
Note that the corresponding computations involving projections and Fréchet and limiting normal cones have been addressed in \cite{tam2017regularity}. Here, we only recall the limiting normal cone onto \eqref{Eq:nnegspars}.

\begin{proposition}\label{Prop:sparsitylncpositive}
    \cite[Theorem~3.4]{tam2017regularity} Let $D$ be a set defined in \eqref{Eq:nnegspars}, then for any given $x\in D$, the limiting normal cone is 
    \begin{equation*}
        \mathcal N^{\text{lim}}_{\mathcal R_{\kappa}}(x)=
            \left\{ v\in \mathbb R^n \,\big|\, x \odot v=0, \|v\|_0 \leq n-\kappa\right\} \cup  \left\{ v\in \mathbb R^n \,\big|\, x\odot v=0, v<0 \right\}.
    \end{equation*}
\end{proposition}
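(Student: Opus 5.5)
We would prove the formula directly from the definitions of the regular and limiting normal cones rather than relying only on \cite[Theorem~3.4]{tam2017regularity}. We read the condition ``$v<0$'' in the second set as $v\le 0$ componentwise; together with $x\odot v=0$, this means $v_i=0$ on $\operatorname{supp}(x)$ and $v_i\le 0$ elsewhere.

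The starting point is the decomposition
\[
\mathcal R_\kappa=\bigcup_{|I|=\kappa} L_I,\qquad L_I:=\{x\in\mathbb R^n_+ \,|\, x_j=0\ \forall j\notin I\}.
\]
This is a finite union of convex polyhedra.

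\textbf{Step 1: regular normal cone.} Fix $y\in\mathcal R_\kappa$. Near $y$, the set $\mathcal R_\kappa$ coincides with the union of the pieces $L_I$ that contain $y$, i.e.\ those with $I\supseteq\operatorname{supp}(y)$. For polyhedra, the tangent cone of a finite union is the union of the tangent cones. Taking polars, $\mathcal N_{\mathcal R_\kappa}(y)$ is the intersection of the convex normal cones $\mathcal N_{L_I}(y)$ over these active $I$.

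For a single piece,
\[
\mathcal N_{L_I}(y)=\{v \,|\, v_i=0 \text{ on } \operatorname{supp}(y),\ v_j\le 0 \text{ for } j\in I\setminus\operatorname{supp}(y),\ v_j \text{ free for } j\notin I\}.
\]
This leads to two cases.
\begin{enumerate}
\item If $\|y\|_0=\kappa$, only $I=\operatorname{supp}(y)$ is active. Hence $\mathcal N_{\mathcal R_\kappa}(y)=\{v \,|\, y\odot v=0\}$.
\item If $\|y\|_0<\kappa$, every index $j\notin\operatorname{supp}(y)$ lies in some active $I$. The intersection is therefore $\{v \,|\, y\odot v=0,\ v\le 0\}$.
\end{enumerate}

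We expect this step to be the main technical point. The crux is justifying the local identification of $\mathcal R_\kappa$ with the union of active pieces, which holds because inactive pieces are closed and do not contain $y$. One then uses the fact that the regular normal cone of a finite union of polyhedra is the polar of the union of their tangent cones.

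\textbf{Step 2: inclusion ``$\subset$''.} Take $v\in\mathcal N^{\lim}_{\mathcal R_\kappa}(x)$ with $y^k\to x$ in $\mathcal R_\kappa$ and $v^k\in\mathcal N_{\mathcal R_\kappa}(y^k)$, $v^k\to v$. Passing to a subsequence, either $\|y^k\|_0=\kappa$ for all $k$ or $\|y^k\|_0<\kappa$ for all $k$. For large $k$ we also have $\operatorname{supp}(y^k)\supseteq\operatorname{supp}(x)$.
\begin{enumerate}
\item In the first case, $v^k$ vanishes on $\operatorname{supp}(y^k)$, a set with $\kappa$ elements. Passing to a further subsequence with constant support, the limit $v$ satisfies $x\odot v=0$ and $\|v\|_0\le n-\kappa$.
\item In the second case, $v^k\le 0$ and $v^k$ vanishes on $\operatorname{supp}(x)$. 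These properties pass to the limit.
\end{enumerate}

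\textbf{Step 3: inclusion ``$\supset$''.} Let $v$ satisfy $x\odot v=0$ and $\|v\|_0\le n-\kappa$. Since $\operatorname{supp}(x)\cap\operatorname{supp}(v)=\emptyset$ and $|\operatorname{supp}(v)|\le n-\kappa$, we can choose $J\supseteq\operatorname{supp}(x)$ with $|J|=\kappa$ and $J\cap\operatorname{supp}(v)=\emptyset$. Set $y^k:=x+\tfrac1k e_J$, where $e_J$ is the indicator vector of $J$. Then $y^k\in\mathcal R_\kappa$, $\|y^k\|_0=\kappa$, and $y^k\odot v=0$, so $v\in\mathcal N_{\mathcal R_\kappa}(y^k)$ by Step 1. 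Taking $k\to\infty$ with the constant sequence $v$ gives $v\in\mathcal N^{\lim}_{\mathcal R_\kappa}(x)$.

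Now let $v\le 0$ with $x\odot v=0$.
\begin{enumerate}
\item If $\|x\|_0<\kappa$, Step 1 gives $v\in\mathcal N_{\mathcal R_\kappa}(x)\subset\mathcal N^{\lim}_{\mathcal R_\kappa}(x)$ directly.
\item If $\|x\|_0=\kappa$, then $v$ vanishes on the $\kappa$ indices of $\operatorname{supp}(x)$, so $\|v\|_0\le n-\kappa$ and $v$ already belongs to the first set.
\end{enumerate}
This completes the planned characterization.
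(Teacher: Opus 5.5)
Your proof is correct. It takes a different route from the paper, which gives no argument of its own here and simply imports the formula from \cite[Theorem~3.4]{tam2017regularity}.

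You derive the formula from scratch. You write $\mathcal R_\kappa$ as the finite union of the polyhedral faces $L_I$ of $\mathbb R^n_+$. You then obtain the regular normal cone as the polar of the union of the tangent cones of the active faces, using $\mathcal N_D=(\mathcal T_D)^{\circ}$ from \cite[Theorem~6.28(a)]{rockafellar1998variational}, which the paper already invokes elsewhere. Finally, you pass to limits with a case split on whether $\|y^k\|_0=\kappa$ or $\|y^k\|_0<\kappa$. All steps check out. In particular, the choice of $J$ in Step~3 is possible because $n-\|v\|_0\ge\kappa$ and $\operatorname{supp}(x)\subset\operatorname{supp}(v)^c$.

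The citation buys brevity and access to Tam's further regularity results. Your argument buys three things:
\begin{itemize}
\item a self-contained proof using only tools already present in the paper, with the regular normal cone as a by-product;
\item a verbatim adaptation to $\mathcal S_\kappa$: replace $L_I$ by coordinate subspaces and the sign condition disappears;
\item an immediate proof of the only consequence the paper actually uses, namely $\operatorname{Span}(\mathcal N^{\lim}_{\mathcal R_\kappa}(x))=\{v \,|\, x\odot v=0\}$. This holds because $e_j$ lies in your first set for every $j\notin\operatorname{supp}(x)$ once $\kappa\le n-1$.
\end{itemize}

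Your reading of ``$v<0$'' as $v\le 0$ is not a convenience but a necessity. Taken literally (strict componentwise), the stated formula is false. For $n=3$, $\kappa=2$, $x=0$ one has $\mathcal N_{\mathcal R_2}(0)=\mathbb R^3_-\ni(-1,-1,0)$. This vector has $\|v\|_0=2>n-\kappa$ and is not strictly negative, so it lies in neither set of the literal statement. Your Steps~1 and~3 show that the correct second set is $\{v \,|\, x\odot v=0,\ v\le 0\}$, and that this set only adds something new when $\|x\|_0<\kappa$.
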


In view of \Cref{Prop:sparsitylnc} and \Cref{Prop:sparsitylncpositive}, although the limiting normal cones onto the sets \eqref{Eq:sparsity} and \eqref{Eq:nnegspars} are different, their spans coincide, namely 
\begin{equation*}
    \text{Span} \left(\mathcal N^{\text{lim}}_{\mathcal S_{\kappa}}(x) \right)= \text{Span} \left(\mathcal N^{\text{lim}}_{\mathcal R_{\kappa}}(x) \right)=\left\{ v\in \mathbb R^n \,\big|\, x \odot v=0\right\}.
\end{equation*}
Hence, the projective mapping $\mathcal Q$ satisfying \Cref{Ass:Q} can be chosen as
\begin{equation}\label{Eq:Qsparsity}
    \mathcal Q(x)= c\text{diag}\left(x^2_1, x^2_2, \ldots, x^2_n\right),
\end{equation}
where $c>0$ is a constant.
\subsubsection{Low-rank constrained sets}
Note that the results in this manuscript remain valid when $\mathbb R^m$ and $\mathbb R^n$ are replaced by the corresponding matrix spaces. Indeed, we consider two integers $p$ and $q$ with $p, q \geq 2$, and the matrix space $\mathbb R^{p \times q}$ equipped with Frobenius inner product. Let us fix $\kappa \in \mathbb N$ satisfying $1 \leq \kappa \leq \min(p,q)-1$ and investigate the set
\begin{equation}\label{Eq:lowrankset}
    D_{\kappa}:=\left\{ X \in \mathbb R^{p \times q} \,\big|\, \text{rank} (X) \leq \kappa \right\}.
\end{equation}
The corresponding limiting normal cone has been investigated in \cite{luke2013prox}.
\begin{proposition}\cite[Theorem~3.1]{luke2013prox} \label{Prop:lrlcn}
    Let $D$ be a set defined in \eqref{Eq:lowrankset}. For a given $X\in D$ with $\text{rank}(X)=s$. Let $X=U\Lambda V^T$ be the singular value decomposition of $X$, where $U=(U_1, U_2)$, $V=(V_1, V_2)$ with $\Lambda \in \mathbb R^{p\times q}$ diagonal, $U_1 \in \mathbb R^{p \times s}$,  $U_2 \in \mathbb R^{p \times (p-s)}$, $V_1 \in \mathbb R^{q \times s}$,  and $V_2 \in \mathbb R^{q \times (q-s)}$. Then the limiting normal cone onto \eqref{Eq:lowrankset} at $X$ is
    \begin{equation*}
        \mathcal N_{D_{\kappa}}^{\text{lim}}(X):=\left\{ Y\in \mathbb R^{p\times q} \,\big|\, U_1^TY=0, YV_1=0, \text{rank} (Y) \leq k_{\max} -\kappa\right\},
    \end{equation*}
    where $k_{\max}:=\min\{p,q\}$.
\end{proposition}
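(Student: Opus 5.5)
The plan is a direct computation in two stages. First I compute the regular normal cone $\mathcal N_{D_\kappa}(X)$ at every $X\in D_\kappa$. Then I pass to limits to obtain $\mathcal N^{\lim}_{D_\kappa}(X)$ and prove the two inclusions separately. Throughout, $X=U\Lambda V^T$ with $\operatorname{rank}(X)=s\le\kappa$ and the splitting $U=(U_1,U_2)$, $V=(V_1,V_2)$ from the statement.

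\emph{Step 1 (regular normals).} If $s=\kappa$, then near $X$ the set $D_\kappa$ coincides with the smooth manifold of rank-$\kappa$ matrices. Its tangent space at $X$ is $\{U_1A^T+BV_1^T\}$, so
\[
\mathcal N_{D_\kappa}(X)=\{Y \,|\, U_1^TY=0,\ YV_1=0\}.
\]
If $s<\kappa$, the tangent cone $\mathcal T_{D_\kappa}(X)$ contains every rank-one matrix $W$, since $X+tW\in D_\kappa$ for all $t$. Rank-one matrices span $\mathbb R^{p\times q}$, so the polar of the tangent cone is $\{0\}$ and hence $\mathcal N_{D_\kappa}(X)=\{0\}$. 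Consequently only nearby rank-$\kappa$ points contribute nonzero normals in the limiting construction.

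\emph{Step 2 ($\subset$).} Let $Y^k\to Y$ with $Y^k\in\mathcal N_{D_\kappa}(X^k)$ and $X^k\to X$. After discarding zero normals, we may assume $\operatorname{rank}(X^k)=\kappa$ and $(U_1^k)^TY^k=0$, $Y^kV_1^k=0$, where $U_1^k,V_1^k$ now have $\kappa$ columns.
\begin{itemize}
\item The range of $Y^k$ lies in the orthogonal complement of $\operatorname{span}(U_1^k)$, which has dimension $p-\kappa$. Likewise the row space of $Y^k$ has dimension at most $q-\kappa$. Hence $\operatorname{rank}Y^k\le k_{\max}-\kappa$, and lower semicontinuity of rank gives $\operatorname{rank}Y\le k_{\max}-\kappa$.
\item Among the $\kappa$ leading singular values of $X^k$, the top $s$ converge to $\sigma_1,\dots,\sigma_s>0$. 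The remaining singular values tend to $0$, so there is a spectral gap.
\item By a Wedin/Davis--Kahan perturbation argument, the span of the corresponding $s$ leading left singular vectors converges to $\operatorname{span}(U_1)$, and similarly for the right singular vectors and $\operatorname{span}(V_1)$.
\item Since $Y^k$ annihilates these subspaces, passing to the limit yields $U_1^TY=0$ and $YV_1=0$.
\end{itemize}

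\emph{Step 3 ($\supset$).} Fix $Y$ with $U_1^TY=0$, $YV_1=0$ and $r:=\operatorname{rank}Y\le k_{\max}-\kappa$. I look for $\kappa-s$ orthonormal vectors $u_j$ orthogonal to $\operatorname{span}(U_1)+\operatorname{range}(Y)$, and $\kappa-s$ orthonormal vectors $v_j$ orthogonal to $\operatorname{span}(V_1)+\operatorname{range}(Y^T)$.
\begin{itemize}
\item The available dimension on the left is at least $p-s-r\ge p-s-k_{\max}+\kappa\ge\kappa-s$, and analogously on the right, so these vectors exist.
\item Set $X^k:=X+\tfrac1k\sum_j u_jv_j^T$. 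Then $X^k$ has rank $\kappa$ and $X^k\to X$.
\item The left singular space of $X^k$ is $\operatorname{span}(U_1,u_j)$, which $Y$ annihilates from the left. The right singular space is $\operatorname{span}(V_1,v_j)$, which $Y$ annihilates from the right.
\item By Step 1, $Y\in\mathcal N_{D_\kappa}(X^k)$ for every $k$, so taking the constant sequence $Y^k=Y$ gives $Y\in\mathcal N^{\lim}_{D_\kappa}(X)$.
\end{itemize}

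The main obstacle is the subspace-convergence argument in Step 2. The singular vectors of $X^k$ need not converge individually, and the singular values of $X^k$ beyond index $s$ collapse to $0$. The argument must therefore be carried out with orthogonal projectors onto the leading-$s$ singular subspaces, using the positive gap $\sigma_s>0$. This is where I would rely on the standard Wedin $\sin\Theta$ bound.
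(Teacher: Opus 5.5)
Your proof is correct. The paper gives no argument of its own for this proposition; it simply imports the formula from Luke's Theorem~3.1. Your two-stage derivation is therefore a self-contained substitute rather than a variant of anything in the text. It runs as follows: the regular normals are the polar of the tangent cone, with $\mathcal N_{D_\kappa}(X)=\{0\}$ when $\operatorname{rank}X<\kappa$ and $\mathcal N_{D_\kappa}(X)=\{Y \,|\, U_1^TY=0,\ YV_1=0\}$ when $\operatorname{rank}X=\kappa$; then you take the outer limit. The derivation also shows where each part of the formula comes from. The rank bound $k_{\max}-\kappa$ is inherited from the rank-$\kappa$ points, where all nonzero regular normals live. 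Your explicit perturbation $X+\tfrac1k\sum_j u_jv_j^T$ in Step~3, with the count $p-s-r\ge\kappa-s$ (using $k_{\max}\le p$, and symmetrically $k_{\max}\le q$), shows that every $Y$ satisfying the three conditions is actually attained.

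A standard alternative is to work with proximal normals $t\,(Z-P_{D_\kappa}(Z))$. By the Eckart--Young truncated SVD these are exactly the discarded singular triplets of $Z$, so orthogonality and the rank bound can be read off without computing tangent cones. Your route instead produces the regular normal cone explicitly, which is the more informative by-product.

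The step you single out as the main obstacle needs neither Wedin's $\sin\Theta$ theorem nor a spectral gap. Let $Z=U_1\Lambda_1V_1^T$ be a compact SVD, so $\Lambda_1$ is invertible. Then $U_1^TY=0\iff Z^TY=0$, by multiplying $Z^TY$ on the left by $\Lambda_1^{-1}V_1^T$. Likewise $YV_1=0\iff YZ^T=0$, by multiplying on the right by $U_1\Lambda_1^{-1}$. Hence every regular normal $Y^k\in\mathcal N_{D_\kappa}(X^k)$, including the zero ones, satisfies $(X^k)^TY^k=0$ and $Y^k(X^k)^T=0$. These bilinear identities pass to the limit by continuity, giving $X^TY=0$ and $YX^T=0$, that is, $U_1^TY=0$ and $YV_1=0$. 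Combined with $\operatorname{rank}Y^k\le k_{\max}-\kappa$ and lower semicontinuity of the rank, this finishes Step~2 in two lines. It also removes the need to discard zero normals or to track the singular values of $X^k$ that collapse to $0$.
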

Based on \Cref{Prop:lrlcn}, we introduce the projective mapping $\mathcal Q$, the corresponding verification of satisfying \Cref{Ass:Q} is provided in Appendix \ref{App:1}.
\begin{lemma}\label{Lem:Qlowrank}
    Let $D$ be set defined in \eqref{Eq:lowrankset} and $X \in D$, assume the notation in \Cref{Prop:lrlcn}. The projective mapping $\mathcal Q (X): \mathbb R^{p\times q} \to \mathbb R^{p \times q}$ associated with \eqref{Eq:lowrankset} is defined as 
    \begin{equation}\label{Eq:Qlowrank}
        \mathcal Q(X)[Y]= U_1U_1^TY+YV_1V_1^T \quad \forall Y\in \mathbb R^{p\times q}.
    \end{equation}
    Then, the operator $\mathcal Q(X)$ satisfies \Cref{Ass:Q}.
\end{lemma}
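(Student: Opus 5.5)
We check the three requirements of \Cref{Ass:Q} in reverse order of difficulty: first symmetry and positive semidefiniteness, then the null space identity, and finally smoothness.

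\emph{Symmetry and positive semidefiniteness.} With respect to the Frobenius inner product, write $P:=U_1U_1^T$ and $R:=V_1V_1^T$. Both are orthogonal projectors. For all $Y,Z$ we have
\[
\langle \mathcal Q(X)[Y],Z\rangle=\operatorname{tr}(Y^TPZ)+\operatorname{tr}(RY^TZ),
\]
and this expression is symmetric in $Y$ and $Z$ because $P$ and $R$ are symmetric. Moreover,
\[
\langle \mathcal Q(X)[Y],Y\rangle=\|PY\|_F^2+\|YR\|_F^2\ge 0.
\]
So (b) holds.

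\emph{Null space identity.} From the last display, $\mathcal Q(X)[Y]=0$ if and only if $PY=0$ and $YR=0$. This is equivalent to $U_1^TY=0$ and $YV_1=0$, i.e.\ $Y=U_2MV_2^T$ for some $M\in\mathbb R^{(p-s)\times(q-s)}$. It remains to show that this subspace $\mathcal L$ equals $\operatorname{Span}(\mathcal N^{\lim}_{D_\kappa}(X))$, using \Cref{Prop:lrlcn}.
\begin{itemize}
\item The inclusion $\operatorname{Span}(\mathcal N^{\lim}_{D_\kappa}(X))\subset\mathcal L$ is immediate, since every element of the cone lies in the subspace $\mathcal L$.
\item For the reverse inclusion, note that $\mathcal L$ is spanned by the rank-one matrices $U_2e_ie_j^TV_2^T$. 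Each of these satisfies the orthogonality conditions and has rank $1\le k_{\max}-\kappa$, because $\kappa\le\min(p,q)-1$. Hence each generator lies in the normal cone, and so $\mathcal L$ is contained in its span.
\end{itemize}
This gives (c).

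\emph{Smoothness (the main obstacle).} Requirement (a) asks for continuous differentiability of $X\mapsto\mathcal Q(X)$. The obstacle is that $U_1,V_1$ are taken from the SVD with $s=\operatorname{rank}(X)$, and $s$ jumps as $X$ varies. The projectors onto the column and row spaces of $X$ are therefore discontinuous across rank strata; near a rank-deficient point, $U_1U_1^T$ is not even continuous. So (a) cannot hold globally for the literal formula.

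My plan is to interpret the construction the way the paper effectively uses it. There are two options.
\begin{itemize}
\item The first option is to restrict to a neighbourhood where the rank is locally constant. On the fixed-rank manifold, $P(X)=X X^{+}$ and $R(X)=X^{+}X$ are real-analytic, by smoothness of the Moore--Penrose inverse on constant-rank matrices. Consequently (a) holds along the relevant sets, i.e.\ on the rank-$s$ stratum.
\item The second option is to note that only (b) and (c) are pointwise conditions. A smooth surrogate, for instance $\mathcal Q(X)[Y]=XX^TY+YX^TX$, has exactly the same null space, since $\operatorname{Null}(XX^T)=\operatorname{Null}(U_1^T)$. It is polynomial, hence $C^1$ everywhere, and it inherits (b) and (c) by the same computation. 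This surrogate plays the role of the $\operatorname{diag}(x_i^2)$ choice in the sparse case.
\end{itemize}
I would present the null-space and PSD verification for \eqref{Eq:Qlowrank} as above, handle (a) via the fixed-rank smoothness of $XX^{+}$, and remark on the polynomial variant for global smoothness.
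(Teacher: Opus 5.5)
\paragraph{Parts (b) and (c).} Here you follow the paper's proof. You use the same quadratic form $\|U_1^TY\|_F^2+\|YV_1\|_F^2$ and the same two inclusions for the null space. Your rank-one step is an improvement. The paper passes from $U_1^TY=0$, $YV_1=0$ directly to $Y\in\operatorname{Span}(\mathcal N^{\mathrm{lim}}_{D_\kappa}(X))$. That ignores the bound $\operatorname{rank}(Y)\le k_{\max}-\kappa$ in \Cref{Prop:lrlcn}, which a matrix $U_2MV_2^T$ can violate when $s<\kappa$. Writing $\mathcal L$ as the span of the rank-one generators $U_2e_ie_j^TV_2^T$, which lie in the cone because $\kappa\le k_{\max}-1$, is exactly what closes this.

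\paragraph{Part (a).} The paper only says that $\mathcal Q(X)$ is linear and hence continuously differentiable. That checks smoothness in $Y$. Your objection is correct: \Cref{Eq:validA} needs continuous differentiability of $X\mapsto\mathcal Q(X)$, both for $\mathcal A\in C^1$ and for $\mathcal R_{\mathcal A}=-\mathcal DJ[G]$. The formula \eqref{Eq:Qlowrank} is not even continuous, even relative to $D$. Take unit vectors $u,v$ and $X_t:=t\,uv^T\in D$. Then $\mathcal Q(X_t)[Y]=uu^TY+Yvv^T$ for $t\neq 0$, whereas $\mathcal Q(0)=0$. So in the sense that matters the statement is false, and the paper's argument for (a) does not touch the issue.

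\paragraph{Where your plan falls short.} Your stated plan is to ``handle (a) via the fixed-rank smoothness of $XX^{+}$,'' and this does not establish (a). Analyticity on a rank stratum is not continuous differentiability on $\mathbb R^{p\times q}$. Every neighbourhood of a rank-deficient matrix contains matrices of larger rank. Even inside $D$, every point of rank $s<\kappa$ is a limit of rank-$\kappa$ points. Differentiability across strata is exactly what $\nabla\mathcal A$ needs, and so does the continuity of $\nabla\mathcal A$ used in \Cref{Th:con} at accumulation points where the rank drops.

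\paragraph{The genuine repair.} Your second option is the real fix. $\mathcal Q(X)[Y]=XX^TY+YX^TX$ is polynomial in $X$, and $\langle\mathcal Q(X)[Y],Y\rangle=\|X^TY\|_F^2+\|YX^T\|_F^2$. Moreover $X^TY=0\iff U_1^TY=0$ and $YX^T=0\iff YV_1=0$, so your (b)--(c) argument transfers verbatim. But that proves a corrected lemma for a different $\mathcal Q$. You should say plainly which of two things you are doing, rather than presenting the hybrid:
\begin{enumerate}
\item[(i)] Accept the paper's weak reading of (a), smoothness in $Y$. Then (a) is trivial and your proof coincides with the paper's, with a better (c).
\item[(ii)] Replace \eqref{Eq:Qlowrank} by the polynomial version, for which (a) holds genuinely.
\end{enumerate}
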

\begin{proof}
    For any given $X$, suppose that $X=U\Lambda V^T$ is the corresponding singular value decomposition, where $U=(U_1, U_2)$, $V=(V_1, V_2)$ with $\Lambda \in \mathbb R^{p\times q}$ diagonal, $U_1 \in \mathbb R^{p \times s}$,  $U_2 \in \mathbb R^{p \times (p-s)}$, $V_1 \in \mathbb R^{q \times s}$, and $V_2 \in \mathbb R^{q \times (q-s)}$. For any $Y\in \mathbb R^{p\times q}$,
\begin{equation*}
    \mathcal Q(X)[Y]:=U_1U_1^TY+YV_1V_1^T.
\end{equation*}
The mapping $\mathcal Q(X)$ is linear and hence continuously differentiable. Let us next consider $\mathcal Q(X)$ is symmetric and positive semidefinite.

For any $Y_1, Y_2\in \mathbb R^{p\times q}$, we have
\begin{equation*}
    \begin{aligned}
        \left< Y_1, \mathcal Q(X)[Y_2] \right>&= \left< Y_1, U_1U_1^TY_2\right>+\left< Y_1, Y_2V_1V_1^T\right>\\
        & = \left< U_1U_1^TY_1, Y_2 \right>+\left< Y_1V_1V_1^T, Y_2\right>\\
        & = \left < \mathcal Q(X)[Y_1], Y_2 \right>,
    \end{aligned}
\end{equation*}
which means that $\mathcal Q(X)$ is symmetric. For any $Y\in \mathbb R^{p\times q}$, then
\begin{equation*}
    \begin{aligned}
        \left< Y, \mathcal Q(X)[Y] \right>&=\left< Y, U_1U_1^TY+YV_1V_1^T \right>\\
        & = \left\|U_1^TY\right\|_F^2+ \left\| YV_1\right\|_F^2\\
        & \geq 0.
    \end{aligned}
\end{equation*}
Therefore, $\mathcal Q(X)$ a is self-adjoint positive semidefinite operator. It remains to verify \Cref{Ass:Q} \ref{item:Qc}, i.e., $\text{Null} \left( \mathcal Q(X)\right ) = \text{Span} \left( \mathcal N^{\text{lim}}_{D_{\kappa}}(X)\right)$. 

For any $Y\in \text{Null} \left( \mathcal Q(X)\right )$, we have $U_1U_1^TY+YV_1V_1^T=0$. Left-multiplying by $U_1^T$ implies $U_1^TU_1U_1^TY+U_1^TYV_1V_1^T=0$. Since $U_1^TU_1=I$, we have 
\begin{equation}\label{Eq:app1}
U_1^TY+U_1^TYV_1V_1^T=0.
\end{equation}
Right-multiplying by $V_1$ yields 
$U_1^TYV_1+U_1^TYV_1V_1^TV_1=0$. Since $V_1^TV_1=I$, we have $U_1^TYV_1=0$. Then \eqref{Eq:app1} implies
$U_1^TY=0$. Similarly, we also have $YV_1=0$. Hence, by \Cref{Prop:lrlcn}, it follows that $Y \in \text{Span}\left( \mathcal N^{\text{lim}}_{D_{\kappa}}(X)\right)$. Therefore, $\text{Null} \left( \mathcal Q(X)\right ) \subset \text{Span}\left( \mathcal N^{\text{lim}}_{D_{\kappa}}(X)\right) $ holds. 

For any $Y\in \text{Span}\left( \mathcal N^{\text{lim}}_{D_{\kappa}}(X)\right)$, then there exists finitely many $\alpha_i \in \mathbb R$ and $Y_i 
\in \mathcal N^{\text{lim}}_{D_{\kappa}}(X)$ such that
$Y=\sum_{i}\alpha_i Y_i$, where we omit the index set for simplicity since it is finite and does not affect the argument. By \Cref{Prop:lrlcn}, $Y_i$ satisfies $U_1^TY_i=0$ and $Y_i V_1=0$ for any $i$. Then,
\begin{equation*}
    \mathcal Q(X)[Y]=U_1U_1^TY+YV_1V_1^T=\sum_i \alpha_i U_1U_1^TY_i+\alpha_i Y_iV_1V_1^T=0,
\end{equation*}
which implies that $Y\in \text{Null} \left( \mathcal Q(X)\right )$. Therefore, $\text{Span}\left( \mathcal N^{\text{lim}}_{D_{\kappa}}(X)\right) \subset \text{Null} \left( \mathcal Q(X)\right )$ holds. Overall, we have $\text{Span}\left( \mathcal N^{\text{lim}}_{D_{\kappa}}(X)\right) = \text{Null} \left( \mathcal Q(X)\right )$ holds. This completes the proof.
\end{proof}

Furthermore, if $X$ is symmetric ($p$=$q$), we denote the corresponding set as 
\begin{equation*}
    D^{s}_{\kappa}:=\{X\in \mathbb S^{p} \,\big|\, \text{rank} (X) \leq \kappa \}.
\end{equation*}
Let $\text{rank} (X)=S$ and $X=U\Lambda U^T$ be the corresponding singular value decomposition, where $U=[U_1, U_2]$ and $U_1\in \mathbb R^{p\times s}, U_2\in \mathbb R^{p \times (p-s)}$, and $\Lambda$ is diagonal.

By \cite[Proposition~3.4]{tam2017regularity}, the corresponding limiting normal cone is
\begin{equation*}
    \mathcal N_{D^s_{\kappa}}^{\text{lim}}(X):=\left\{ Y\in \mathbb S^{p} \,\big|\, U_1^TY=0\right\}.
\end{equation*}
Accordingly, the projective mapping $\mathcal Q: \mathbb S^p \to \mathbb S^p$ can be defined as
\begin{equation*}
    \mathcal Q(X)[Y]=U_1U_1^TY+YU_1U_1^T \quad \forall Y\in \mathbb S^p.
\end{equation*}
By an argument analogous to that in Appendix~\ref{App:1}, one can verify that $\mathcal Q(X)$ satisfies \Cref{Ass:Q}.

\subsubsection{Positive semidefinite low-rank constrained sets}
In many applications arising in data science, a positive semidefiniteness constraint is commonly imposed. In this case, the constrained set is defined as
\begin{equation}\label{Eq:spdlr}
    D^{\text{psd}}_{\kappa}:=\{X\in \mathbb S^p \,|\, X \succeq 0, \text{rank} (X) \leq \kappa \}.
\end{equation}
The corresponding limiting normal cone has been investigated in \cite[Theorem~3.12]{tam2017regularity}. We now provide an equivalent representation.

\begin{proposition}\label{Prop:psdlrlnc}
      Let $D$ be a set defined in \eqref{Eq:spdlr}. For a given $X\in D$ with $\text{rank}(X)=s$. Let $X=U\Lambda U^T$ be the singular value decomposition of $X$, where $U=(U_1, U_2)$ with $\Lambda \in \mathbb R^{p\times p}$ diagonal, $U_1 \in \mathbb R^{p \times s}$,  $U_2 \in \mathbb R^{p \times (p-s)}$. Then the limiting normal cone onto \eqref{Eq:lowrankset} at $X$ is
    \begin{equation*}
        \mathcal N_{D^{\text{psd}}_{\kappa}}^{\text{lim}}(X):=\left\{ Y\in \mathbb S^{p} \,\big|\, U_1^TY=0, \text{rank} (Y) \leq p -\kappa\right\} \cup \left\{ Y\in \mathbb S^{p} \,\big|\, U_1^TY=0, Y \preceq 0 \right\}  ,
    \end{equation*}
\end{proposition}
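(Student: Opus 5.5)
The plan is to take Tam's characterization \cite[Theorem~3.12]{tam2017regularity} as given and show that it can be rewritten in the stated form. Recall that Tam describes $\mathcal N_{D^{\text{psd}}_\kappa}^{\text{lim}}(X)$ through the eigenvalue decomposition of $X$. A matrix $Y\in\mathbb S^p$ is a limiting normal at $X$ if and only if it is supported on the kernel of $X$, i.e.\ $Y=U_2 W U_2^T$ for some $W\in\mathbb S^{p-s}$, and moreover $W$ satisfies one of two alternatives. Either $W$ has at most $p-\kappa$ nonzero eigenvalues (the low-rank part), or $W\preceq 0$ (the cone part, coming from the PSD constraint). Since $X\succeq0$, its SVD and its spectral decomposition coincide, so $U_1$ spans $\text{Range}(X)$ and $U_2$ spans $\text{Null}(X)$.

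First I translate the support condition. For $Y\in\mathbb S^p$, the identity $U_1^TY=0$ is equivalent to $Y=U_2U_2^TYU_2U_2^T$, which in turn is equivalent to $Y=U_2WU_2^T$ with $W:=U_2^TYU_2$. For the forward direction, write $Y=UU^TYUU^T$ and note that every block involving $U_1$ vanishes; for the backward direction use $U_1^TU_2=0$. Symmetry of $Y$ gives $YU_1=0$ for free.

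Next I translate the two alternatives. Because $U_2$ has orthonormal columns, $\text{rank}(U_2WU_2^T)=\text{rank}(W)$, so the eigenvalue-count condition on $W$ is exactly $\text{rank}(Y)\le p-\kappa$. Likewise, $U_2WU_2^T\preceq0$ holds if and only if $W\preceq0$: test against $z=U_2u$ in one direction, and decompose a general $z$ along $U_1$ and $U_2$ in the other. Taking the union of the two sets then gives the displayed formula.

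I expect the main obstacle to be bookkeeping against Tam's precise statement. It may use a different parametrization: the rank threshold could be phrased via $\kappa-s$ inside the $(p-s)$-dimensional block, or the case $s=\kappa$ could be treated separately. I would carefully check that a rank bound of the form $(p-s)-(\kappa-s)=p-\kappa$ is what appears, and handle the boundary cases $s=\kappa$ and $s<\kappa$ explicitly. In the case $s<\kappa$, I would also confirm that the union form covers the situation where Tam's formula merges the two pieces.
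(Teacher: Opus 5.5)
Your proposal is correct and follows the same route as the paper, which gives no independent argument and only presents the formula as an equivalent rewriting of \cite[Theorem~3.12]{tam2017regularity}; your translation steps ($U_1^TY=0\iff Y=U_2WU_2^T$ with $W=U_2^TYU_2$, $\operatorname{rank}(U_2WU_2^T)=\operatorname{rank}(W)$, and $U_2WU_2^T\preceq 0\iff W\preceq 0$) are correct and are exactly what that rewriting needs. When you check Tam's statement, note that if it is given in Lewis' spectral form $\{U\operatorname{diag}(v)U^T \,|\, v\in\mathcal N^{\text{lim}}_{\mathcal R_\kappa}(\lambda(X)),\ X=U\operatorname{diag}(\lambda(X))U^T\}$, with $\lambda(X)$ the ordered eigenvalue vector, then your recalled $W$-form silently assumes a simultaneous-diagonalization step, and you should add it: write $W=Q\operatorname{diag}(w)Q^T$, use the frame $(U_1,U_2Q)$, which diagonalizes $X$ and $Y$ at once, and go the other way via $Yu_i=v_iu_i=0$ on $\operatorname{Range}(X)$.
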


Since $\text{Span} \left(  \mathcal N_{D^{\text{psd}}_{\kappa}}^{\text{lim}}(X)\right)=\left\{ Y\in \mathbb S^{p} \,\big|\, U_1^TY=0\right\}$, the operator $\mathcal Q(X): \mathbb S^p \to \mathbb S^p$ satisfying \Cref{Ass:Q} can be defined as
\begin{equation}\label{Eq:PositlowrankQ}
    \mathcal Q(X)[Y]=U_1U_1^TY+YU_1U_1^T \quad \forall Y\in \mathbb S^p.
\end{equation}

 \bibliographystyle{tfs}
\bibliography{bibliography}

@article{birgin2000nonmonotone,
  title={Nonmonotone spectral projected gradient methods on convex sets},
  author={Birgin, Ernesto G and Mart{\'\i}nez, Jos{\'e} Mario and Raydan, Marcos},
  journal={SIAM Journal on Optimization},
  volume={10},
  number={4},
  pages={1196--1211},
  year={2000},
  publisher={SIAM}
}

@article{xiao2024dissolving,
  title={Dissolving constraints for {R}iemannian optimization},
  author={Xiao, Nachuan and Liu, Xin and Toh, Kim-Chuan},
  journal={Mathematics of Operations Research},
  volume={49},
  number={1},
  pages={366--397},
  year={2024},
  publisher={INFORMS},
  doi={10.1287/moor.2023.1360}
}

@article{xiao2024solving,
  title={Solving optimization problems over the Stiefel manifold by smooth exact penalty functions},
  author={Xiao, Nachuan and Liu, Xin},
  journal={Journal of Computational Mathematics},
  volume={42},
  number={5},
  pages={1246--1276},
  year={2024}
}

@article{xiao2026exact,
  title={An exact penalty approach for equality constrained optimization over a convex set: N. {X}iao et al.},
  author={Xiao, Nachuan and Tang, Tianyun and Wang, Shiwei and Toh, Kim-Chuan},
  journal={Mathematical Programming},
  pages={1--53},
  year={2026},
  publisher={Springer}
}

@article{basir2022physics,
  title={Physics and equality constrained artificial neural networks: Application to forward and inverse problems with multi-fidelity data fusion},
  author={Basir, Shamsulhaq and Senocak, Inanc},
  journal={Journal of Computational Physics},
  volume={463},
  pages={111301},
  year={2022},
  publisher={Elsevier},
  doi={10.1016/j.jcp.2022.111301}
}

@article{de2023constrained,
  title={Constrained composite optimization and augmented {L}agrangian methods},
  author={De Marchi, Alberto and Jia, Xiaoxi and Kanzow, Christian and Mehlitz, Patrick},
  journal={Mathematical Programming},
  volume={201},
  number={1},
  pages={863--896},
  year={2023},
  publisher={Springer},
doi={10.1007/s10107-022-01922-4}
}

@article{henrion2002calmness,
  title={On the calmness of a class of multifunctions},
  author={Henrion, Ren{\'e} and Jourani, Abderrahim and Outrata, Jiri},
  journal={SIAM Journal on Optimization},
  volume={13},
  number={2},
  pages={603--618},
  year={2002},
  publisher={SIAM},
  doi={10.1137/S1052623401395553}
}

@article{higham2002computing,
  title={Computing the nearest correlation matrix—a problem from finance},
  author={Higham, Nicholas J},
  journal={IMA journal of Numerical Analysis},
  volume={22},
  number={3},
  pages={329--343},
  year={2002},
  publisher={OUP},
  doi={10.1093/imanum/22.3.329}
}

@article{hu2024constraint,
  title={A constraint dissolving approach for nonsmooth optimization over the Stiefel manifold},
  author={Hu, Xiaoyin and Xiao, Nachuan and Liu, Xin and Toh, Kim-Chuan},
  journal={IMA Journal of Numerical Analysis},
  volume={44},
  number={6},
  pages={3717--3748},
  year={2024},
  publisher={Oxford University Press},
  doi={10.1093/imanum/drad098}
}

@article{jia2023augmented,
  title={An augmented {L}agrangian method for optimization problems with structured geometric constraints},
  author={Jia, Xiaoxi and Kanzow, Christian and Mehlitz, Patrick and Wachsmuth, Gerd},
  journal={Mathematical Programming},
  volume={199},
  number={1},
  pages={1365--1415},
  year={2023},
  publisher={Springer},
  doi={10.1007/s10107-022-01870-z}
}

@book{markovsky2012low,
  title={Low rank approximation},
  author={Markovsky, Ivan and Usevich, Konstantin},
  volume={139},
  year={2012},
  publisher={Springer},
  doi={10.1007/978-3-319-89620-5}
}

@article{mehlitz2020asymptotic,
  title={Asymptotic stationarity and regularity for nonsmooth optimization problems},
  author={Mehlitz, Patrick},
  journal={Journal of Nonsmooth Analysis and Optimization},
  volume={1},
  number={Original research articles},
  year={2020},
  publisher={Episciences. org},
  doi={10.46298/jnsao-2020-6575}
}

@article{frangioni2007sdp,
  title={{SDP} diagonalizations and perspective cuts for a class of nonseparable {MIQP}},
  author={Frangioni, Antonio and Gentile, Claudio},
  journal={Operations Research Letters},
  volume={35},
  number={2},
  pages={181--185},
  year={2007},
  publisher={Elsevier},
  doi={10.1016/j.orl.2006.03.008}
}

@article{tam2017regularity,
  title={Regularity properties of non-negative sparsity sets},
  author={Tam, Matthew K},
  journal={Journal of Mathematical Analysis and Applications},
  volume={447},
  number={2},
  pages={758--777},
  year={2017},
  publisher={Elsevier}
}

@article{luke2013prox,
  title={Prox-regularity of rank constraint sets and implications for algorithms},
  author={Luke, D Russell},
  journal={Journal of Mathematical Imaging and Vision},
  volume={47},
  number={3},
  pages={231--238},
  year={2013},
  publisher={Springer}
}

@article{harder2021reformulation,
  title={Reformulation of the {M}-stationarity conditions as a system of discontinuous equations and its solution by a semismooth {N}ewton method},
  author={Harder, Felix and Mehlitz, Patrick and Wachsmuth, Gerd},
  journal={SIAM Journal on Optimization},
  volume={31},
  number={2},
  pages={1459--1488},
  year={2021},
  publisher={SIAM}
}

@misc{gurobi,
  author = {{Gurobi Optimization, LLC}},
  title = {{Gurobi Optimizer Reference Manual}},
  year = 2024,
  url = "https://www.gurobi.com"
}

@article{kanzow2023inexact,
  title={Inexact penalty decomposition methods for optimization problems with geometric constraints},
  author={Kanzow, Christian and Lapucci, Matteo},
  journal={Computational Optimization and Applications},
  volume={85},
  number={3},
  pages={937--971},
  year={2023},
  publisher={Springer}
}

@article{qi2006quadratically,
  title={A quadratically convergent Newton method for computing the nearest correlation matrix},
  author={Qi, Houduo and Sun, Defeng},
  journal={SIAM journal on matrix analysis and applications},
  volume={28},
  number={2},
  pages={360--385},
  year={2006},
  publisher={SIAM},
  doi={10.1137/050624509}
}

@book{rockafellar1998variational,
  title={Variational analysis},
  author={Rockafellar, R Tyrrell and Wets, Roger JB},
  year={1998},
  publisher={Springer}
}

@article{wu2002fast,
  title={Fast at-the-money calibration of the {LIBOR} market model through {L}agrange multipliers},
  author={Wu, Lixin},
  journal={Journal of Computational Finance},
  volume={6},
  number={2},
  pages={39--77},
  year={2002},
  url={https://ideas.repec.org/a/rsk/journ0/2160494.html}
}

@article{zhang2025novel,
  title={A Novel Nonconvex Penalty Method for a Rank Constrained Matrix Optimization Problem and its Applications},
  author={Zhang, Wenjuan and Yao, Jiayi and Xiao, Feng and Wang, Yuping and Wu, Yulian},
  journal={International Journal of Applied Mathematics and Computer Science},
  volume={35},
  number={1},
  pages={157--177},
  year={2025},
  publisher={University of Zielona G{\'o}ra},
  doi={10.61822/amcs-2025-0012}
}
\end{document}